\documentclass[12pt, openany, letterpaper]{amsart}
\usepackage[margin=1.2in]{geometry}
\usepackage{amsmath, amssymb, amsthm, amsfonts}
\usepackage{enumitem}
\usepackage{hyperref}
\usepackage{cleveref}
\usepackage{booktabs}
\usepackage{mathrsfs}
\usepackage{tikz-cd}

\theoremstyle{plain}

\newtheorem*{theorem*}{Theorem}
\newtheorem{theorem}{Theorem}[section]
\newtheorem{proposition}[theorem]{Proposition}
\newtheorem{lemma}[theorem]{Lemma}
\newtheorem{corollary}[theorem]{Corollary}

\theoremstyle{definition}
\newtheorem{definition}[theorem]{Definition}
\newtheorem{example}[theorem]{Example}

\theoremstyle{remark}
\newtheorem{remark}[theorem]{Remark}
\newtheorem{notation}[theorem]{Notation}

\newcommand{\F}{\mathbb{F}}

\newcommand{\Pp}{\mathbb{P}}

\newcommand{\OO}{\mathcal{O}}
\newcommand{\LL}{\mathcal{L}}
\newcommand{\MM}{\mathcal{M}}
\newcommand{\BB}{\mathcal{B}}
\newcommand{\FF}{\mathcal{F}}
\newcommand{\GG}{\mathcal{G}}

\newcommand{\two}{ \mathbf{I\!I}}
\newcommand{\one}{  \mathbf{I}}

\newcommand{\f}{\mathfrak{f}}
\newcommand{\p}{\mathfrak{p}}

\newcommand{\prI}{\operatorname{Pr}_\one}
\newcommand{\prII}{\operatorname{Pr}_{\two}}
\newcommand{\m}{\mathfrak{m}}

\newcommand{\OmegaOne}{\Omega^1}
\newcommand{\omegaA}{\Omega_A^1}
\newcommand{\Weil}{\operatorname{Weil}}
\newcommand{\Cinf}{\mathbb{C}_\infty}

\newcommand{\res}{\operatorname{res}}
\newcommand{\Res}{\operatorname{Res}}
\newcommand{\Tr}{\operatorname{Tr}}
\newcommand{\pair}[2]{\langle #1, #2 \rangle}

\newcommand{\PP}{\operatorname{PP}}
\newcommand{\tPP}{\widetilde{\operatorname{PP}}}
\newcommand{\Hom}{\operatorname{Hom}}
\newcommand{\End}{\operatorname{End}}

\newcommand{\CasFin}{\operatorname{Cas}_{\f}^{\text{fin}}}
 
\newcommand{\WeilOp}{\operatorname{O}_{\omega,\f}^{[2]}}

\newcommand{\WeilOpr}{\operatorname{O}_{\omega,\f}^{[r]}}

\newcommand{\id}{\operatorname{id}}

\newcommand{\Df}{D_{\f}}

\newcommand{\supp}[1]{|#1|}

\newcommand{\DeltaDiag}{\Delta}

\newcommand{\UpsilonFrame}{\Upsilon_e}

\newcommand{\inc}{\operatorname{inc}}

\begin{document}
\title{Geometric Realization of Finite Residue Casimirs and Weil Operators via Szeg\H{o} Kernels}

\author{Chuangqiang Hu}
\address{School of Mathematics, Sun Yat-Sen University, Guangzhou, 510275,
P. R. China}
\email{\href{huchq5@mail.sysu.edu.cn}{huchq5@mail.sysu.edu.cn}}

\author{Lishan Yu}
\address{Beijing Institute of Mathematical Sciences and Applications, Beijing, 101408, P. R. China}
\email{\href{lishany@bimsa.cn}{lishany@bimsa.cn}}

\begin{abstract}
For a smooth projective curve over a finite field with a fixed point at infinity, we establish a universal correspondence between finite residue duality and geometric kernel functions. We prove that the finite residue Casimir tensor is realized geometrically as the intrinsic principal part of the normalized Szeg\H{o} kernel for any acyclic line bundle, and equivalently that this kernel acts as a reproducing kernel for the finite residue pairing, in exact analogy with the classical Cauchy integral formula. In the polynomial case these equivalent descriptions yield a closed formula involving the rank-two Weil operator, identified with the classical divided difference, recovering a remainder identity of Hu--Ou. These results provide a geometric foundation for the study of Anderson generating functions and the Weil pairing for Drinfeld modules.
\end{abstract}
\keywords{Szeg\H{o} kernel; Casimir; Weil Pairing; Drinfeld module; Anderson generating function}
\subjclass[2020]{Primary \textrm{11G09}; Secondary \textrm{14F17}, \textrm{14R58}}

\maketitle

\tableofcontents

\vspace{1em}

\section{Introduction}
\label{sec:introduction}
The Weil pairing for Drinfeld modules, introduced in Drinfeld's foundational theory of elliptic modules \cite{DVG74}, is a cornerstone of function-field arithmetic, serving as the natural analogue of the classical Weil pairing on elliptic curves and abelian varieties; a general background on Drinfeld modules may be found in \cite{Pap23, PB21, Tha04}. Unlike its elliptic-curve counterpart, which is a bilinear alternating form taking values in roots of unity, the Weil pairing for Drinfeld modules is a multilinear alternating map
\[
\Weil_{\f} : \prod_{i=1}^r \phi[\f] \longrightarrow \left(\wedge^r \phi\right)[\f],
\]
where $\phi$ is a rank-$r$ Drinfeld $\F_q[t]$-module and $\f$ is an ideal of definition. The existence of this pairing was first established by van der Heiden \cite{vdHGJ04} using the deep machinery of Anderson $t$-motives \cite{AGW86}. Subsequently, Papikian \cite{Pap23} obtained an explicit formula for the rank-two Weil pairing without invoking Anderson $t$-motives. Though elementary, his treatment is mainly computational and does not provide a geometric motivation for the existence of the Weil operator. Katen \cite{KJ21} later extended Papikian's formula to arbitrary ranks, revealing a recursive structure for the higher-rank Weil operators $\WeilOpr$.

A central insight into the nature of these operators came from the work of Hu and Ou-Yang \cite{HuOuY26}, who established a fundamental connection between Weil operators and the $f$-remainder of Anderson generating functions. Related algebraic identities for Anderson generating functions had previously been studied by El-Guindy and Papanikolas \cite{ElP14}, and their arithmetic significance for Drinfeld torsion extensions was further explored by Maurischat and Perkins \cite{MP22}. A key lemma in their work states that the $f$-remainder of the simple rational function $1/(\theta - t)$ is precisely
\begin{equation}\label{eq:intro}
\left[\frac{1}{\theta - t}\right]_{\f} = \frac{1}{f(\theta)} \cdot \frac{f(\theta) - f(t)}{\theta - t} = \frac{\WeilOp(\theta, t)}{f(\theta)}.
\end{equation}
This identity demonstrates that the rank-two Weil operator is not an ad hoc combinatorial construction, but rather the polynomial kernel that governs the reduction of Cauchy-type kernels modulo the ideal $\f$. From this perspective, the Weil operator emerges naturally from the duality between functions and differentials on the affine line, grounding its algebraic definition in intrinsic geometric properties of the base curve.

Over the complex numbers, the Szeg\H{o} kernel occupies a central position in the geometry of algebraic curves, intimately connected to theta functions, determinant line bundles and the representation theory of loop groups \cite{BenZviBiswas2003}. In this setting, it serves as a universal reproducing kernel whose singular behavior along the diagonal encodes the fundamental duality between functions and differentials.

By contrast, in the function-field setting, despite the deep arithmetic significance of Drinfeld modules and their torsion points, a systematic geometric theory of Szeg\H{o} kernels and their relation to residue duality has been lacking. The goal of the present work is to fill this gap by \textbf{transplanting the characteristic-zero geometric framework of Szeg\H{o} kernels into function-field arithmetic}. Drawing on the classical foundations of function-field arithmetic \cite{Goss96,Pap23,Tha04}, we show that the finite residue Casimir---the canonical element representing the identity map under the perfect residue pairing between $A/\f$ and its differential dual---is realized geometrically as the intrinsic principal part of the normalized Szeg\H{o} kernel for any acyclic line bundle along the finite divisor $\Df$. 

Our main result can be stated as follows. Let $X/\F_q$ be a smooth projective curve with a fixed closed point $\infty$, and let $A=\Gamma(X\setminus\{\infty\},\OO_X)$. For any nonzero ideal $\f\subseteq A$, we denote by $\CasFin$ the Casimir tensor, and by $\inc$ its scalar extension to $\Cinf$. Let $\LL$ be an acyclic line bundle on the base-changed curve $X_{\Cinf}$, and let $S_{\LL}$ be its normalized Szeg\H{o} kernel (see Section \ref{sec:acyclic-szego}). Denote by $\PP_D$ the intrinsic principal part map along the divisor $D=\Df\otimes_{\F_q}\Cinf$, and by $\UpsilonFrame$ the frame identification induced by any local frame $e$ of $\LL$ near the support of $D$.

\begin{theorem*}[Finite Residue--Cauchy Theorem]
With the notation above, the principal part of the normalized Szeg\H{o} kernel along $D$ realizes the scalar extension of the finite residue Casimir; that is,
\begin{equation}\label{eq:main-intro}
\UpsilonFrame\bigl(\PP_D(S_{\LL})\bigr)=\inc\bigl(\CasFin\bigr).
\end{equation}
In particular, the left-hand side of \eqref{eq:main-intro} is independent of the choice of acyclic line bundle $\LL$, local frame $e$, and local coordinates.
\end{theorem*}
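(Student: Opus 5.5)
We prove the identity by showing that both sides of \eqref{eq:main-intro} are characterized by the same reproducing property for the finite residue pairing. The Casimir $\CasFin\in (A/\f)\otimes \Omega_{A}^1/\f\Omega_A^1$ (suitably dualized) is by definition the unique tensor $\sum_i a_i\otimes \omega_i$ satisfying
\[
\sum_i a_i\,\Res_{\Df}(g\,\omega_i)=g \pmod{\f}
\]
for all $g\in A$, where $\{a_i\}$ and $\{\omega_i\}$ are dual bases under the perfect residue pairing. Its scalar extension $\inc(\CasFin)$ is characterized the same way over $\Cinf$, with $A/\f$ replaced by $\OO_D=\OO_{X_{\Cinf}}/\OO(-D)$. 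It therefore suffices to show that $\UpsilonFrame(\PP_D(S_\LL))$, viewed as an element of $\OO_D\otimes(\text{principal parts of differentials along }D)$, also reproduces every local section of $\OO_D$ under the residue pairing.

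\textbf{Step 1: the local Cauchy formula.} Choose a local frame $e$ of $\LL$ near $\supp{D}$ and write $S_\LL(x,y)=\sigma(x,y)\,e(x)\otimes e(y)^{-1}\otimes dy$. Normalization means that near the diagonal
\[
\sigma(x,y)=\frac{1}{y-x}+(\text{holomorphic}),
\]
in a local coordinate. For a local section $s$ of $\LL$ near a point $P$ of $\supp D$, the residue theorem in $y$ applied to $s(y)\,S_\LL(x,y)$ on a small disc gives
\[
\Res_{y=P}\, s(y)\sigma(x,y)\,dy \equiv s(x) \pmod{\mathfrak m_P^{n_P}}.
\]
This holds because only the diagonal pole contributes modulo the appropriate power of the maximal ideal. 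More precisely, I would expand $\sigma(x,y)$ in $y$ around $P$ with $x$ in the formal neighbourhood. The principal part $\PP_D$ in $y$ keeps exactly the terms that pair nontrivially with $\OO_D$. This yields $\sum_i \bar a_i(x)\,\Res_D(s\,\bar\omega_i)=s(x)$ in $\OO_D$, where $\UpsilonFrame(\PP_D S_\LL)=\sum_i \bar a_i\otimes\bar\omega_i$.

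\textbf{Step 2: uniqueness.} Because the residue pairing $\OO_D\times\Omega^1(D)/\Omega^1\to\Cinf$ is perfect (base change of the finite residue duality), a tensor with this reproducing property is unique. It therefore equals $\inc(\CasFin)$. Acyclicity of $\LL$ is used only to guarantee that $S_\LL$ exists and is uniquely normalized. The specific global holomorphic part of $S_\LL$ never enters, because $\PP_D$ discards it modulo $\OO(-D)$.

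\textbf{Step 3: independence.} Independence of $\LL$, $e$ and coordinates then follows immediately, since the right-hand side involves none of them. Still, I would check directly that changing the frame $e\mapsto ue$ multiplies $\sigma$ by $u(x)/u(y)$. Since $u(x)/u(y)\equiv 1$ to the needed order along the diagonal, this is absorbed by $\UpsilonFrame$.

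\textbf{Main obstacle.} The hard part is Step 1 at points of $D$ with multiplicity $n_P>1$ and with non-rational residue fields. Over $\Cinf$ the closed points of $\Df$ split into several geometric points, and the mixed terms of $\sigma(x,y)$ with $x\neq y$ near different points must be shown to vanish in the pairing. I would handle this by working in the product of the complete local rings $\prod_P \Cinf[[z_P]]/(z_P^{n_P})$. There the off-diagonal blocks of $\sigma$ are holomorphic in $y$ and so have zero residue against $\OO_D$. On the diagonal blocks, an explicit Laurent expansion of $1/(y-x)$ gives the dual basis $z^k\leftrightarrow z^{-k-1}dz$.
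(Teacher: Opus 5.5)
Your argument is correct, but it follows a different route from the paper's proof of this theorem.

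\textbf{The paper's route.} It matches explicit normal forms. Functoriality gives $\UpsilonFrame(\PP_D(S_\LL))=\PP_D(S_\LL^e)$. Exact truncation then gives the local shape $\sum_{y}\sum_{k<m_y}u_\one^k\otimes du_\two/u_\two^{k+1}$ (Lemma~\ref{lemma:pp-basic-properties}(v)). Finally, Lemma~\ref{lemma:split-casimir} shows that $\inc(\CasFin)$ is this same tensor, using trace-dual bases and the identity $\sum_l\sigma_{j'}(\beta_l)\sigma_j(\beta_l^*)=\delta_{j'j}$.

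\textbf{Your route.} You show that both sides are the identity tensor of the residue pairing, and conclude by injectivity of $W_D(\OO)\to\End(H^0(D,\OO))$ (Proposition~\ref{prop:endomorphism-interpretation}). Your Step 1 is essentially Theorem~\ref{thm:reproducing-property}. The paper mentions this alternative in the remark after Corollary~\ref{cor:endomorphism-form}, but there it still transports the characterization along $\inc$ via the split-form lemma.

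\textbf{Comparison.} Your route needs no explicit basis of $\CasFin$, and it makes independence of $\LL$, $e$ and coordinates tautological. The paper's route produces explicit normal forms that it reuses for the Weil operator and the examples.

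\textbf{Where to tighten.}
\begin{enumerate}
\item You should make explicit the transport of Lemma~\ref{lem:basis_independence}(i) from $\CasFin$ to $\inc(\CasFin)$. This requires that the $\Cinf$-bilinear extension of $\pair{\cdot}{\cdot}_\f$ equals the geometric pairing $\sum_{y\in\supp{D}}\res_y$ of your Step 1. Concretely, you need $\sum_j\res_{y_j}(\omega_{\Cinf})=\Tr_{\kappa(x)/\F_q}(\res_x\omega)$, which is Lemma~\ref{lemma:base-change-residues}. You currently only assert it parenthetically as ``base change of the finite residue duality''.
\item This base-change step is where non-rational residue fields actually matter. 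Your ``main obstacle'' paragraph places the difficulty on the Szeg\H{o} side, where it is trivial. For $x$ infinitesimally near $P'$ and $y$ near $P\neq P'$, the kernel is regular in $y$, because its only pole is along the diagonal.
\item $\CasFin$ lives in $V_\f\otimes\f^{-1}\omegaA/\omegaA$, not in $(A/\f)\otimes\omegaA/\f\omegaA$. With regular $\omega_i$, every $\Res_{\Df}(g\omega_i)$ would vanish. Your Step 2 uses the correct space $\OmegaOne(D)/\OmegaOne$, so this is only a slip in the setup.
\end{enumerate}
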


This result establishes a \textbf{universal correspondence between finite residue duality and geometric kernel functions}. It is equivalent to the following \emph{reproducing property}, which is the exact function-field analogue of the classical Cauchy integral formula: just as contour integration against the Cauchy kernel recovers a holomorphic function from its boundary values, contraction of our kernel against any local section through the residue pairing returns the restriction of that section to $D$.

\begin{theorem*}[Reproducing property]
Let $\LL$ be acyclic and let $\tilde\alpha\in H^0(U,\LL)$ be a section defined on a neighbourhood $U\supseteq\supp D$. Then
\begin{equation}\label{eq:reproducing-intro}
\Bigl(\id\otimes\bigl\langle\tilde\alpha|_D,\cdot\bigr\rangle_D\Bigr)\bigl(\PP_D(S_{\LL})\bigr)
=\sum_{y\in\supp D}\res_y^{\two}\bigl(S_{\LL}\cdot\tilde\alpha(\two)\bigr)
=\tilde\alpha|_D.
\end{equation}
In particular, $\PP_D(S_{\LL})$ is the identity tensor for the geometric residue pairing on $D$, and hence encodes exactly the same universal duality as \eqref{eq:main-intro}.
\end{theorem*}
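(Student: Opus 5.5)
The plan is to prove the reproducing property directly from the defining properties of the normalized Szeg\H{o} kernel, together with the residue theorem on the projective curve $X_{\Cinf}$. Recall the setup: $S_{\LL}$ is the unique section of $\LL\boxtimes(\LL^{-1}\otimes\Omega^1)$ on $X_{\Cinf}\times X_{\Cinf}$ minus the diagonal. It has a simple pole along the diagonal with residue $1$, meaning that near $x=y$, in a frame $e$ and coordinate $z$, it looks like $e(x)\otimes e^{-1}(y)\,dz(y)/(z(y)-z(x))$ up to a holomorphic term. Acyclicity ($H^0(\LL)=H^1(\LL)=0$) is exactly what makes $S_{\LL}$ exist and be unique.

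First I reduce the first equality in \eqref{eq:reproducing-intro} to the definitions. The residue pairing $\langle\cdot,\cdot\rangle_D$ pairs $\LL|_D$ with the principal parts of $\LL^{-1}\otimes\Omega^1$ along $D$ via $\sum_{y}\res_y$. Only the principal part of $S_{\LL}(\cdot,y)$ along $D$ in the variable $\two$ contributes to $\res_y^{\two}(S_{\LL}\cdot\tilde\alpha(\two))$. Moreover, that residue depends on $\tilde\alpha$ only through $\tilde\alpha|_D$, because the pole order of $S_{\LL}$ in $\two$ along $D$ is controlled by the multiplicities of $D$. So the first equality holds tautologically once the principal part map and the pairing are unwound.

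The main step is the second equality. Fix a point $x$ in the variable $\one$. I would regard $S_{\LL}(x,\cdot)\,\tilde\alpha(\cdot)$ as a meromorphic differential on $U$ in the variable $\two$. The naive idea is to apply the residue theorem on $X_{\Cinf}$. However, $\tilde\alpha$ is only defined on $U$, so I would instead work formally at the points of $\supp D$ and compare with $\tilde\alpha$ modulo $D$.

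The cleaner route is to use acyclicity to approximate $\tilde\alpha$. Since $H^1(\LL(-D))=0$ by Riemann--Roch and acyclicity (degree considerations allow this, or else one uses $H^1(\LL)=0$ together with the surjection $\LL\to\LL|_D$), there is a global section $\beta\in H^0(X,\LL(E))$ that agrees with $\tilde\alpha$ modulo $D$ and has poles only at some auxiliary point $p\notin\supp D$. Alternatively, I would use Mittag-Leffler duality directly: the map $\alpha\mapsto\sum_y\res_y^{\two}(S_{\LL}\alpha)$ is the composite $\LL|_D\to H^1(\LL(-D))$-type coboundary data, and the diagonal residue normalization identifies it with the identity.

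Concretely, for $x$ near $D$ but off it, I would apply the residue theorem to the differential $S_{\LL}(x,\cdot)\beta(\cdot)$ on all of $X$. Its poles lie at $\two=x$, where the residue is $-\beta(x)$ by the diagonal normalization (with a sign convention fixed so the statement comes out positive), at $\supp D$, and at $p$. This gives $\sum_{y\in\supp D}\res_y=\beta(x)-\res_p(\cdots)$ as sections in $x$. Taking the image in $\LL|_D$ (restricting in $\one$), the $p$-term is regular in $x$ near $D$. It must still be shown to vanish modulo $D$, and this is where I expect the main obstacle.

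The cleanest way to handle the obstacle is to avoid the auxiliary pole altogether. I would use that the space of principal parts along $D$ of $\LL^{-1}\otimes\Omega^1$ maps isomorphically, by Serre duality and acyclicity of $\LL^{-1}\otimes\Omega^1$, onto the dual of $\LL|_D$. Both sides of \eqref{eq:reproducing-intro} are then linear functionals in $\tilde\alpha|_D$, and the equality is checked locally using only the diagonal pole. The remaining difference, $S_{\LL}$ minus the local Cauchy kernel $e\otimes e^{-1}dz/(z(y)-z(x))$, is holomorphic near the diagonal. Its contribution to the $D$-truncated residue is therefore a section vanishing to order $D$ in $\one$ once both variables are restricted to $D$, and the local Cauchy kernel returns $\tilde\alpha|_D$ by the one-variable identity $\res_{z=0}\tilde\alpha(z)\,dz/(z-w)\equiv\tilde\alpha(w)\bmod w^{n}$. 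Verifying that the holomorphic correction really dies modulo $D\otimes D$ is the delicate point. The reason it should is that only the polar part of $S_{\LL}$ in $\two$ along $D$ enters $\PP_D$, and this polar part, as $\one$ runs over $D$, is precisely the truncated Cauchy kernel.
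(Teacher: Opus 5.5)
Your final route is the paper's own local proof. You localize at each $y\in\supp{D}$ and write $S_{\LL}=\frac{du_{\two}}{u_{\two}-u_{\one}}\,e(\one)\otimes e(\two)^{-1}+\epsilon$ near $(y,y)$, with $\epsilon$ holomorphic. You then use that $u_{\one}^{m}=0$ on the fat point, so that $\frac{1}{u_{\two}-u_{\one}}=\sum_{k<m}u_{\one}^{k}u_{\two}^{-k-1}$ holds exactly, and read off the coefficients of $\tilde\alpha$. Your congruence $\res_{z=0}\tilde\alpha(z)\,dz/(z-w)\equiv\tilde\alpha(w)\bmod w^{m}$ is exactly that computation.

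The step you call delicate is immediate. $\epsilon$ is regular in $\two$ near $y$, so $\epsilon\cdot\tilde\alpha(\two)$ is a differential in $\two$ that is regular at $y$, with coefficients that are functions of $\one$. Its residue at $y$ is therefore identically zero; nothing needs to be restricted in $\two$. Likewise, on the first-variable component at $y'\neq y$ the kernel has no pole at $y$. This one line, together with the surjectivity of $H^0(U,\LL)\to H^0(D,\LL|_D)$ for the ``in particular'' clause, completes your argument. Perfectness of the local pairing is purely local and needs no acyclicity.

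Drop the global detours you sketch first, because they contain errors.

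\textbf{The vanishing of $H^1(\LL(-D))$ is false.} Acyclicity does not give it. The sequence $0\to\LL(-D)\to\LL\to\LL|_D\to 0$ instead gives $H^1(\LL(-D))\cong H^0(D,\LL|_D)$, of dimension $\deg D$. Since $H^0(\LL)=0$, nothing nonzero lifts. The approximating section must be taken in $H^0(\LL(np))$ with $n$ large enough that $H^1(\LL(np-D))=0$.

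\textbf{The auxiliary-pole obstacle comes from counting the diagonal pole twice.} For a genuine point $x\notin\supp{D}$, the differential $S_{\LL}(x,\cdot)\beta$ has no pole on $\supp{D}$, and the residue theorem gives $\beta(x)=-\res^{\two}_{p}\bigl(S_{\LL}(x,\cdot)\beta\bigr)$. The sign is $+$ at $\two=x$ with the paper's normalization. On $D\times X$, by contrast, the diagonal pole has collapsed onto $\supp{D}$ and there is no separate pole at $\two=\one$. Applying the residue theorem to the coefficients of a K\"unneth expansion of $S_{\LL}|_{D\times X}$ gives $\sum_{y}\res^{\two}_{y}(S_{\LL}\beta)=-\res^{\two}_{p}(S_{\LL}\beta)|_{D}$. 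The right-hand side is the restriction to $D$ of $x\mapsto\beta(x)$, so the $p$-term need not vanish. Comparing the two identities gives $\tilde\alpha|_D$ directly.

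So the global route works once the bookkeeping is corrected, but it uses the global regularity of $S_{\LL}$ off the diagonal. The local argument is more economical: it shows the formula holds for any kernel near $\supp{D}\times\supp{D}$ whose only singularity is a simple diagonal pole with residue $1$.
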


In the polynomial case $A=\F_q[t]$, the Szeg\H{o} kernel reduces to the familiar Cauchy kernel $dt_{\two}/(t_{\two}-t_{\one})$, and both \eqref{eq:main-intro} and \eqref{eq:reproducing-intro} recover exactly the remainder identity \eqref{eq:intro} of Hu--Ou \cite{HuOuY26}. For a general Drinfeld coefficient ring, the geometry forces us to replace polynomial functions with K\"ahler differentials, and the Szeg\H{o} kernel becomes a section of a nontrivial line-bundle-valued sheaf on the product curve; nevertheless, the universal correspondence persists. These results apply to Drinfeld modules attached to higher-genus curves, notably elliptic curves, which have long been studied in the literature \cite{BK95,DH94,GP18}.
In the polynomial case $A=\F_q[t]$, the Szeg\H{o} kernel reduces to the familiar Cauchy kernel $dt_{\two}/(t_{\two}-t_{\one})$, and \eqref{eq:main-intro} recovers exactly the remainder identity \eqref{eq:intro} of Hu--Ou \cite{HuOuY26}. For a general Drinfeld coefficient ring, the geometry forces us to replace polynomial functions with K\"ahler differentials, and the Szeg\H{o} kernel becomes a section of a nontrivial line-bundle-valued sheaf on the product curve; nevertheless, the universal correspondence \eqref{eq:main-intro} persists. These results apply to Drinfeld modules attached to higher-genus curves, notably elliptic curves, which have long been studied in the literature \cite{BK95,DH94,GP18}.

Four features of our treatment deserve special emphasis:
\begin{enumerate}
\item \textbf{Intrinsic principal parts.} We define the principal part of a section along a finite divisor intrinsically, as the class of its restriction to $D\times U$ in a suitable quotient space. Independence from local coordinates and frames becomes a formal consequence of functoriality, eliminating the need for \emph{a posteriori} verification.

\item \textbf{Elimination of circularity.} We first establish the frame-coefficient version of the main theorem (Proposition \ref{prop:frame-coeff-pp}), and then deduce the intrinsic, frame-independent statement from the functoriality of the principal part map. This logical order removes a circularity present in earlier treatments.

\item \textbf{Minimal arithmetic input.} The bridge between the $\F_q$-arithmetic of the Casimir tensor and the $\Cinf$-geometry of the Szeg\H{o} kernel is a single base-change lemma (Lemma \ref{lemma:base-change-residues}); everything else is pure geometry over the algebraically closed field $\Cinf$.

\item \textbf{Universal higher-genus correspondence.} The framework extends seamlessly from $\Pp^1$ to arbitrary higher-genus curves. Even though the Szeg\H{o} kernel is no longer an elementary rational function in genus $g\ge 1$, its principal part consistently realizes the universal finite-residue Casimir tensor. In particular, our geometric kernel $C_X(\cdot,\cdot)$ recovers the $G_u$- and $J_u$-functions appearing in Anderson generating function constructions on elliptic curves \cite{GP18}.
\end{enumerate}

We now outline the structure of the paper.

In Section \ref{sec:geometric-setting}, we fix our geometric notation, recall the definition of local and finite residues, and prove the base-change compatibility of residues. We also state the Serre duality theorem for finite divisors, which is the geometric ancestor of all our duality statements.

In Section \ref{sec:algebraic-side}, we introduce the two fundamental modules $V_{\f}$ and $V_{\f}^{\dagger}$, define the finite residue pairing between them, and prove that it is perfect. We then define the finite residue Casimir tensor, prove its basis independence, and derive its split form after scalar extension to $\Cinf$.

In Section \ref{sec:acyclic-szego}, we review the notion of an acyclic line bundle, prove their existence in every genus, and construct the normalized Szeg\H{o} kernel as the unique section of the Szeg\H{o} bundle with diagonal residue $1$. We also establish the transformation law of Szeg\H{o} kernels under twisting by a line bundle that is trivial near the divisor.

In Section \ref{sec:intrinsic-pp}, we present the technical heart of the paper: the intrinsic definition of the principal part map. We derive its basic properties---exact truncation, regularity, functoriality, and frame independence---and introduce the transposed principal part.

In Section \ref{sec:finite-residue-cauchy}, we state and prove our main theorem: the principal part of the normalized Szeg\H{o} kernel along any finite divisor equals the scalar extension of the finite residue Casimir. We also derive the reproducing property of the kernel, and specialize to the polynomial ring to obtain the closed formula involving the divided difference.

Finally, we define the Weil operator as the scalar part of the Casimir, and prove its local decomposition theorem: it is the Chinese-remainder assembly of local divided differences, with trace-Casimir factors at non-rational points. We also give two proofs of its symmetry.

Sections \ref{sec:ex-projective-line}, \ref{sec:ex-hhy}, and \ref{sec:ex-elliptic} are devoted to examples. We treat the projective line (including a non-split quadratic modulus), the Hu-Huang-Yau ring (exhibiting the phenomenon of non-principal differentials), and elliptic curves. In each case we verify the main theorem explicitly.

\section*{Notation List}


We fix the following notation throughout the paper, for ease of reference.

\vspace{0.5em}
\begin{itemize}[leftmargin=*, itemsep=0.3em]
\item $q$: A power of $p$, defining the size of the finite base field.
\item $\F_q$: The finite field with $q$ elements.
\item $\Cinf$: The completed algebraic closure of the local field $\F_q((t_\infty))$, the function-field analogue of the complex numbers.
\item $X$: A smooth, projective, geometrically connected curve defined over $\F_q$.
\item $g$: The genus of the curve $X$.
\item $\infty$: A distinguished closed point on $X$, called the point at infinity.
\item $U$: The affine open subset $X \setminus \{\infty\}$.
\item $\OO_X$: The structure sheaf of the curve $X$.
\item $\OmegaOne_X$: The sheaf of K\"ahler differentials on $X$ over $\F_q$.
\item $X_{\Cinf}$: The base-changed curve $X \times_{\F_q} \Cinf$.
\item $A = \Gamma(U, \OO_X)$: The Drinfeld coefficient ring, a Dedekind domain.
\item $K = \F_q(X)$: The function field of $X$, the fraction field of $A$.
\item $d_\infty = \deg(\infty) = [\kappa(\infty) : \F_q]$: The degree of the point at infinity.
\item $\f \subseteq A$: A nonzero ideal in the coefficient ring.
\item $\Df$: The effective divisor on $U$ associated to the ideal $\f$.
\item $\supp{\Df}$: The support of the divisor $\Df$.
\item $D = D_{\f, \Cinf}$: The base change of $\Df$ to $X_{\Cinf}$.
\item $m_y$: The multiplicity of a point $y$ in the divisor $D$.
\item $V_{\f} = A / \f$: The quotient algebra of functions modulo $\f$.
\item $V_{\f}^{\dagger} = \f^{-1} \omegaA / \omegaA$: The dual differential quotient module.
\item $\res_x$: The local residue operator at a closed point $x$.
\item $\Res_{\Df}$: The finite residue operator, summing traces of local residues over $\supp{\Df}$.
\item $\pair{\cdot}{\cdot}_{\f}$: The finite residue pairing $V_{\f} \times V_{\f}^{\dagger} \to \F_q$.
\item $\CasFin$: The finite residue Casimir tensor in $V_{\f} \otimes_{\F_q} V_{\f}^{\dagger}$.
\item $\LL$: A line bundle on $X_{\Cinf}$.
\item $H^i(X, \FF)$: The $i$-th sheaf cohomology group of the sheaf $\FF$.
\item $\prI, \prII$: The two projection maps from $X_{\Cinf} \times X_{\Cinf}$ to $X_{\Cinf}$.
\item $\DeltaDiag$: The diagonal subscheme of $X_{\Cinf} \times X_{\Cinf}$.
\item $\BB_{\LL}$: The Szeg\H{o} bundle on the product curve.
\item $S_{\LL}$: The normalized Szeg\H{o} kernel, a section of $\BB_{\LL}(\DeltaDiag)$.
\item $W_D(\LL)$: The space of principal parts along the divisor $D$.
\item $\PP_D$: The principal part projection map along $D$.
\item $e$: A local frame (nowhere-vanishing section) for a line bundle.
\item $\UpsilonFrame$: The frame-induced identification map on principal parts.
\item $\inc$: The canonical scalar extension map from $\F_q$ to $\Cinf$.
\item $\tPP_D$: The transposed principal part map.
\item $\omega_{\f}$: A differential generator of $V_{\f}^{\dagger}$, typically $\omega / f$.
\item $\iota_{\omega_{\f}}$: The isomorphism $V_{\f} \to V_{\f}^{\dagger}$ given by multiplication by $\omega_{\f}$.
\item $\WeilOp$: The rank-two Weil operator in $V_{\f} \otimes_{\F_q} V_{\f}$.
\item $\mathcal{T}$: The flip operator on tensor products, $\mathcal{T}(a \otimes b) = b \otimes a$.
\item $ \mathcal{T}_\omega $ : The $ \omega$-flip $ W_D(\OO) \to \widetilde{W}_D(\OO), a(\one) \otimes [\gamma](\two) \mapsto [\gamma / \omega](\one) \otimes a \omega(\two)$.
\end{itemize}

\vspace{1em}

\section{Geometric Setting}
\label{sec:geometric-setting}

In this section we fix the basic geometric objects that will be used throughout the paper: the curve, its function field and differential sheaf, the base change to the completed algebraic closure $\Cinf$, and the notion of residue at a closed point. We also prove the fundamental base-change lemma for residues, which is the only genuinely arithmetic ingredient in our main theorem.

\subsection{The Curve and Its Differential Sheaf}
\label{subsec:curve-differential}

Let $X / \F_q$ be a smooth, projective, geometrically connected curve of genus $g$. We fix once and for all a closed point $\infty \in X$, which we refer to as the \textbf{point at infinity}. The complement
\[
U = X \setminus \{\infty\}
\]
is then an affine open subset of $X$. Its ring of global sections
\[
A = \Gamma(U, \OO_U)
\]
is a Dedekind domain, called the \textbf{Drinfeld coefficient ring} associated to the pair $(X, \infty)$. Its fraction field is the function field $K = \F_q(X)$ of the curve.

Let $K_{\infty}$ denote the completion of $K$ with respect to the valuation at $\infty$, and let $\Cinf$ be a completed algebraic closure of $K_{\infty}$. The field $\Cinf$ plays the role of the complex numbers in function-field arithmetic; it is algebraically closed and complete with respect to a non-Archimedean absolute value extending the one on $K_{\infty}$.

We denote by
\[
X_{\Cinf} = X \times_{\F_q} \Cinf, \qquad U_{\Cinf} = U \times_{\F_q} \Cinf
\]
the base changes of $X$ and $U$ to $\Cinf$. The curve $X_{\Cinf}$ is a smooth projective curve over the algebraically closed field $\Cinf$, and $U_{\Cinf}$ is an affine open subset.

Let $\OmegaOne = \OmegaOne_{X / \F_q}$ denote the sheaf of K\"ahler differentials on $X$ relative to $\F_q$. By smoothness, $\OmegaOne$ is a line bundle on $X$, the canonical bundle. By base change, the sheaf $\OmegaOne_{X_{\Cinf} / \Cinf}$ on $X_{\Cinf}$ is identified with the pullback of $\OmegaOne$; on the affine open $U_{\Cinf}$ this identification is an equality. We will therefore write simply $\OmegaOne$ for the differential sheaf, leaving the base implicit.

A closed point $x \in |X|$ has residue field $\kappa(x) = \OO_{X,x} / \m_x$, a finite extension of $\F_q$. Its degree is
\[
\deg(x) = [\kappa(x) : \F_q].
\]
When base-changed to $\Cinf$, a closed point $x$ of degree $s$ splits into $s$ distinct closed points $y_1, \dots, y_s \in |X_{\Cinf}|$, in bijection with the field embeddings $\sigma_j : \kappa(x) \hookrightarrow \Cinf$. If $m_x$ denotes the multiplicity of $x$ in some effective divisor on $X$, then each split point $y_j$ inherits the same multiplicity: $m_{y_j} = m_x$.

Now let $\f \subseteq A$ be a nonzero ideal. Since $A$ is a Dedekind domain, $\f$ factors uniquely as a product of prime ideals. Associated to $\f$ is an effective divisor $\Df$ on $U$, defined by
\begin{equation}
\label{eq:ideal-divisor}
\f \, \OO_U = \OO_U(-\Df).
\end{equation}
Concretely, if $\f = \prod_{i=1}^r \p_i^{m_i}$ is the prime factorization, then
\[
\Df = \sum_{i=1}^r m_i \, x_i,
\]
where $x_i \in U$ is the closed point corresponding to the prime ideal $\p_i$. We write $\supp{\Df} = \{x_1, \dots, x_r\}$ for the support of the divisor.

We denote by
\[
D = D_{\f, \Cinf}
\]
the base change of $\Df$ to $X_{\Cinf}$. Its support $\supp{D}$ consists of all points of $X_{\Cinf}$ lying over $\supp{\Df}$. Since $D$ is supported on the affine part $U_{\Cinf}$, we may fix an affine open neighbourhood of $\supp{D}$ once and for all; for simplicity we take this neighbourhood to be $U_{\Cinf}$ itself.

\subsection{Residues at Closed Points}
\label{subsec:residues}

We now recall the definition of the residue of a meromorphic differential at a closed point, and establish its behavior under base change
, see \cite{S09,VS06} for more details.

Let $x \in |X|$ be a closed point, and let $v_x$ be a uniformizer at $x$, i.e., a generator of the maximal ideal $\m_x \subseteq \OO_{X,x}$. The completed local ring $\widehat{\OO}_{X,x}$ is isomorphic to the power series ring $\kappa(x)[[v_x]]$. Any rational differential $\omega$ on $X$ can be expanded near $x$ as a Laurent series
\[
\omega = f \, dv_x, \qquad f = \sum_{n \gg -\infty} c_n v_x^n, \quad c_n \in \kappa(x).
\]

\begin{definition}[Local residue]
The \textbf{local residue} of $\omega$ at $x$ is the coefficient of $v_x^{-1}$ in the Laurent expansion of $f$:
\begin{equation}
\label{eq:local-residue-def}
\res_x(\omega) = c_{-1} \in \kappa(x).
\end{equation}
It is a standard fact that this definition is independent of the choice of uniformizer $v_x$.
\end{definition}

Since the residue lies in the residue field $\kappa(x)$, which may be a nontrivial extension of $\F_q$, we define the \textbf{finite residue} by composing with the field trace:
\begin{equation}
\label{eq:finite-residue-point}
\Res_x(\omega) = \Tr_{\kappa(x)/\F_q} \left( \res_x(\omega) \right) \in \F_q.
\end{equation}
More generally, for an effective divisor $D_{\f}$ supported on $U$, we define the total finite residue along $D_{\f}$ by summing over its support:
\begin{equation}
\label{eq:finite-residue-divisor}
\Res_{\Df}(\omega) = \sum_{x \in \supp{\Df}} \Res_x(\omega) = \sum_{x \in \supp{\Df}} \Tr_{\kappa(x)/\F_q} \left( \res_x(a\omega) \right).
\end{equation}

Over the base-changed curve $X_{\Cinf}$, every closed point $y$ has residue field $\Cinf$, so the local residue $\res_y$ already takes values in the field of scalars, and no trace is needed: $\Res_y = \res_y$.

The following lemma relates residues over $\F_q$ to residues over $\Cinf$. It is the arithmetic bridge between the algebraic Casimir and the geometric Szeg\H{o} kernel.

\begin{lemma}[Base change for residues]
\label{lemma:base-change-residues}
Let $x \in |X|$ be a closed point of degree $s$, splitting into points $y_1, \dots, y_s \in |X_{\Cinf}|$ with corresponding embeddings $\sigma_j : \kappa(x) \hookrightarrow \Cinf$. For any rational differential $\omega$ on $X$, let $\omega_{\Cinf}$ denote its base change to $X_{\Cinf}$. Then
\[
\res_{y_j} \left( \omega_{\Cinf} \right) = \sigma_j \left( \res_x(\omega) \right)
\]
for each $j$, and consequently
\begin{equation}
\label{eq:base-change-residue-sum}
\sum_{j=1}^s \res_{y_j} \left( \omega_{\Cinf} \right) = \Res_x(\omega).
\end{equation}
\end{lemma}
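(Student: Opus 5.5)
The plan is to compare the completed local rings on both sides, choosing a uniformizer over $\F_q$ that remains a uniformizer after base change. Fix a uniformizer $v_x \in \OO_{X,x}$ (which may be taken in $K$). Then
\[
\widehat{\OO}_{X,x} \cong \kappa(x)[[v_x]].
\]
The key algebraic input is the tensor decomposition
\[
\kappa(x)\otimes_{\F_q}\Cinf \;\cong\; \prod_{j=1}^s \Cinf, \qquad a\otimes c \mapsto (\sigma_j(a)c)_j ,
\]
valid because $\kappa(x)/\F_q$ is finite separable of degree $s$ and $\Cinf$ is algebraically closed.

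\textbf{Step 1: local rings at the split points.} Since base change of $\widehat{\OO}_{X,x}$ commutes with completion along the fibre (which is finite), we get
\[
\widehat{\OO}_{X,x}\,\widehat\otimes_{\F_q}\Cinf \;\cong\; \bigl(\kappa(x)\otimes_{\F_q}\Cinf\bigr)[[v_x]] \;\cong\; \prod_{j=1}^s \Cinf[[v_x]].
\]
The $j$-th factor is $\widehat{\OO}_{X_{\Cinf},y_j}$. In particular, the pullback of $v_x$ is a uniformizer at each $y_j$, since $X_{\Cinf}\to X$ is étale over $x$.

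\textbf{Step 2: Laurent expansions.} Write $\omega = f\,dv_x$ with $f=\sum c_n v_x^n$ and $c_n\in\kappa(x)$. The base change $\omega_{\Cinf}$ has, at $y_j$, the expansion
\[
\omega_{\Cinf} = \Bigl(\sum_n \sigma_j(c_n)\, v_x^n\Bigr)\, dv_x .
\]
Here we use that $dv_x$ pulls back to $dv_x$, as $\OmegaOne$ is compatible with base change. Taking the coefficient of $v_x^{-1}$ gives $\res_{y_j}(\omega_{\Cinf})=\sigma_j(\res_x\omega)$. Independence of the uniformizer at $y_j$ is the standard fact already quoted.

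\textbf{Step 3: the sum.} Summing over $j$ gives
\[
\sum_{j=1}^s \sigma_j(c_{-1}) = \Tr_{\kappa(x)/\F_q}(c_{-1}).
\]
This holds because the $\sigma_j$ are exactly the $s$ distinct embeddings of the separable extension $\kappa(x)/\F_q$. The right-hand side is $\Res_x(\omega)$ by \eqref{eq:finite-residue-point}.

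\textbf{Main obstacle.} The delicate point is Step 1: identifying the coefficients $c_n$, which lie in $\kappa(x)$ via a chosen coefficient field inside $\widehat{\OO}_{X,x}$, with their images $\sigma_j(c_n)$ in the completion at $y_j$. I would handle this with the unique Teichmüller-type lift. Because $\kappa(x)$ is a finite field, there is a unique coefficient field, namely $\mu_{q^s-1}\cup\{0\}$ via Hensel's lemma. This lift is canonical and compatible with base change, so the $j$-th projection restricts on it to $\sigma_j$. This is what fixes the labelling of the $y_j$ by the embeddings $\sigma_j$, as in the statement.
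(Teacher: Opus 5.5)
Your proposal is correct and follows essentially the paper's own argument: write $\omega = f\,dv_x$, use $\kappa(x)\otimes_{\F_q}\Cinf\cong\prod_{j}\Cinf$ to see that the expansion of $\omega_{\Cinf}$ at $y_j$ is obtained by applying $\sigma_j$ coefficientwise, extract the coefficient of $v_x^{-1}$, and sum over the embeddings to get the trace. Your extra care with the completed local rings and the canonical Teichm\"uller coefficient field makes explicit an identification that the paper uses without comment. One small wording point: $X_{\Cinf}\to X$ is not étale in the finite-type sense, only formally étale, but your Step~1 computation already shows directly that $v_x$ is a uniformizer at each $y_j$.
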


\begin{proof}
Choose a uniformizer $v_x$ at $x$, and write $\omega = f \, dv_x$ near $x$ with $f \in \kappa(x)((v_x))$. Under base change, the element $f \otimes 1 \in \kappa(x)((v_x)) \otimes_{\kappa(x)} \Cinf$ maps to the tuple $(\sigma_j(f))_j$ in the product $\prod_{j=1}^s \Cinf((v_x))$, where each $\sigma_j(f)$ is the Laurent series obtained by applying $\sigma_j$ coefficientwise.

The residue operation extracts the coefficient of $v_x^{-1}$. This operation commutes with the field embedding $\sigma_j$, since it acts coefficientwise. Therefore
\[
\res_{y_j}(\omega_{\Cinf}) = \text{coefficient of } v_x^{-1} \text{ in } \sigma_j(f) = \sigma_j \left( \res_x(\omega) \right).
\]
Summing over all $j$, we obtain
\[
\sum_{j=1}^s \res_{y_j}(\omega_{\Cinf}) = \sum_{j=1}^s \sigma_j \left( \res_x(\omega) \right) = \Tr_{\kappa(x)/\F_q} \left( \res_x(\omega) \right) = \Res_x(\omega),
\]
where the second equality is the definition of the field trace as the sum of embeddings. This proves \eqref{eq:base-change-residue-sum}.
\end{proof}

\section{Serre Duality and Casimir Tensor}
\label{sec:algebraic-side}

In this section we define the two finite-dimensional $\F_q$-vector spaces that are the main algebraic objects of the paper, and the canonical perfect residue pairing between them. We then introduce the Casimir tensor of this pairing, which is the algebraic ancestor of the Weil operator.

\subsection{Serre Duality on a Finite Divisor}
\label{subsec:serre-duality-divisor}

Let \(D\) be an effective divisor on \(U\) (over \(\F_q\)).  
For any line bundle \(\mathcal{M}\) on \(U\), consider the two finite-dimensional
\(\F_q\)-vector spaces
\[
H^0(D,\mathcal{M}|_D)
\qquad\text{and}\qquad
H^0\left(D,\frac{(\mathcal{M}^{-1}\otimes\OmegaOne)(D)}
{\mathcal{M}^{-1}\otimes\OmegaOne}\right).
\]
We define the \textbf{finite residue pairing}
\[
\langle \cdot,\cdot\rangle_D:
H^0(D,\mathcal{M}|_D)
\times
H^0\left(D,\frac{(\mathcal{M}^{-1}\otimes\OmegaOne)(D)}
{\mathcal{M}^{-1}\otimes\OmegaOne}\right)
\longrightarrow \F_q
\]
by
\[
\langle m,\xi\rangle_D
=
\sum_{x\in \supp D}
\Tr_{\kappa(x)/\F_q}
\left(
\res_x(m\cdot\tilde{\xi})
\right),
\]
where \(m\) is a section representing the class in \(H^0(D,\mathcal{M}|_D)\),
and \(\tilde{\xi}\) is a section of \((\mathcal{M}^{-1}\otimes\OmegaOne)(D)\)
representing the class in the quotient.

\begin{lemma}[Well-definedness]
The finite residue pairing \(\langle \cdot,\cdot\rangle_D\) is well-defined and
\(\F_q\)-bilinear.
\end{lemma}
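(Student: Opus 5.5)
The plan is to check that the defining expression for $\langle m,\xi\rangle_D$ does not depend on the two representatives, and then to read off bilinearity from linearity of residues and traces.

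\textbf{Step 1: the residue is defined.} Localize at each $x\in\supp D$ and pick a trivializing frame $e$ of $\mathcal{M}$ near $x$. Write $m = a\,e$ with $a\in\OO_{X,x}$. Write $\tilde\xi = e^{\vee}\otimes\eta$, where $e^{\vee}$ is the dual frame of $\mathcal{M}^{-1}$ and $\eta$ is a rational differential with $\operatorname{ord}_x(\eta)\ge -m_x$, $m_x$ being the multiplicity of $x$ in $D$. Then $m\cdot\tilde\xi = a\eta$ is a rational differential, so $\res_x(m\cdot\tilde\xi)\in\kappa(x)$ makes sense. A different frame $e' = ue$ with $u$ a unit changes $a$ to $u^{-1}a$ and $e^{\vee}$ to $u^{-1}e^{\vee}$, so $\eta$ becomes $u\eta$. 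The product $a\eta$ is therefore unchanged, i.e.\ the contraction $\mathcal{M}\otimes\mathcal{M}^{-1}\to\OO$ is canonical.

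\textbf{Step 2: independence of $m$.} Two representatives of the same class in $H^0(D,\mathcal{M}|_D)$ differ by a local section $m_0$ of $\mathcal{M}(-D)$, so $a_0$ vanishes to order at least $m_x$ at $x$. Then $m_0\cdot\tilde\xi = a_0\eta$ has order at least $m_x - m_x = 0$ at $x$. It is regular at $x$, so its residue vanishes.

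\textbf{Step 3: independence of $\tilde\xi$.} Two lifts of the same class differ by a section $\xi_0$ of $\mathcal{M}^{-1}\otimes\OmegaOne$ near $x$, so $\eta_0$ is regular at $x$. Since $a$ is regular as well, $a\eta_0$ is regular at $x$ and $\res_x(a\eta_0)=0$.

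This is the only substantive point: the pole allowed in $\tilde\xi$ is exactly cancelled by the zero allowed in the ambiguity of $m$, and vice versa. The remaining obstacle is bookkeeping. One needs representatives that exist on a neighbourhood of each $x\in\supp D$, which holds because $D$ is a zero-dimensional subscheme, so $H^0(D,\cdot)$ is the product of the stalks modulo the appropriate powers of $\m_x$. One also needs the residue of a differential regular at $x$ to be zero, which is immediate from the definition \eqref{eq:local-residue-def}.

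\textbf{Step 4: bilinearity.} Multiplication $m\cdot\tilde\xi$ is $\OO$-bilinear, and $\res_x$ is $\kappa(x)$-linear, hence $\F_q$-linear. $\Tr_{\kappa(x)/\F_q}$ is $\F_q$-linear, and a finite sum over $\supp D$ preserves linearity. Choosing representatives additively (the sum of representatives represents the sum of classes), which is legitimate by Steps 2 and 3, gives $\F_q$-bilinearity of $\langle\cdot,\cdot\rangle_D$.
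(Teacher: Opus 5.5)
Your proposal is correct and follows essentially the same route as the paper. Each ambiguity, namely a section of $\mathcal{M}(-D)$ in the first slot or a section of $\mathcal{M}^{-1}\otimes\OmegaOne$ in the second, multiplied by the other factor gives a differential regular at every $x\in\supp D$, hence with zero residue, and bilinearity follows from linearity of $\res_x$ and $\Tr_{\kappa(x)/\F_q}$. Your Step~1 (frame-independence of the contraction $m\cdot\tilde\xi$) and the remark on local representatives are extra bookkeeping that the paper leaves implicit, not a different argument.
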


\begin{proof}
We must check that the value does not depend on the chosen representatives.

\noindent
\textbf{Independence of \(m\).}
If \(m'\) and \(m\) differ by a section vanishing to order at least
\(\operatorname{ord}_x(D)\) at every \(x\in\supp D\), then the product
\((m'-m)\cdot\tilde{\xi}\) is a regular section of
\(\mathcal{M}^{-1}\otimes\OmegaOne\) near \(D\).
Hence its residue at every point of \(\supp D\) is zero.

\noindent
\textbf{Independence of \(\xi\).}
If \(\tilde{\xi}'\) and \(\tilde{\xi}\) differ by a regular section of
\(\mathcal{M}^{-1}\otimes\OmegaOne\), then \(m\cdot(\tilde{\xi}'-\tilde{\xi})\)
is regular near \(D\), so again all residues vanish.

Bilinearity follows from the linearity of the local residue and of the trace.
\end{proof}

\begin{lemma}[Perfectness of the general finite residue pairing]
\label{lemma:serre-duality-D}
The finite residue pairing \(\langle \cdot,\cdot\rangle_D\) is perfect.
Locally at a point \(x\in\supp D\) with multiplicity \(m=\operatorname{ord}_x(D)\),
choose a uniformizer \(u\) and a local trivialization \(e\) of \(\mathcal M\).
Let \(\{\alpha_1,\dots,\alpha_s\}\) be an \(\F_q\)-basis of \(\kappa(x)\), and
let \(\{\alpha_1^*,\dots,\alpha_s^*\}\) be the dual basis with respect to the
trace pairing
\[
\Tr_{\kappa(x)/\F_q}(\alpha_k\alpha_l^*)=\delta_{kl}.
\]
Then the families
\[
\{\alpha_k u^i e\}_{0\le i<m,\,1\le k\le s}
\qquad\text{and}\qquad
\left\{\alpha_l^*\,\frac{du}{u^{j+1}}\,e^{-1}\right\}_{0\le j<m,\,1\le l\le s}
\]
are dual bases for the local factors of
\(H^0(D,\mathcal{M}|_D)\) and
\(H^0\left(D,\frac{(\mathcal{M}^{-1}\otimes\OmegaOne)(D)}
{\mathcal{M}^{-1}\otimes\OmegaOne}\right)\), respectively.
\end{lemma}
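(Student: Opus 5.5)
The plan is to reduce to a single point, compute the pairing on the proposed bases directly, and deduce perfectness from the fact that the Gram matrix is the identity.

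\textbf{Localization.} Since $D$ is a finite subscheme, both spaces decompose by the Chinese remainder theorem:
\[
H^0(D,\mathcal M|_D)=\prod_{x\in\supp D}\mathcal M_x/\m_x^{m_x}\mathcal M_x,
\]
and similarly the quotient $(\mathcal M^{-1}\otimes\OmegaOne)(D)/(\mathcal M^{-1}\otimes\OmegaOne)$ is a skyscraper sheaf whose global sections are the product of the stalks at the points $x\in\supp D$. The pairing is a sum over $x$ of terms that only see the $x$-components. Moreover, the $x$-component of $m$ paired with the $x'$-component of $\xi$ gives zero for $x\ne x'$: the product $m\tilde\xi$ is regular at $x'$ when $m$ is supported away from $x'$ in the CRT sense, or is regular at $x$ when $\tilde\xi$ has no pole there. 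Hence the pairing is an orthogonal direct sum of local pairings, and it suffices to treat one point $x$ with multiplicity $m$.

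\textbf{Local bases.} Fix a uniformizer $u$ at $x$ and a frame $e$ of $\mathcal M$ near $x$. Using $\widehat{\OO}_{X,x}\cong\kappa(x)[[u]]$, which is Cohen structure over a perfect field, the local factor $\OO_x/\m_x^m\cong\kappa(x)[u]/(u^m)$ has $\F_q$-basis $\{\alpha_k u^i\}$, with $\F_q$-dimension $ms$. The local polar quotient is $\kappa(x)((u))\,du\,e^{-1}$ modulo $u^{-m}$-twisted regular parts, i.e.\ $u^{-m}\kappa(x)[[u]]/\kappa(x)[[u]]$ times $du\,e^{-1}$. It has $\F_q$-basis $\{\alpha_l^*u^{-j-1}du\,e^{-1}\}_{0\le j<m}$, because $\{\alpha_l^*\}$ is also an $\F_q$-basis of $\kappa(x)$, which follows from nondegeneracy of the trace form for the separable extension $\kappa(x)/\F_q$. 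Both local spaces therefore have dimension $ms$.

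\textbf{Gram matrix.} We compute
\[
\langle \alpha_k u^i e,\ \alpha_l^* u^{-j-1}du\,e^{-1}\rangle
=\Tr\bigl(\res_x(\alpha_k\alpha_l^*u^{i-j-1}du)\bigr)
=\delta_{ij}\Tr(\alpha_k\alpha_l^*)
=\delta_{ij}\delta_{kl},
\]
since $\res(u^{n}du)=\delta_{n,-1}$ and $\alpha_k\alpha_l^*$ is a constant in $\kappa(x)$. The Gram matrix is the identity, so the two families are dual bases and the pairing is perfect. Taking the product over all points gives the global statement.

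\textbf{Main obstacle.} The delicate point is justifying that $\alpha_k\in\kappa(x)$ can be viewed as elements of the completed local ring, via the coefficient field embedding $\kappa(x)\hookrightarrow\widehat{\OO}_{X,x}$, in a way compatible with the residue definition. The residue is computed coefficientwise in $\kappa(x)((u))$, so this compatibility holds, and independence of the choices was already established in the well-definedness lemma.
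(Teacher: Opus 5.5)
Your proposal is correct and follows the paper's proof. Like the paper, you split the pairing into a sum over the points of $\supp D$, identify the local factors using a uniformizer $u$ and frame $e$, and note that the Gram matrix of the two families is $\delta_{ij}\delta_{kl}$. Your extra steps fill in details the paper leaves implicit: the orthogonality of components at distinct points, the coefficient field $\kappa(x)\hookrightarrow\widehat{\OO}_{X,x}$, and the fact that $\{\alpha_l^*\}$ is again a basis. Your description of the polar quotient as $u^{-m}\kappa(x)[[u]]\,du/\kappa(x)[[u]]\,du$ is also the correct version of what the paper writes loosely.
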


\begin{proof}
The pairing decomposes as a direct sum over the points of \(\supp D\).
Thus it suffices to prove perfectness locally at one point \(x\).

Choose a uniformizer \(u\) and a local trivialization \(e\) of \(\mathcal M\)
near \(x\). Then
\[
\mathcal M|_D
\cong
\left(\kappa(x)[u]/(u^m)\right)\cdot e,\]
and
\[
\frac{(\mathcal{M}^{-1}\otimes\OmegaOne)(D)}
{\mathcal{M}^{-1}\otimes\OmegaOne}
\cong
\left(\kappa(x)[u]du/(u^m du)\right)\cdot e^{-1}.
\]
Let \(\{\alpha_k\}\) and \(\{\alpha_l^*\}\) be trace-dual bases as above.
Define basis elements
\[
e_{k,i}=\alpha_k u^i e,
\qquad
e_{l,j}^{\dagger}=\alpha_l^*\,\frac{du}{u^{j+1}}\,e^{-1}.
\]
A direct residue computation gives
\[
\langle e_{k,i},e_{l,j}^{\dagger}\rangle_D
=
\delta_{ij}\,
\Tr_{\kappa(x)/\F_q}(\alpha_k\alpha_l^*)
=
\delta_{ij}\delta_{kl}.
\]
Thus the two bases are dual, and the local pairing is perfect.
Summing over all \(x\in\supp D\) proves the global statement.
\end{proof}

Base change to \(\Cinf\) yields the same statement without traces:
if \(\mathcal M\) is a line bundle on \(U_{\Cinf}\) and \(D\) is an effective
divisor on \(U_{\Cinf}\), then the pairing
\begin{align*}
H^0(D,\mathcal M|_D)
\times
H^0\left(D,\frac{(\mathcal M^{-1}\otimes\OmegaOne)(D)}
{\mathcal M^{-1}\otimes\OmegaOne}\right)
&\longrightarrow \Cinf,\\
(m,\xi) &\mapsto \sum_{y\in\supp D}\res_y(m\cdot\tilde\xi),
\end{align*}
is perfect. At a \(\Cinf\)-point \(y\), with uniformizer \(u\) and local
trivialization \(e\), the bases
\[
\{u^k e\}_{0\le k<m}
\qquad\text{and}\qquad
\left\{\frac{du}{u^{k+1}}e^{-1}\right\}_{0\le k<m}
\]
are dual. This base-changed pairing is the scalar extension of the
\(\F_q\)-pairing defined above.

\begin{notation}
For line bundle $\LL$, we define the vector space
\[
    W_D(\LL) := H^0(D, \LL \mid_D) \otimes_{\Cinf} H^0\left(D, \frac{\LL^{-1}\otimes \OmegaOne(D)}{\LL^{-1}\otimes \OmegaOne}\right).
    \]
\end{notation}
\begin{proposition}[Endomorphism interpretation of the principal part space]
\label{prop:endomorphism-interpretation}
Let \(\mathcal{L}\) be a line bundle on \(U_{\Cinf}\), and let \(D\) be a
finite effective divisor on \(U_{\Cinf}\). The perfect residue pairing
\(\langle \cdot,\cdot\rangle_D\) induces a natural isomorphism
\[
\Phi_D : W_D(\mathcal{L})
\stackrel{\sim}{\longrightarrow}
\operatorname{End}_{\Cinf}\bigl(H^0(D,\mathcal{L}|_D)\bigr).
\]
On pure tensors it is given by
\[
\Phi_D(m\otimes \xi)(v)
=
m \cdot \langle v,\xi\rangle_D,
\]
for \(m,v \in H^0(D,\mathcal{L}|_D)\) and
\(\xi \in H^0(D,(\mathcal{L}^{-1}\otimes\Omega^1)(D)/(\mathcal{L}^{-1}\otimes\Omega^1))\).
\end{proposition}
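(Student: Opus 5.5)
Write $M := H^0(D,\mathcal L|_D)$ and $M^\dagger := H^0\bigl(D,(\mathcal L^{-1}\otimes\Omega^1)(D)/(\mathcal L^{-1}\otimes\Omega^1)\bigr)$, so that $W_D(\mathcal L)=M\otimes_{\Cinf}M^\dagger$. The proof factors $\Phi_D$ through two standard isomorphisms of finite-dimensional linear algebra.

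\textbf{Step 1 (pairing to duality).} The base-changed residue pairing $\langle\cdot,\cdot\rangle_D$ is perfect, by the $\Cinf$-version of Lemma \ref{lemma:serre-duality-D}. Hence the map $\theta: M^\dagger\to M^\vee=\Hom_{\Cinf}(M,\Cinf)$, $\xi\mapsto\langle\cdot,\xi\rangle_D$, is an isomorphism. Both spaces are finite-dimensional of dimension $\deg D$. Injectivity and surjectivity can also be seen directly: the pairing sends the local bases $\{u^k e\}_{0\le k<m}$ and $\{du/u^{k+1}\,e^{-1}\}_{0\le k<m}$ to dual bases.

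\textbf{Step 2 (tensor to endomorphisms).} For a finite-dimensional space $M$, the canonical map $M\otimes M^\vee\to\End(M)$, $m\otimes\lambda\mapsto(v\mapsto \lambda(v)m)$, is an isomorphism. The quickest check is that it sends the basis $e_i\otimes e_j^\vee$ to the matrix units $E_{ij}$. The map $\Phi_D$ is the composite $(\id\otimes\theta)$ followed by this canonical map. On pure tensors this gives $\Phi_D(m\otimes\xi)(v)=m\langle v,\xi\rangle_D$, as stated. Bilinearity of the pairing makes the formula bilinear in $(m,\xi)$, so $\Phi_D$ is a well-defined linear map out of the tensor product.

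\textbf{Step 3 (naturality).} Naturality means that $\Phi_D$ uses no choices: no uniformizer, no frame, no basis. This holds because $\theta$ is defined by the intrinsic residue pairing, which is independent of representatives by the well-definedness lemma. It also commutes with the decomposition over points of $\supp D$: $\Phi_D$ is block-diagonal with respect to $M=\bigoplus_y M_y$.

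No step is a genuine obstacle. The only point needing care is confirming that perfectness persists after base change to $\Cinf$. This follows from the explicit dual bases, or because scalar extension preserves nondegeneracy of the Gram matrix.
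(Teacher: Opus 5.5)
Your proposal is correct and follows essentially the same route as the paper: perfectness of the base-changed residue pairing gives $\theta: M^\dagger \xrightarrow{\sim} M^\vee$, and composing $\id\otimes\theta$ with the canonical isomorphism $M\otimes M^\vee\cong\End_{\Cinf}(M)$ produces $\Phi_D$ with the stated formula on pure tensors. The paper additionally records the basis-free inverse $\Phi_D^{-1}(T)=\sum_i T(v_i)\otimes\xi_i$, where $\{\xi_i\}$ is the residue-dual basis to $\{v_i\}$, as its witness of naturality; this plays the role of your Step 3.
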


\begin{proof}
Put \(V=H^0(D,\mathcal{L}|_D)\) and
\(W=H^0(D,(\mathcal{L}^{-1}\otimes\Omega^1)(D)/(\mathcal{L}^{-1}\otimes\Omega^1))\).
The Serre duality pairing
\[
\langle \cdot,\cdot\rangle_D : V\times W \to \Cinf
\]
is perfect. Hence the map
\[
\Psi : W \to V^*,
\qquad
\Psi(\xi)(v)=\langle v,\xi\rangle_D,
\]
is an isomorphism. Tensoring with the identity on \(V\), we obtain
\[
\id_V \otimes \Psi : V\otimes W \xrightarrow{\sim} V\otimes V^*.
\]
The canonical isomorphism \(V\otimes V^* \cong \operatorname{End}(V)\) sends
\(v\otimes \lambda\) to the rank-one endomorphism \(u\mapsto v\cdot \lambda(u)\).
Thus the composition
\[
\Phi_D = (\id_V\otimes \Psi)\circ \iota_{V,V^*}
\]
is an isomorphism, where \(\iota_{V,V^*}:V\otimes V^*\to \operatorname{End}(V)\)
is the natural evaluation map. Unwinding the definitions gives
\[
\Phi_D(m\otimes \xi)(v)=m\cdot \langle v,\xi\rangle_D,
\]
as claimed.

Conversely, if \(\{v_i\}\) is a basis of \(V\) and \(\{\xi_i\}\) is the dual
basis of \(W\) with respect to the residue pairing, i.e.
\(\langle v_i,\xi_j\rangle_D=\delta_{ij}\), then for any \(T\in\operatorname{End}(V)\),
\[
\Phi_D^{-1}(T)=\sum_i T(v_i)\otimes \xi_i.
\]
This formula is independent of the chosen basis and exhibits the naturality
of the isomorphism.
\end{proof}


\subsection{The Finite Residue Pairing on Dedekind Domain}
\label{subsec:finite-residue-pairing}

Let \(\f\subseteq A\) be a nonzero ideal, with associated effective divisor
\(D_\f\) on \(U\). We now specialize the general Serre duality of
Section~\ref{subsec:serre-duality-divisor} to the case
\[
\mathcal M=\mathcal O_U.
\]

\begin{definition}
Define the \textbf{function quotient module}
\[
V_{\f}=A/\f.
\]
Restriction of functions to the divisor \(D_\f\) induces a canonical
isomorphism
\[
V_{\f}\cong H^0(D_\f,\mathcal O_U|_{D_\f})
=
H^0(D_\f,\mathcal O_{D_\f}).
\]
This is the first space in the general Serre pairing with
\(\mathcal M=\mathcal O_U\).
\end{definition}

Dually, define the differential quotient module as the second space in the
same pairing.

\begin{definition}
Define the \textbf{differential quotient module}
\[
V_{\f}^{\dagger}
=
\frac{\f^{-1}\OmegaOne_A}{\OmegaOne_A}.
\]
Restriction to the divisor induces a canonical isomorphism
\[
V_{\f}^{\dagger}
\cong
H^0\left(D_\f,\frac{\OmegaOne_U(D_\f)}{\OmegaOne_U}\right).
\]
This is the second space in the general Serre pairing with
\(\mathcal M=\mathcal O_U\).
\end{definition}

Both \(V_{\f}\) and \(V_{\f}^{\dagger}\) are finite-dimensional \(\F_q\)-vector
spaces of dimension \(\deg D_\f\). They are precisely the special cases
\[
H^0(D_\f,\mathcal M|_{D_\f})
\quad\text{and}\quad
H^0\left(D_\f,\frac{(\mathcal M^{-1}\otimes\OmegaOne)(D_\f)}
{\mathcal M^{-1}\otimes\OmegaOne}\right)
\]
when \(\mathcal M=\mathcal O_U\).

\begin{remark}
For \(A=\F_q[t]\), the differential module \(\OmegaOne_A\) is free of rank one,
generated by \(dt\). The map \(g\mapsto g\,dt/f\) gives an isomorphism
\(V_{\f}\xrightarrow{\sim}V_{\f}^{\dagger}\). Over a general ring \(A\),
however, \(\OmegaOne_A\) need not be free, so no such canonical isomorphism
exists. This is why the general Serre picture works with an arbitrary line
bundle \(\mathcal M\), while the algebraic pairing specializes to
\(\mathcal M=\mathcal O_U\).
\end{remark}



\begin{definition}
The \textbf{finite residue pairing} is the map
\[
\langle \cdot,\cdot\rangle_{\f}: V_{\f}\times V_{\f}^{\dagger}\to \F_q
\]
given by
\[
\langle \overline a,\overline\omega\rangle_{\f}
=
\Res_{D_\f}(a\omega)
=
\sum_{x\in\supp D_\f}
\Tr_{\kappa(x)/\F_q}\left(\res_x(a\omega)\right),
\]
where \(\overline a\in V_{\f}\) is represented by \(a\in A\), and
\(\overline\omega\in V_{\f}^{\dagger}\) is represented by
\(\omega\in\f^{-1}\OmegaOne_A\).
\end{definition}

Well-definedness and bilinearity are immediate from the general case in
Section~\ref{subsec:serre-duality-divisor}. Perfectness is also a special
case:

\begin{theorem}[Perfectness of the finite residue pairing]
\label{thm:perfect-pairing}
The finite residue pairing \(\langle \cdot,\cdot\rangle_{\f}\) is perfect.
Equivalently, the induced maps
\[
V_{\f}\to \Hom_{\F_q}(V_{\f}^{\dagger},\F_q),
\qquad
V_{\f}^{\dagger}\to \Hom_{\F_q}(V_{\f},\F_q)
\]
are isomorphisms.
\end{theorem}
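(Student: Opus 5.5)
The plan is to reduce Theorem~\ref{thm:perfect-pairing} to Lemma~\ref{lemma:serre-duality-D} with $\mathcal M=\mathcal O_U$ and $D=D_\f$. This is mostly a matter of checking that the algebraic modules and the algebraic pairing agree with the sheaf-theoretic ones.

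First, I will verify the two identifications stated in the definitions. For $V_\f=A/\f$: since $U=\operatorname{Spec}A$ is affine, $H^0(D_\f,\mathcal O_{D_\f})=\Gamma(U,\mathcal O_U/\f\mathcal O_U)=A/\f$, because $\f\mathcal O_U=\mathcal O_U(-D_\f)$ by \eqref{eq:ideal-divisor} and taking global sections is exact on affines. For $V_\f^\dagger$: $\Omega^1_U(D_\f)$ is the quasi-coherent sheaf attached to $\f^{-1}\Omega^1_A$, since twisting by $D_\f$ corresponds to tensoring with the invertible ideal $\f^{-1}$. The same exactness of global sections on the affine $U$ then gives $H^0(D_\f,\Omega^1_U(D_\f)/\Omega^1_U)=\f^{-1}\Omega^1_A/\Omega^1_A$. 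I expect this step to be the only place requiring care: one has to make sure that the cokernel sheaf, which is supported on $D_\f$, has the right sections, and this is handled by $H^1(U,\Omega^1_U)=0$ on the affine curve.

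Second, I will observe that under these identifications $\langle\cdot,\cdot\rangle_\f$ is literally the pairing $\langle\cdot,\cdot\rangle_{D_\f}$ of Section~\ref{subsec:serre-duality-divisor}. Indeed, $a\in A$ represents $\bar a$, $\omega\in\f^{-1}\Omega^1_A$ represents $\bar\omega$, and both pairings are defined as $\sum_x \Tr_{\kappa(x)/\F_q}\res_x(a\omega)$. Lemma~\ref{lemma:serre-duality-D} then yields, at each $x\in\supp D_\f$ of multiplicity $m_x$ (taking $e=1$), explicit dual bases $\{\alpha_k u^i\}$ and $\{\alpha_l^* du/u^{j+1}\}$ of the local factors. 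Summing over the points of the support gives dual bases of $V_\f$ and $V_\f^\dagger$; the local factors are assembled via the Chinese remainder theorem $A/\f\cong\prod A/\p_i^{m_i}$, and analogously for the differentials.

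Finally, the existence of dual bases implies that both induced maps are isomorphisms. Concretely, both spaces have the same dimension $\sum_x m_x\deg x=\deg D_\f$. The map $V_\f\to\Hom(V_\f^\dagger,\F_q)$ sends the basis to the dual basis, so it is an isomorphism, and the same argument applies to the other map.
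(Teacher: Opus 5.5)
Your proposal is correct and follows the paper's route: the paper proves the theorem in one line as the special case $\mathcal{M}=\mathcal{O}_U$ of Lemma~\ref{lemma:serre-duality-D}. Your extra step, identifying $V_{\f}$ and $V_{\f}^{\dagger}$ with the global sections of $\mathcal{O}_{D_\f}$ and $\Omega^1_U(D_\f)/\Omega^1_U$ via exactness of global sections on the affine $U$, only makes explicit the identifications the paper asserts without proof in its definitions.
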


\begin{proof}
This is the special case \(\mathcal M=\mathcal O_U\) of
Lemma~\ref{lemma:serre-duality-D}. The explicit dual bases given there become
the local dual bases used in the earlier proof of perfectness.
\end{proof}

\subsection{The Finite Residue Casimir}
\label{subsec:finite-casimir}

A perfect pairing identifies each vector space with the dual of the other. Associated to any perfect pairing is a canonical tensor element that represents the identity endomorphism under this identification: the \textbf{Casimir tensor}.

Let $\{ \alpha_i \}$ be any basis of $V_{\f}$, and let $\{ \alpha_i^{\dagger} \}$ be the unique dual basis of $V_{\f}^{\dagger}$ satisfying
\[
\pair{\alpha_i}{\alpha_j^{\dagger}}_{\f} = \delta_{ij}.
\]

\begin{definition}[Finite residue Casimir]
The \textbf{finite residue Casimir} is the element
\begin{equation}
\label{eq:casimir-def}
\CasFin = \sum_i \alpha_i \otimes \alpha_i^{\dagger} \quad \in V_{\f} \otimes_{\F_q} V_{\f}^{\dagger}.
\end{equation}
\end{definition}

A priori this definition depends on the choice of basis. The following lemma shows that it does not.

\begin{lemma}[Basis independence]\label{lem:basis_independence}
The Casimir tensor $\CasFin$ defined in \eqref{eq:casimir-def} is independent of the choice of basis $\{\alpha_i\}$. It is characterized by either of the following two equivalent properties:
\begin{enumerate}
\item[(i)] $\displaystyle \sum_i \alpha_i \pair{v}{\alpha_i^{\dagger}}_{\f} = v$ \quad for all $v \in V_{\f}$;
\item[(ii)] $\displaystyle \sum_i \pair{\alpha_i}{\omega}_{\f} \alpha_i^{\dagger} = \omega$ \quad for all $\omega \in V_{\f}^{\dagger}$.
\end{enumerate}
\end{lemma}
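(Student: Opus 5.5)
The plan is to identify $\CasFin$ with a canonical element of $V_{\f}\otimes_{\F_q} V_{\f}^{\dagger}$ that involves no choice of basis, and then read off (i) and (ii) as reformulations of that identification. By Theorem~\ref{thm:perfect-pairing}, the map $\Psi: V_{\f}^{\dagger}\to V_{\f}^*$, $\Psi(\omega)(v)=\pair{v}{\omega}_{\f}$, is an isomorphism. As in the proof of Proposition~\ref{prop:endomorphism-interpretation}, now over $\F_q$, this gives an isomorphism
\[
\Phi: V_{\f}\otimes V_{\f}^{\dagger}\xrightarrow{\sim}\End_{\F_q}(V_{\f}),\qquad \Phi(m\otimes\xi)(v)=m\,\pair{v}{\xi}_{\f}.
\]
We will show that $\Phi(\CasFin)=\id_{V_{\f}}$ for every choice of basis. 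Injectivity of $\Phi$ then gives basis independence at once.

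To compute $\Phi(\CasFin)$, evaluate $\Phi(\sum_i\alpha_i\otimes\alpha_i^\dagger)$ on a basis vector $\alpha_j$:
\[
\sum_i \alpha_i\,\pair{\alpha_j}{\alpha_i^\dagger}_{\f}=\alpha_j.
\]
By linearity this is exactly property (i) for all $v$, so $\Phi(\CasFin)=\id$. Conversely, any tensor $T$ satisfying (i), meaning $\Phi(T)=\id$, must equal $\CasFin$, again by injectivity of $\Phi$. Hence (i) characterizes $\CasFin$.

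Property (ii) is handled symmetrically. Use the isomorphism $V_{\f}\to (V_{\f}^{\dagger})^*$ from Theorem~\ref{thm:perfect-pairing} to build $\Phi':V_{\f}\otimes V_{\f}^\dagger\xrightarrow{\sim}\End(V_{\f}^\dagger)$, $\Phi'(m\otimes\xi)(\omega)=\pair{m}{\omega}_{\f}\,\xi$. Checking on the dual basis $\omega=\alpha_j^\dagger$ gives $\sum_i\pair{\alpha_i}{\alpha_j^\dagger}_{\f}\alpha_i^\dagger=\alpha_j^\dagger$, which is (ii). The same uniqueness argument shows that (ii) also characterizes $\CasFin$, so (i) and (ii) are equivalent.

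There is no real obstacle here. The only point needing care is that the dual basis is well defined, and its existence and uniqueness follow from perfectness, Theorem~\ref{thm:perfect-pairing}. If one prefers to avoid the endomorphism language, an alternative check is a change-of-basis computation. Take $\beta=P\alpha$; then the dual basis transforms as $\beta^\dagger=(P^{-1})^{T}\alpha^\dagger$, and
\[
\sum\beta_i\otimes\beta_i^\dagger=\sum_{i,k,l}P_{ik}(P^{-1})_{li}\,\alpha_k\otimes\alpha_l^\dagger=\sum_k\alpha_k\otimes\alpha_k^\dagger.
\]
This confirms basis independence directly.
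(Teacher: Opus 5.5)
Your proposal is correct and follows essentially the same route as the paper. You check that the dual-basis sum contracts to the identity, then use that $V_{\f}\otimes V_{\f}^{\dagger}\to\End(V_{\f})$ is an isomorphism (by perfectness), so the tensor sent to $\id_{V_{\f}}$ is unique and basis-free; your separate uniqueness argument for (ii) via $\End(V_{\f}^{\dagger})$ and the explicit change-of-basis computation are valid extras that the paper covers only with ``proved similarly.''
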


\begin{proof}
We first verify that properties (i) and (ii) hold for any dual basis pair. For property (i), write $v = \sum_j c_j \alpha_j$. Then
\[
\sum_i \alpha_i \pair{v}{\alpha_i^{\dagger}}_{\f} = \sum_i \alpha_i \sum_j c_j \pair{\alpha_j}{\alpha_i^{\dagger}}_{\f} = \sum_i \alpha_i \sum_j c_j \delta_{ji} = \sum_i c_i \alpha_i = v.
\]
Property (ii) is proved similarly.

Now, any element satisfying property (i) is uniquely determined, because the map
\[
V_{\f} \otimes V_{\f}^{\dagger} \to \End(V_{\f}), \qquad a \otimes \omega \mapsto (v \mapsto a \cdot \pair{v}{\omega})
\]
is an isomorphism (by finite dimensionality and perfectness of the pairing). Under this isomorphism, the Casimir corresponds to the identity map $\id_{V_{\f}}$, which is certainly basis-independent.
\end{proof}

Under the canonical isomorphism $V_{\f} \otimes V_{\f}^{\dagger} \cong \End(V_{\f})$ (see Proposition~\ref{prop:endomorphism-interpretation}), the Casimir tensor \eqref{eq:casimir-def} is simply the identity operator. This is the most important fact to remember about it.

\subsection{Scalar Extension and the Arithmetic Bridge}
\label{subsec:scalar-extension}

We now relate the algebraic Casimir over $\F_q$ to its geometric counterpart over $\Cinf$.

Base change along the field extension $\F_q \hookrightarrow \Cinf$ identifies
\[
H^0(D, \OO) \cong V_{\f} \otimes_{\F_q} \Cinf, \qquad H^0\left(D, \frac{\OmegaOne(D)}{\OmegaOne}\right) \cong V_{\f}^{\dagger} \otimes_{\F_q} \Cinf.
\]
This defines a canonical scalar extension map
\begin{equation}
\label{eq:inc-def}
\inc : V_{\f} \otimes_{\F_q} V_{\f}^{\dagger} \longrightarrow W_D(\OO) := H^0(D, \OO) \otimes_{\Cinf} H^0\left(D, \frac{\OmegaOne(D)}{\OmegaOne}\right).
\end{equation}

The following lemma gives an explicit local formula for $\inc(\CasFin)$ in terms of uniformizers. It is the tensor form of the base-change lemma for residues (Lemma \ref{lemma:base-change-residues}), and is the only arithmetic input to our main theorem.

\begin{lemma}[Split form of the Casimir]
\label{lemma:split-casimir}
For any choice of uniformizers $u_y$ at the points $y \in \supp{D}$, we have
\begin{equation}
\label{eq:split-casimir}
\inc\left(\CasFin\right) = \sum_{y \in \supp{D}} \sum_{k=0}^{m_y - 1} u_\one^k \otimes \frac{du_{\two}}{u_{\two}^{k+1}}.
\end{equation}
In particular, the right-hand side is independent of the choice of uniformizers.
\end{lemma}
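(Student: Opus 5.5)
The plan is to use the characterization of the Casimir as the identity endomorphism, transported through scalar extension. Since $\CasFin$ corresponds to $\id_{V_{\f}}$ under $V_{\f}\otimes V_{\f}^{\dagger}\cong\End(V_{\f})$ (Lemma~\ref{lem:basis_independence}), its image $\inc(\CasFin)$ should correspond to $\id$ on $H^0(D,\OO)=V_{\f}\otimes_{\F_q}\Cinf$ under the isomorphism $\Phi_D$ of Proposition~\ref{prop:endomorphism-interpretation} (with $\LL=\OO$). It then suffices to show that the right-hand side of \eqref{eq:split-casimir} also maps to the identity under $\Phi_D$. Because $\Phi_D$ is an isomorphism, the two tensors will then coincide. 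Independence from the uniformizers follows at once, since the left-hand side does not involve them.

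\textbf{Step 1 (compatibility of $\inc$ with the pairings).} I will check that the $\Cinf$-bilinear residue pairing on $D$ restricts to $\inc$ of the finite residue pairing $\langle\cdot,\cdot\rangle_{\f}$ on $V_{\f}\times V_{\f}^{\dagger}$. Concretely, for $a\in A$ and $\omega\in\f^{-1}\OmegaOne_A$ I need
\[
\sum_{y\in\supp D}\res_y\bigl((a\omega)_{\Cinf}\bigr)=\sum_{x\in\supp\Df}\Res_x(a\omega).
\]
This is Lemma~\ref{lemma:base-change-residues}, summed over the points $y_1,\dots,y_s$ lying over each $x$. Now take an $\F_q$-basis $\{\alpha_i\}$ with dual basis $\{\alpha_i^\dagger\}$. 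Their base changes form a $\Cinf$-basis of $H^0(D,\OO)$ and a dual $\Cinf$-basis of $H^0(D,\OmegaOne(D)/\OmegaOne)$, with pairings still equal to $\delta_{ij}$. By the basis formula $\Phi_D^{-1}(\id)=\sum_i v_i\otimes\xi_i$ in Proposition~\ref{prop:endomorphism-interpretation}, we get $\inc(\CasFin)=\Phi_D^{-1}(\id)$. One detail to confirm is that the base-change identification $H^0(D,\OO)\cong V_{\f}\otimes\Cinf$ really sends a basis to a basis. This holds because $D\to\Df$ is flat base change of a finite scheme.

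\textbf{Step 2 (local dual bases).} By the $\Cinf$-version of Lemma~\ref{lemma:serre-duality-D}, the space $H^0(D,\OO)$ decomposes as $\prod_{y}\Cinf[u_y]/(u_y^{m_y})$. Within this decomposition, $\{u_y^k\}_{0\le k<m_y}$ and $\{du_y/u_y^{k+1}\}_{0\le k<m_y}$ are dual bases at each $y$. Elements supported at different points pair to zero, since the residue at $y'$ of a product that is regular at $y'$ vanishes. Here $u_y^k$ is understood as the idempotent-localized element, i.e.\ as a function near $y$ and zero near the other points. Hence the union over $y$ of these families is a pair of global dual bases, and the basis formula gives $\Phi_D^{-1}(\id)=\sum_y\sum_k u_{y,\one}^k\otimes du_{y,\two}/u_{y,\two}^{k+1}$. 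Combined with Step 1, this proves \eqref{eq:split-casimir}.

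I expect the main care point to be Step 1, namely the identification of the scalar extension of $V_{\f}^{\dagger}$ with $H^0(D,\OmegaOne(D)/\OmegaOne)$ together with the pairing compatibility. The residue identity itself is exactly Lemma~\ref{lemma:base-change-residues}. The remaining bookkeeping is to ensure that the trace over $\kappa(x)$ turns into the sum over the split points $y_j$ with matching multiplicities $m_{y_j}=m_x$, and that no factor is lost when $\kappa(x)\otimes_{\F_q}\Cinf\cong\prod_j\Cinf$.
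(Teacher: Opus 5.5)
Your proof is correct, but it takes a different route from the paper's.

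The paper argues by direct computation. At a closed point $x$ of degree $s$ it writes the local contribution of $\CasFin$ as $\sum_{l,k}\beta_l v^k\otimes\beta_l^*\,dv/v^{k+1}$, with trace-dual bases $\{\beta_l\},\{\beta_l^*\}$ of $\kappa(x)$. It pushes this through $\kappa(x)\otimes_{\F_q}\Cinf\cong\prod_j\Cinf$ to obtain a double sum over pairs of embeddings $(\sigma_{j'},\sigma_j)$. It then collapses that sum using the matrix identity $\sum_l\sigma_{j'}(\beta_l)\sigma_j(\beta_l^*)=\delta_{j'j}$. This gives the split form only for the uniformizer $v$ coming from $X$. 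Only afterwards is the uniqueness argument (injectivity of $W_D(\OO)\to\End(H^0(D,\OO))$) invoked, to pass to arbitrary uniformizers.

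You use that uniqueness argument for the entire statement. Lemma~\ref{lemma:base-change-residues} shows that the $\Cinf$-residue pairing on $D$ is the scalar extension of $\langle\cdot,\cdot\rangle_{\f}$. Hence base-changed dual bases remain dual and $\inc(\CasFin)=\Phi_D^{-1}(\id)$, which equals the tensor built from any local dual bases $\{u_y^k\}$, $\{du_y/u_y^{k+1}\}$.

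What your version buys:
\begin{itemize}
\item It is shorter and treats arbitrary $\Cinf$-uniformizers in one step.
\item It never needs to present the local factor at $x$ as $\kappa(x)[v]/(v^{m_x})$ or to verify the embedding-matrix identity.
\item It actually proves the compatibility of the two pairings, which the paper only asserts.
\item Since your Step~2 is intrinsic to $D$, the multiplicity bookkeeping you flag at the end is not needed.
\end{itemize}

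In return, the paper's computation gives an explicit picture of how the trace-dual tensors at a non-rational point recombine into one identity block per geometric point. This is the mechanism checked by hand in Section~\ref{subsec:non-split-quadratic}.
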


\begin{proof}
Take a point $x \in \supp{\Df}$ of degree $s$, with uniformizer $v$ and trace-dual bases $\{\beta_l\}$, $\{\beta_l^*\}$ of $\kappa(x) / \F_q$. From the proof of Theorem \ref{thm:perfect-pairing}, the local contribution to the Casimir at $x$ is
\[
\sum_{l=1}^s \sum_{k=0}^{m_x - 1} (\beta_l v^k) \otimes \left( \beta_l^* \frac{dv}{v^{k+1}} \right).
\]

Under base change, each such tensor maps to
\[
\sum_{l=1}^s \sum_{k=0}^{m_x - 1} \sum_{j', j = 1}^s \sigma_{j'}(\beta_l) \sigma_j(\beta_l^*) \, u_{j'}^k \otimes \frac{du_j}{u_j^{k+1}},
\]
where $u_j$ is the uniformizer at the split point $y_j$ (which is just $v$ regarded as an element of $\Cinf[[v]]$).

We now use the matrix identity
\[
\sum_{l=1}^s \sigma_{j'}(\beta_l) \sigma_j(\beta_l^*) = \delta_{j'j}.
\]
This identity follows from the definition of dual bases: for any $\gamma \in \kappa(x)$, we have
\[
\sum_j \sigma_j(\gamma \beta_l^*) = \Tr(\gamma \beta_l^*),
\]
and summing over $l$ gives
\[
\sum_l \sum_j \sigma_j(\beta_l) \sigma_j(\beta_l^*) = \sum_j \sigma_j \left( \sum_l \beta_l \beta_l^* \right) = \sum_j 1 = s,
\]
but more precisely, for fixed $j', j$, the sum over $l$ is the $(j', j)$ entry of the product of the basis matrix and its dual, which is the identity matrix. Indeed, the map sending $\gamma$ to $(\sigma_1(\gamma), \dots, \sigma_s(\gamma))$ embeds $\kappa(x)$ into $\Cinf^s$, and the trace-dual bases map to standard dual bases under this embedding. Hence $\sum_l \sigma_{j'}(\beta_l) \sigma_j(\beta_l^*) = \delta_{j'j}$.

Applying this identity collapses the double sum over $j', j$ to a single sum:
\[
\sum_{j=1}^s \sum_{k=0}^{m_x - 1} u_j^k \otimes \frac{du_j}{u_j^{k+1}}.
\]
Summing over all $x \in \supp{\Df}$ yields the stated formula \eqref{eq:split-casimir}.

The independence of uniformizers follows from the fact that both sides represent the tensor of the identity map for the base-changed pairing. Lemma \ref{lemma:serre-duality-D} shows that for any uniformizer $u$, the bases $\{u^k\}$ and $\{du/u^{k+1}\}$ are dual, so the sum on the right always represents the identity endomorphism. Since the map 
\[
W_D(\OO) \to \End(H^0(D, \OO))
\]
is injective, the element is uniquely determined, hence independent of the choice of uniformizers.
\end{proof}

\begin{remark}
Lemma \ref{lemma:split-casimir} is the tensor incarnation of Lemma \ref{lemma:base-change-residues}. It says that the Casimir, which is defined arithmetically using traces over finite fields, becomes a simple sum of local ``identity tensors'' after base change to the algebraically closed field $\Cinf$. This is the key fact that allows us to compare the algebraic Casimir with the geometric principal part of the Szeg\H{o} kernel.
\end{remark}

\section{Acyclic Bundles and the Szeg\H{o} Kernel}
\label{sec:acyclic-szego}

In this section we move from algebra to geometry. We introduce acyclic line bundles on $X_{\Cinf}$, define the Szeg\H{o} bundle on the product curve, and prove the existence and uniqueness of the normalized Szeg\H{o} kernel. We also derive its transformation law under twisting by a line bundle that is trivial near our finite divisor.

\subsection{Acyclic Line Bundles}
\label{subsec:acyclic-bundles}

\begin{definition}
A line bundle $\LL$ on $X_{\Cinf}$ is called \textbf{acyclic} if
\[
H^0(X_{\Cinf}, \LL) = 0 \quad \text{and} \quad H^1(X_{\Cinf}, \LL) = 0.
\]
\end{definition}

By the Riemann--Roch theorem,
\[
h^0(\LL) - h^1(\LL) = \deg \LL + 1 - g.
\]
If $\LL$ is acyclic, the left-hand side is zero, so $\deg \LL = g - 1$. Thus acyclic line bundles are precisely the degree $g-1$ line bundles that have no nonzero global sections.

Note that $\LL$ is acyclic if and only if its Serre dual $\LL^{-1} \otimes \OmegaOne$ is acyclic. This follows immediately from Serre duality, which gives isomorphisms
\[
H^0(\LL)^{\vee} \cong H^1(\LL^{-1} \otimes \OmegaOne), \qquad H^1(\LL)^{\vee} \cong H^0(\LL^{-1} \otimes \OmegaOne).
\]

We now prove that acyclic line bundles always exist, in every genus.

\begin{proposition}[Existence of acyclic bundles]
Acyclic line bundles exist on $X_{\Cinf}$ for every genus $g \geq 0$. More precisely, the set of acyclic line bundles is
\[
\{ \LL \text{ acyclic} \} = \operatorname{Pic}^{g-1}(X_{\Cinf}) \setminus (\Theta \cup (K - \Theta)),
\]
where $\Theta = \{ \LL : h^0(\LL) \geq 1 \}$ is the theta divisor and $K - \Theta$ is its Serre dual translate. This is a non-empty Zariski-open subset of the $g$-dimensional Picard variety.
\end{proposition}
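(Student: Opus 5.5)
Since $h^0(\LL)-h^1(\LL)=\deg\LL+1-g$ by Riemann--Roch, acyclicity forces $\deg\LL=g-1$. For $\LL$ of degree $g-1$, Serre duality gives $h^1(\LL)=h^0(\LL^{-1}\otimes\OmegaOne)$. Hence $\LL$ is acyclic exactly when $h^0(\LL)=0$ and $h^0(\OmegaOne\otimes\LL^{-1})=0$. The first condition says $\LL\notin\Theta$. The second says $\OmegaOne\otimes\LL^{-1}\notin\Theta$, i.e.\ $\LL\notin K-\Theta$, where $K-\Theta$ denotes the image of $\Theta$ under the involution $\LL\mapsto\OmegaOne\otimes\LL^{-1}$ of $\operatorname{Pic}^{g-1}(X_{\Cinf})$. (In fact Riemann--Roch shows $h^0(\LL)=h^1(\LL)$ in degree $g-1$, so $K-\Theta=\Theta$ set-theoretically; the union is harmless either way.) This gives the set-theoretic description.

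For openness, use upper semicontinuity of $h^0$ in flat families. Concretely, apply it to a Poincar\'e bundle on $X_{\Cinf}\times\operatorname{Pic}^{g-1}(X_{\Cinf})$, which exists after choosing a $\Cinf$-point since $\Cinf$ is algebraically closed. This shows $\Theta$ is Zariski closed. The involution above is an isomorphism of varieties, so $K-\Theta$ is closed too, and the complement is open.

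For non-emptiness, I would argue that $\Theta$ is a proper subset of the irreducible $g$-dimensional variety $\operatorname{Pic}^{g-1}$. It is the image of the map $\operatorname{Sym}^{g-1}X_{\Cinf}\to\operatorname{Pic}^{g-1}$, $E\mapsto\OO(E)$. For $g\ge1$ the source has dimension $g-1<g$, so the image is a proper closed subset. Therefore $\Theta\cup(K-\Theta)$, a union of two proper closed subsets of an irreducible variety, is proper, and the complement is a non-empty open set. This dimension count is the main point of the proof; it needs only that $\operatorname{Pic}^{g-1}$ is a torsor under the Jacobian, hence irreducible of dimension $g$.

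The case $g=0$ is handled separately: $\operatorname{Pic}^{-1}(\Pp^1)=\{\OO(-1)\}$, and $h^0=h^1=0$, so the open set is the whole point. If one prefers avoiding the Poincar\'e bundle, a constructive argument also works for $g\ge1$. Take general points $P_1,\dots,P_g$ with $h^0(\OO(P_1+\dots+P_g))=1$, which holds generically by Riemann--Roch and the Brill--Noether count. Then set $\LL=\OO(P_1+\dots+P_g-Q)$ with $Q$ not a base point of the unique section, giving $h^0(\LL)=0$ directly.
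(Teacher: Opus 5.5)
Your proof is correct and follows essentially the same route as the paper. Both arguments use Serre duality to reduce acyclicity to avoiding $\Theta$ and $K-\Theta$, and then observe that $\Theta$ is the image of $\operatorname{Sym}^{g-1}X_{\Cinf}$, which has dimension $g-1<g$, so the union cannot fill $\operatorname{Pic}^{g-1}(X_{\Cinf})$. Your use of the irreducibility of $\operatorname{Pic}^{g-1}$ (in place of the paper's looser ``finitely many proper closed subsets'' phrasing) and your remark that $K-\Theta=\Theta$ in degree $g-1$ sharpen the argument without changing the strategy.
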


\begin{proof}
By definition, $\LL$ is acyclic iff $h^0(\LL) = 0$ and $h^1(\LL) = 0$. By Serre duality, $h^1(\LL) = h^0(\LL^{-1} \otimes \OmegaOne)$. So acyclicity is equivalent to
\[
\LL \notin \Theta \quad \text{and} \quad \LL^{-1} \otimes \OmegaOne \notin \Theta.
\]
The second condition is equivalent to $\LL \notin K - \Theta$, where $K$ is the canonical divisor class.

Now, $\Theta$ is the image of the Abel--Jacobi map from the $(g-1)$-th symmetric power $X^{(g-1)}$ to $\operatorname{Pic}^{g-1}$. For $g \geq 2$, the dimension of $X^{(g-1)}$ is $g-1$, which is strictly less than $g$, so $\Theta$ is a proper closed subset of the Picard variety. For $g = 1$, $\Theta$ is a single point (the zero divisor class), still proper. For $g = 0$, $\Theta$ is empty because there are no effective divisors of degree $-1$.

Similarly, $K - \Theta$ is also a proper closed subset, being a translate of $\Theta$.

Over an algebraically closed field of infinite cardinality (such as $\Cinf$), a variety of positive dimension cannot be covered by finitely many proper closed subsets. Therefore the complement of $\Theta \cup (K - \Theta)$ is non-empty and Zariski open.
\end{proof}
\begin{example}\label{ex:szego}
For $g = 0$, we can give an explicit example. Take $\LL = \OO(-[P])$ for any $\F_q$-rational point $P$. This bundle has degree $-1 = g-1$, and clearly $h^0 = 0$ since it has negative degree. Then $h^1 = 0$ by Riemann--Roch.
\end{example}
\subsection{The Szeg\H{o} Bundle and the Normalized Szeg\H{o} Kernel}
\label{subsec:szego-bundle-kernel}

We now work on the product curve $X_{\Cinf} \times X_{\Cinf}$, with projections
\[
\prI, \prII : X_{\Cinf} \times X_{\Cinf} \longrightarrow X_{\Cinf}
\]
onto the first and second factors respectively. Let $\DeltaDiag \subset X_{\Cinf} \times X_{\Cinf}$ denote the diagonal subscheme.

\begin{definition}
For any line bundle $\LL$ on $X_{\Cinf}$, define the \textbf{Szeg\H{o} bundle}
\begin{equation}
\label{eq:szego-bundle-def}
\BB_{\LL} = \prI^* \LL \otimes \prII^* \left( \LL^{-1} \otimes \OmegaOne \right).
\end{equation}
We also consider the twisted bundle
\begin{equation}
\label{eq:szego-bundle-twisted-def}
\BB_{\LL}(\DeltaDiag) = \BB_{\LL} \otimes \OO(\DeltaDiag),
\end{equation}
which allows sections with a simple pole along the diagonal.
\end{definition}

\begin{lemma}\label{lem:bundle}
The restriction of $\BB_{\LL}(\DeltaDiag)$ to the diagonal $\DeltaDiag$ is canonically trivial. 
\end{lemma}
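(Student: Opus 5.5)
We need $\BB_{\LL}(\DeltaDiag)|_\DeltaDiag \cong \OO_\DeltaDiag$ canonically. We compute the restriction factor by factor, using the diagonal identification $\DeltaDiag\cong X_{\Cinf}$ via either projection; both $\prI$ and $\prII$ restrict to this same isomorphism.

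First, the pullbacks restrict to $\LL$ and $\LL^{-1}\otimes\OmegaOne$, and their product is $\LL\otimes\LL^{-1}\otimes\OmegaOne\cong\OmegaOne$. This uses the evaluation pairing $\LL\otimes\LL^{-1}\to\OO$, which is canonical. So
\[
\BB_{\LL}|_{\DeltaDiag}\cong\OmegaOne_{X_{\Cinf}}.
\]

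Second, $\OO(\DeltaDiag)|_\DeltaDiag$ is the normal bundle $N_{\DeltaDiag}$, by adjunction for the effective Cartier divisor $\DeltaDiag$ on the smooth surface. For the diagonal, the conormal bundle $\mathcal I_\DeltaDiag/\mathcal I_\DeltaDiag^2$ is canonically $\OmegaOne_{X_{\Cinf}}$, via $f\otimes 1-1\otimes f\mapsto df$. This is the standard definition of Kähler differentials through the diagonal ideal. Hence $\OO(\DeltaDiag)|_\DeltaDiag\cong(\OmegaOne)^{-1}$, and tensoring gives the canonical trivialization
\[
\BB_{\LL}(\DeltaDiag)|_\DeltaDiag\cong\OmegaOne\otimes(\OmegaOne)^{-1}\cong\OO_\DeltaDiag.
\]

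To make this concrete, and to check it is the normalization used for "diagonal residue $1$", we describe it in local coordinates. Take a local coordinate $u$ near a point and a local frame $e$ of $\LL$. A local section of $\BB_{\LL}(\DeltaDiag)$ has the form
\[
h(u_\one,u_\two)\,\frac{e(\one)\otimes e^{-1}(\two)\,du_\two}{u_\two-u_\one},
\]
and its restriction to the diagonal is $h(u,u)$. We then verify that this is independent of $e$ and $u$.

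Changing the frame $e\mapsto ge$ multiplies the section by $g(u_\one)/g(u_\two)$, which equals $1$ on $\DeltaDiag$.

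Changing the coordinate $u\mapsto v$ multiplies by
\[
\frac{dv_\two/du_\two}{(v_\two-v_\one)/(u_\two-u_\one)},
\]
which is a regular unit near $\DeltaDiag$ and tends to $v'(u)/v'(u)=1$ on the diagonal.

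The main subtlety is sign and convention consistency: the value must equal $\res_{u_\two=u_\one}$ of the section. This holds because the pole factor $1/(u_\two-u_\one)$ is taken in the second variable, matching the $du_\two$ in $\BB_{\LL}$. Identifying the conormal isomorphism with $u_\two-u_\one\mapsto du$ fixes the sign, and no extra $-1$ appears.
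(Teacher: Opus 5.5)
Your proof is correct and follows the paper's route exactly: $\BB_{\LL}|_{\DeltaDiag}\cong\LL\otimes\LL^{-1}\otimes\OmegaOne\cong\OmegaOne$ and $\OO(\DeltaDiag)|_{\DeltaDiag}\cong N_{\DeltaDiag}\cong(\OmegaOne)^{-1}$, then tensor; your conormal identification and local-coordinate checks fill in details the paper leaves implicit. One small sign inconsistency: since the first tensor factor is the first variable, your stated map $f\otimes 1-1\otimes f\mapsto df$ sends $u_\one-u_\two\mapsto du$ and would produce $-h(u,u)$, so matching $\res_{\DeltaDiag}$ without a sign requires the convention $1\otimes f-f\otimes 1\mapsto df$ (i.e.\ $u_\two-u_\one\mapsto du$), which is the one your final paragraph actually uses.
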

\begin{proof}
Note that the restriction of $\BB_{\LL}$ to $\DeltaDiag$ is $\LL \otimes (\LL^{-1} \otimes \OmegaOne) \cong \OmegaOne$.
The restriction of $\OO(\DeltaDiag)$ to $\DeltaDiag$ is the normal bundle $N_{\DeltaDiag}$, which is isomorphic to the tangent bundle $T \cong (\OmegaOne)^{-1}$.
Therefore, their tensor product is $\OmegaOne \otimes (\OmegaOne)^{-1} \cong \OO_{\DeltaDiag}$.
\end{proof}

Lemma \ref{lem:bundle} gives rise to the diagonal residue exact sequence:
\begin{equation}
\label{eq:diagonal-residue-sequence}
0 \longrightarrow \BB_{\LL} \longrightarrow \BB_{\LL}(\DeltaDiag) \stackrel{\res_{\DeltaDiag}}{\longrightarrow} \OO_{\DeltaDiag} \longrightarrow 0.
\end{equation}
Locally, near a point $(y,y)$ on the diagonal, if $u$ is a local coordinate and $e$ is a local frame for $\LL$, then a section of the form
\[
\frac{\sigma \, du_{\two}}{u_{\two} - u_\one} \, e(\one) \otimes e(\two)^{-1}
\]
has diagonal residue equal to $\sigma|_{\DeltaDiag}$.

We now come to the central geometric object of this paper.

\begin{proposition}[Existence and uniqueness of the Szeg\H{o} kernel]
\label{prop:szego-existence}
Let $\LL$ be an acyclic line bundle on $X_{\Cinf}$. There exists a unique global section
\[
S_{\LL} \in H^0(X_{\Cinf} \times X_{\Cinf}, \BB_{\LL}(\DeltaDiag))
\]
such that $\res_{\DeltaDiag}(S_{\LL}) = 1$. This section is called the \textbf{normalized Szeg\H{o} kernel} of $\LL$.

Locally near any diagonal point $(y,y)$, in any uniformizer $u$ and any local frame $e$ of $\LL$ near $y$, the Szeg\H{o} kernel admits the expansion
\begin{equation}
\label{eq:szego-local-expansion}
S_{\LL} = \frac{du_{\two}}{u_{\two} - u_\one} \, e(\one) \otimes e(\two)^{-1} + \epsilon,
\end{equation}
where $\epsilon$ is a holomorphic section of $\BB_{\LL}$ (i.e., regular along the diagonal).
\end{proposition}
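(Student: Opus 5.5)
The plan is to take global sections of the diagonal residue sequence \eqref{eq:diagonal-residue-sequence} and use the Künneth formula together with acyclicity of $\LL$. Passing to the long exact sequence in cohomology on $X_{\Cinf}\times X_{\Cinf}$ gives
\[
0\to H^0(\BB_{\LL})\to H^0(\BB_{\LL}(\DeltaDiag))\xrightarrow{\res_{\DeltaDiag}} H^0(\DeltaDiag,\OO_{\DeltaDiag})\to H^1(\BB_{\LL}).
\]
Since $\BB_{\LL}=\prI^*\LL\otimes\prII^*(\LL^{-1}\otimes\OmegaOne)$ is an external tensor product of sheaves on a product of projective varieties over a field, Künneth gives
\[
H^n(\BB_{\LL})=\bigoplus_{i+j=n}H^i(X_{\Cinf},\LL)\otimes H^j(X_{\Cinf},\LL^{-1}\otimes\OmegaOne).
\]
Both factors are acyclic: $\LL$ by hypothesis, and $\LL^{-1}\otimes\OmegaOne$ by the Serre duality remark after the definition of acyclicity. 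Hence $H^0(\BB_{\LL})=H^1(\BB_{\LL})=0$. It follows that $\res_{\DeltaDiag}$ is an isomorphism onto $H^0(\DeltaDiag,\OO_{\DeltaDiag})\cong H^0(X_{\Cinf},\OO)=\Cinf$. The constant $1$ therefore has a unique preimage, and this preimage is $S_{\LL}$. This establishes both existence and uniqueness.

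For the local expansion, I fix a diagonal point $(y,y)$, a uniformizer $u$ on a neighbourhood $V$ of $y$, and a frame $e$ of $\LL$ on $V$. On $V\times V$ the diagonal is cut out by $u_{\two}-u_{\one}$, provided $V$ is shrunk so that $u$ is étale and $u_{\two}-u_{\one}$ defines exactly $\DeltaDiag$ there. Consequently $\BB_{\LL}(\DeltaDiag)$ is trivialized there by
\[
\frac{du_{\two}}{u_{\two}-u_{\one}}\,e(\one)\otimes e(\two)^{-1},
\]
so I can write $S_{\LL}=\sigma\cdot\frac{du_{\two}}{u_{\two}-u_{\one}}\,e(\one)\otimes e(\two)^{-1}$ with $\sigma$ regular. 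By the local description of $\res_{\DeltaDiag}$ stated just before the proposition, the diagonal residue of $S_{\LL}$ is $\sigma|_{\DeltaDiag}$, which must equal $1$. Hence $\sigma-1$ vanishes on $\DeltaDiag$, so $\sigma-1=(u_{\two}-u_{\one})\tau$ for some regular $\tau$. Setting $\epsilon=\tau\,du_{\two}\,e(\one)\otimes e(\two)^{-1}$ gives a regular section of $\BB_{\LL}$ near $(y,y)$, which is exactly \eqref{eq:szego-local-expansion}.

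The main point requiring care is the compatibility between the canonical trivialization of Lemma \ref{lem:bundle} and the local formula for $\res_{\DeltaDiag}$: one must check that the identification $N_{\DeltaDiag}\cong T$ sends $\frac{du_{\two}}{u_{\two}-u_{\one}}$ to $1$ independently of the chosen $u$. I would verify this directly. If $u'$ is another coordinate, then $\frac{du'_{\two}}{u'_{\two}-u'_{\one}}-\frac{du_{\two}}{u_{\two}-u_{\one}}$ is regular along the diagonal, because $u'_{\two}-u'_{\one}=(u_{\two}-u_{\one})\cdot\frac{du'}{du}\big|$ modulo $(u_{\two}-u_{\one})^2$. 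Similarly, the factor $e(\one)\otimes e(\two)^{-1}$ restricts to $1$ on the diagonal, so changing the frame does not affect the residue. Once this is checked, the rest is formal. Künneth in the coherent setting is standard for quasi-coherent external products over a field, and I would simply cite it.
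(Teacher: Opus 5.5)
Your proposal is correct and is essentially the paper's proof: you take the long exact sequence of \eqref{eq:diagonal-residue-sequence}, use K\"unneth together with acyclicity of $\LL$ and of $\LL^{-1}\otimes\OmegaOne$ to kill $H^0(\BB_{\LL})$ and $H^1(\BB_{\LL})$, and then write $S_{\LL}=\sigma\,\frac{du_{\two}}{u_{\two}-u_{\one}}\,e(\one)\otimes e(\two)^{-1}$ with $\sigma|_{\DeltaDiag}=1$; your factorization $\sigma-1=(u_{\two}-u_{\one})\tau$ spells out what the paper means by ``absorbing $f-1$ into $\epsilon$'', and your coordinate- and frame-independence check of the local residue formula is a point the paper takes for granted. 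One phrasing slip: you cannot in general shrink $V$ so that $u_{\two}-u_{\one}$ cuts out only $\DeltaDiag$ on the product $V\times V$ (once $\deg u\ge 2$, which is unavoidable in positive genus, the off-diagonal components of $\{u_{\one}=u_{\two}\}$ meet every such product), but since $u$ is unramified at $y$ these components miss $(y,y)$, so your argument goes through verbatim on a non-product open neighbourhood of $(y,y)$.
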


\begin{proof}
Consider the long exact cohomology sequence associated to the diagonal residue short exact sequence \eqref{eq:diagonal-residue-sequence}:
\[
\cdots \to H^0(\BB_{\LL}) \to H^0(\BB_{\LL}(\DeltaDiag)) \stackrel{\res_{\DeltaDiag}}{\longrightarrow} H^0(\OO_{\DeltaDiag}) \to H^1(\BB_{\LL}) \to \cdots.
\]

We claim that $H^0(\BB_{\LL}) = 0$ and $H^1(\BB_{\LL}) = 0$. By the K\"unneth formula,
\[
H^i(\BB_{\LL}) = \bigoplus_{p+q = i} H^p(\LL) \otimes H^q(\LL^{-1} \otimes \OmegaOne).
\]
Since $\LL$ is acyclic, $H^0(\LL) = H^1(\LL) = 0$. By Serre duality, $H^0(\LL^{-1} \otimes \OmegaOne) = H^1(\LL)^{\vee} = 0$ and $H^1(\LL^{-1} \otimes \OmegaOne) = H^0(\LL)^{\vee} = 0$. Therefore all K\"unneth summands vanish for $i = 0, 1$.

Thus the long exact sequence gives an isomorphism
\[
H^0(\BB_{\LL}(\DeltaDiag)) \stackrel{\res_{\DeltaDiag}}{\xrightarrow{\sim}} H^0(\OO_{\DeltaDiag}) = \Cinf.
\]
Hence there is a unique section of $\BB_{\LL}(\DeltaDiag)$ whose diagonal residue equals $1 \in \Cinf$. This is the normalized Szeg\H{o} kernel.

For the local expansion, write
\[
S_{\LL} = \frac{f \, du_{\two}}{u_{\two} - u_\one} \, e(\one) \otimes e(\two)^{-1} + \text{regular terms},
\]
where $f$ is a holomorphic function. By definition, the diagonal residue is $f|_{\DeltaDiag}$. Since the residue equals $1$, we have $f|_{\DeltaDiag} = 1$. We can absorb $f - 1$ into the regular term $\epsilon$, which yields the stated local form \eqref{eq:szego-local-expansion}.
\end{proof}

\subsection{Twisting Property of Szeg\H{o} Kernels}
\label{subsec:twisting-property}

The Szeg\H{o} kernel transforms in a simple way when the line bundle is twisted by another line bundle that is trivial on some open set. This property will be crucial in the proof of the main theorem, where it will allow us to reduce the computation to a single choice of line bundle.

\begin{proposition}[Twisting formula]
\label{prop:twisting-szego}
Let $\LL_1$ and $\LL_2$ be acyclic line bundles, and let $\MM = \LL_2 \otimes \LL_1^{-1}$. Suppose $\MM$ is trivial on an open subset $V \subseteq X_{\Cinf}$, and let $m$ be a nowhere-vanishing section of $\MM$ on $V$. Then on $V \times V$,
\begin{equation}
\label{eq:twisting-szego}
S_{\LL_2} = \frac{m(\one)}{m(\two)} \, S_{\LL_1}.
\end{equation}
More generally, if $m$ trivializes $\MM$ only on an open neighbourhood $V \supseteq \supp{D}$, then the identity holds on $V \times V$ up to a section of $\BB_{\LL_2}$ that is holomorphic there.
\end{proposition}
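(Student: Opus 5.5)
The plan is to use the uniqueness part of Proposition~\ref{prop:szego-existence}. I will compare $S_{\LL_2}$ with the candidate
\[
T := \frac{m(\one)}{m(\two)}\, S_{\LL_1},
\]
show that the two have the same diagonal residue, and then try to show that they agree on $V\times V$.

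\textbf{Step 1: $T$ lies in the right bundle.} Since $m$ is a nowhere-vanishing section of $\MM=\LL_2\otimes\LL_1^{-1}$ on $V$, tensoring with $m(\one)$ carries $\prI^*\LL_1$ to $\prI^*\LL_2$ over $V\times V$. Tensoring with $m(\two)^{-1}$ carries $\prII^*(\LL_1^{-1}\otimes\OmegaOne)$ to $\prII^*(\LL_2^{-1}\otimes\OmegaOne)$. Hence multiplication by $m(\one)/m(\two)$ is an isomorphism
\[
\BB_{\LL_1}(\DeltaDiag)|_{V\times V}\xrightarrow{\sim}\BB_{\LL_2}(\DeltaDiag)|_{V\times V},
\]
and it preserves the subsheaf of sections regular along the diagonal. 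So $T$ is a section of $\BB_{\LL_2}(\DeltaDiag)$ over $V\times V$.

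\textbf{Step 2: the diagonal residues agree.} Take a frame $e_1$ of $\LL_1$ near a point $y\in V$ and set $e_2=m\,e_1$, a frame of $\LL_2$. Substituting the local expansion \eqref{eq:szego-local-expansion} of $S_{\LL_1}$ gives
\[
T=\frac{du_\two}{u_\two-u_\one}\,e_2(\one)\otimes e_2(\two)^{-1}+(\text{regular}).
\]
Thus $\res_{\DeltaDiag}(T)=1$, because the factor $m(\one)/m(\two)$ restricts to $1$ on $\DeltaDiag$. It follows that $\delta:=S_{\LL_2}-T$ has zero diagonal residue. By the exact sequence \eqref{eq:diagonal-residue-sequence}, $\delta$ is a section of $\BB_{\LL_2}$ over $V\times V$ with no pole along the diagonal.

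\textbf{Step 3: showing $\delta=0$ (the main obstacle).} On the projective surface one would conclude $\delta\in H^0(\BB_{\LL_2})=0$ by the K\"unneth argument in the proof of Proposition~\ref{prop:szego-existence}. Over the open set $V\times V$ this argument does not apply directly. I would proceed as follows.
\begin{itemize}
\item Regard $m$ as a rational section of $\MM$ whose divisor $\operatorname{div}(m)$ is supported in $X_{\Cinf}\setminus V$. Then $T$, and hence $\delta$, is a rational section of $\BB_{\LL_2}$ on the whole product, with poles contained in the fibres $\prI^{-1}(\operatorname{div} m)\cup\prII^{-1}(\operatorname{div} m)$.
\item Fix a point $y$ of $V$. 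The restriction $\delta(\cdot,y)$ is a rational section of $\LL_2$, regular on $V$, with poles only along $\operatorname{div}(m)$.
\item Fibrewise acyclicity of $\LL_2$ together with the global regularity of $S_{\LL_1}$ should force these polar parts to cancel against the corresponding polar parts of $S_{\LL_2}$, which has no such poles.
\end{itemize}
This cancellation is exactly where the hypothesis on $m$ must be used, and I expect it to be the genuine difficulty. If the poles of $m$ cannot be controlled in this way, the argument still yields that $\delta$ is regular on $V\times V$. That is precisely the weaker conclusion asserted in the last sentence of the proposition, and it suffices for principal parts along $D$ whenever $V\supseteq\supp D$.
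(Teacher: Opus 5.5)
Your Steps 1 and 2 are the paper's argument. The paper also observes that $\frac{m(\one)}{m(\two)}S_{\LL_1}$ is a section of $\BB_{\LL_2}(\DeltaDiag)$ over $V\times V$ with diagonal residue $\frac{m}{m}\cdot 1=1$, so that $\delta$ is a section of $\BB_{\LL_2}$ there. It then finishes in two cases, and neither goes beyond what you already have. If $V=X_{\Cinf}$, i.e.\ $m$ is a global trivialization of $\MM$, then $\delta\in H^0(X_{\Cinf}\times X_{\Cinf},\BB_{\LL_2})=0$ by the K\"unneth--acyclicity computation; this is the remark that opens your Step 3. If $V$ is only a neighbourhood of $\supp{D}$, the paper claims nothing beyond holomorphy of $\delta$ on $V\times V$. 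That is your fallback, and it is all that is used later for principal parts.

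You should drop the cancellation heuristic in Step 3. It cannot be completed, because the exact identity on $V\times V$ is false whenever $m$ does not extend to a global trivialization. Such an $m$, viewed as a rational section of the degree-zero bundle $\MM$, has a nonzero divisor of degree $0$. Hence it has a pole at some $z$, say $\operatorname{ord}_z(m)=-k<0$. Consider the orders along the curve $C=\{z\}\times X_{\Cinf}$:
\begin{itemize}
\item the factor $m(\one)$ has order $-k$;
\item the factor $m(\two)^{-1}$ has order $0$, since its divisor is supported on curves $X_{\Cinf}\times\{w\}$;
\item $S_{\LL_1}$ has order $0$, since near $(z,z)$ one has $S_{\LL_1}=\sigma\,\frac{du_\two}{u_\two-u_\one}\,e(\one)\otimes e(\two)^{-1}$ with $\sigma(z,z)=1$.
\end{itemize}
So $T$ has a genuine pole along $C$, while $S_{\LL_2}$ is regular there, and there is nothing in $S_{\LL_2}$ for that pole to cancel against. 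Rational sections that agree on the dense open set $V\times V$ agree everywhere, so $\delta\neq 0$ on $V\times V$.

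In particular, if $\LL_1\not\cong\LL_2$, the exact identity holds for no choice of $(V,m)$. For a concrete instance, take the elliptic example of Section~\ref{sec:ex-elliptic} with $\LL_1=\OO(P-\infty)$, $\LL_2=\OO(Q-\infty)$, $P\neq Q$ and $m=1$. In the frame $e=1$ used there, $\delta=\bigl(C_X(\one,Q)-C_X(\one,P)\bigr)\omega_X(\two)\neq 0$. Thus the first assertion of the proposition holds only when $m$ is the restriction of a global trivialization (e.g.\ $V=X_{\Cinf}$), which is how the paper's proof treats it. Otherwise only the weak statement is true, and your argument already proves it.
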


\begin{proof}
The right-hand side $\frac{m(\one)}{m(\two)} S_{\LL_1}$ is a section of
\[
\prI^* \MM \otimes \prII^* \MM^{-1} \otimes \BB_{\LL_1}(\DeltaDiag) = \BB_{\LL_2}(\DeltaDiag)
\]
on $V \times V$. Its diagonal residue is
\[
\res_{\DeltaDiag}\left( \frac{m(\one)}{m(\two)} S_{\LL_1} \right) = \frac{m}{m} \cdot \res_{\DeltaDiag}(S_{\LL_1}) = 1 \cdot 1 = 1.
\]

Thus both $S_{\LL_2}$ and $\frac{m(\one)}{m(\two)} S_{\LL_1}$ are sections of $\BB_{\LL_2}(\DeltaDiag)$ on $V \times V$ with diagonal residue 1. Their difference is a section of $\BB_{\LL_2}$ (holomorphic, no pole along the diagonal).

In the global case (where $V = X_{\Cinf}$), we have $H^0(\BB_{\LL_2}) = 0$ by acyclicity, so the difference must be zero, and the identity \eqref{eq:twisting-szego} holds exactly.

In the local case (where $V$ is only a neighbourhood of $\supp{D}$), we do not have vanishing of global sections, but we only need the fact that the difference is holomorphic on $V \times V$. This will be sufficient for our applications to principal parts, since holomorphic sections have zero principal part.
\end{proof}

\section{The Intrinsic Principal Part}
\label{sec:intrinsic-pp}

This section contains the technical heart of the paper: a coordinate-free definition of the principal part of a section along a finite divisor $D$. Previous treatments defined principal parts by truncating Laurent series in local coordinates, which then required separate lemmas to prove independence of coordinates and frames. Our intrinsic definition, by contrast, realizes the principal part as the class of a restriction in a natural quotient space. All invariance properties then follow formally from functoriality.

\subsection{Definition of the Intrinsic Principal Part}
\label{subsec:def-intrinsic-pp}

Let $\LL$ be a line bundle on $U_{\Cinf}$, and let $U \supseteq \supp{D}$ be an affine open neighbourhood of the support of $D$. Consider the product $D \times U$, which is a finite scheme over $U$ via the second projection.

On $D \times U$, we have two natural sheaves:
\begin{itemize}
\item $\prI^* (\LL|_D)$, the pullback of the restricted line bundle from the first factor;
\item $\prII^* ((\LL^{-1} \otimes \OmegaOne)(D))$, the pullback of the twisted differential sheaf from the second factor, which allows poles of order up to the multiplicity of $D$.
\end{itemize}

There is a natural subsheaf
\[
\prII^* (\LL^{-1} \otimes \OmegaOne) \subseteq \prII^* ((\LL^{-1} \otimes \OmegaOne)(D)),
\]
consisting of sections with no poles along $D$.

\begin{definition}[Intrinsic principal part space]
Define the \textbf{principal part space}
\begin{equation}
\label{eq:intrinsic-pp-space}
\overline{W}_D(\LL) = \frac{H^0\left(D \times U, \, \prI^* (\LL|_D) \otimes \prII^* ((\LL^{-1} \otimes \OmegaOne)(D))\right)}{H^0\left(D \times U, \, \prI^* (\LL|_D) \otimes \prII^* (\LL^{-1} \otimes \OmegaOne)\right)}.
\end{equation}
\end{definition}

Now let $s \in H^0(U \times U, \BB_{\LL}(\DeltaDiag))$ be a section defined on $U \times U$. We can restrict $s$ to the closed subscheme $D \times U \subseteq U \times U$. Since $s$ has at most a simple pole along the diagonal, and $D$ is a finite divisor, the restriction $s|_{D \times U}$ defines a section of the numerator sheaf above. Indeed, the simple pole along the diagonal becomes a pole of order at most $m_y$ in the second variable when restricted to $D \times \{y\}$.

\begin{definition}[Intrinsic principal part]
\label{def:intrinsic-pp}
For $s \in H^0(U \times U, \BB_{\LL}(\DeltaDiag))$, define its \textbf{intrinsic principal part along $D$} by
\begin{equation}
\label{eq:intrinsic-pp-def}
\PP_D(s) = \left[ s|_{D \times U} \right] \in \overline{W}_D(\LL),
\end{equation}
the class of the restriction in the quotient space.
\end{definition}

When no confusion is likely to arise, we will identify $\overline{W}_D(\LL)$ with the ``model'' space
\[
W_D(\LL) = H^0(D, \LL|_D) \otimes_{\Cinf} H^0\left(D, \frac{(\LL^{-1} \otimes \OmegaOne)(D)}{\LL^{-1} \otimes \OmegaOne}\right)
\]
via the canonical K\"unneth isomorphism (proved in Lemma \ref{lemma:pp-basic-properties} below). This identifies our intrinsic principal part with the more familiar tensor-product form.

\subsection{Basic Properties of the Principal Part}
\label{subsec:basic-pp-properties}

We now derive the fundamental properties of the intrinsic principal part. All of these follow directly from the definition and basic algebraic geometry.

\begin{lemma}[Basic properties of $\PP_D$]
\label{lemma:pp-basic-properties}
Let $s \in H^0(U \times U, \BB_{\LL}(\DeltaDiag))$.
\begin{enumerate}
\item[(i)] \textbf{Well-definedness and exact truncation.} The restriction $s|_{D \times U}$ is indeed a section of the numerator sheaf. Locally at $y \in \supp{D}$ with multiplicity $m = m_y$ and uniformizer $u$, the identity
\begin{equation}
\label{eq:exact-truncation}
\left. \frac{1}{u_{\two} - u_\one} \right|_{D \times U} = \frac{\sum_{j < m} u_{\two}^{m-1-j} u_\one^j}{u_{\two}^m} = \sum_{j=0}^{m-1} \frac{u_\one^j}{u_{\two}^{j+1}}
\end{equation}
holds exactly on $D \times U$, with no convergence issues.

\item[(ii)] \textbf{Identification with the model space.} Since $D$ is finite and $U$ is affine, there is a canonical isomorphism
\[
\overline{W}_D(\LL) \cong W_D(\LL),
\]
compatible with shrinking $U$.

\item[(iii)] \textbf{Regularity.} If $s$ is a section of the denominator sheaf (i.e., holomorphic in the second variable along $D$), then $\PP_D(s) = 0$.

\item[(iv)] \textbf{Functoriality.} An isomorphism $\varphi : \LL_1 \xrightarrow{\sim} \LL_2$ on a neighbourhood of $\supp{D}$ induces an isomorphism $\varphi_* : \overline{W}_D(\LL_1) \cong \overline{W}_D(\LL_2)$ such that
\[
\varphi_*(\PP_D(s)) = \PP_D(\varphi(s)).
\]
When $\varphi$ is a frame trivialization $e : \OO_U \xrightarrow{\sim} \LL|_U$, this recovers the compatibility between principal parts and $e$-coefficients:
\[
\UpsilonFrame(\PP_D(s)) = \PP_D(\Upsilon_e^U(s)).
\]

\item[(v)] \textbf{Local shape.} For the normalized Szeg\H{o} kernel $S_{\LL}$, in any frame $e$ and any choice of uniformizers $u_y$ at points $y \in \supp{D}$, we have
\begin{equation}
\label{eq:pp-local-shape}
\PP_D(S_{\LL}^e) = \sum_{y \in \supp{D}} \sum_{k=0}^{m_y - 1} u_\one^k \otimes \frac{du_{\two}}{u_{\two}^{k+1}}.
\end{equation}
\end{enumerate}
\end{lemma}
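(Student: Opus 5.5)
The plan is to prove the five items in order. Item (ii) supplies the model space in which (v) is computed, and (iii) and (iv) are formal once (i) and (ii) are in place.

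For (i), I would work locally at $y\in\supp D$ with uniformizer $u$. On $D\times U$ the function $u_\one$ satisfies $u_\one^{m}=0$. Hence the algebraic identity
\[
u_\two^m-u_\one^m=(u_\two-u_\one)\sum_{j<m}u_\two^{m-1-j}u_\one^j
\]
becomes
\[
u_\two^m=(u_\two-u_\one)\sum_{j<m}u_\two^{m-1-j}u_\one^j .
\]
So the equation of the diagonal divides $u_\two^m$ on $D\times U$. A section with a simple pole along $\DeltaDiag$ therefore restricts to a section with a pole of order at most $m$ in the second variable along $y$, and near the other points of $\supp D$ it has no pole at all, since the diagonal does not meet $D\times\{y'\}$ there. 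Away from $\supp D$ in the second factor the restriction is regular. Dividing the identity by $u_\two^m$ gives \eqref{eq:exact-truncation} as an exact identity in the finite algebra $\OO_D\otimes\OO(U)[u_\two^{-1}]$, so no convergence question arises.

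For (ii), I would use that $D$ is finite over $\Cinf$ and $U$ is affine. Global sections of $\prI^*\FF\otimes\prII^*\GG$ on $D\times U$ are then $H^0(D,\FF)\otimes_{\Cinf}H^0(U,\GG)$. This is Künneth in degree $0$ for affine schemes, which amounts to the tensor product of modules over $\Cinf$. Tensoring over $\Cinf$ with the vector space $H^0(D,\LL|_D)$ is exact, so the quotient in \eqref{eq:intrinsic-pp-space} becomes
\[
H^0(D,\LL|_D)\otimes \frac{H^0(U,(\LL^{-1}\otimes\OmegaOne)(D))}{H^0(U,\LL^{-1}\otimes\OmegaOne)}.
\]
Because $U$ is affine, $H^1(U,\LL^{-1}\otimes\OmegaOne)=0$. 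The long exact sequence then identifies the second factor with $H^0(D,(\LL^{-1}\otimes\OmegaOne)(D)/(\LL^{-1}\otimes\OmegaOne))$, which is a skyscraper at $\supp D$. This identification is visibly compatible with restriction to a smaller affine $U'\supseteq\supp D$.

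Item (iii) is immediate: the restriction of such an $s$ lies in the denominator, so its class is zero. For (iv), $\varphi$ induces $\varphi|_D$ on the first factor and $(\varphi^{-1})^\vee\otimes\id$ on the second. These maps preserve the numerator and the denominator, and they commute with restriction to $D\times U$, so $\varphi_*$ is well defined and $\varphi_*\PP_D(s)=\PP_D(\varphi(s))$ holds. The frame statement is the case $\varphi=e$.

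For (v), I would combine (iv) with the local expansion \eqref{eq:szego-local-expansion}. Shrink $U$ so that $e$ trivializes $\LL$ there and, using (ii), work one point $y$ at a time. In the frame, $S_\LL^e=du_\two/(u_\two-u_\one)+\epsilon$ with $\epsilon$ regular. The term $\epsilon$ contributes nothing by (iii), and (i) turns the first term into $\sum_{k<m_y}u_\one^k\otimes du_\two/u_\two^{k+1}$.

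The main obstacle is that $u$ is only a local coordinate near $y$, while the expansion \eqref{eq:szego-local-expansion} holds only near $(y,y)$. The fix is to note that the class in the model space depends only on the germ of the restriction near $D\times\supp D$ in the second factor, because the quotient is a skyscraper there. So I would pass to a neighbourhood $U_y$ of $y$ small enough that $u$ is a coordinate, the expansion holds on $U_y\times U_y$, and no other point of $\supp D$ lies in $U_y$. The compatibility with shrinking from (ii) then justifies the localization, and summing the contributions over $y\in\supp D$ gives \eqref{eq:pp-local-shape}.
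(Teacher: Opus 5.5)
Your proposal is correct and follows the paper's own proof essentially step for step: the factorization $u_\two^m=(u_\two-u_\one)\sum_{j<m}u_\two^{m-1-j}u_\one^j$ on $D\times U$ for (i), K\"unneth on the affine scheme $D\times U$ together with $H^1(U,\LL^{-1}\otimes\OmegaOne)=0$ for (ii), formal arguments for (iii) and (iv), and the local expansion of $S_\LL$ combined with (iii) and the truncation identity for (v). Your localization in (v) is more careful than the paper's, with one small correction: in the Zariski topology the expansion holds on a neighbourhood of the point $(y,y)$ but generally not on a product $U_y\times U_y$ (for $g\ge 1$ a uniformizer is never injective on a Zariski open set, so $u_\two-u_\one$ also vanishes off the diagonal and $\epsilon$ picks up poles there); this is harmless, because such a neighbourhood meets $m_y y\times U$ in some $m_y y\times U_y'$, which is all your germ argument needs.
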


\begin{proof}
(i) Near a diagonal point $(y,y)$, write $s = \sigma \, du_{\two} / (u_{\two} - u_\one)$ with $\sigma$ holomorphic. We need to show that $1/(u_{\two} - u_\one)$, when restricted to $D \times U$ (where $u_\one^m = 0$), has a pole of order at most $m$ in $u_{\two}$.

The algebraic identity
\[
X^m - Y^m = (X - Y) \sum_{j=0}^{m-1} X^j Y^{m-1-j}
\]
implies
\[
\frac{1}{Y - X} = \frac{\sum_{j=0}^{m-1} X^j Y^{m-1-j}}{Y^m - X^m}.
\]
On $D \times U$, we have $X^m = u_\one^m = 0$, so this simplifies to
\[
\frac{1}{u_{\two} - u_\one} = \frac{\sum_{j=0}^{m-1} u_\one^j u_{\two}^{m-1-j}}{u_{\two}^m} = \sum_{j=0}^{m-1} \frac{u_\one^j}{u_{\two}^{j+1}},
\]
which is \eqref{eq:exact-truncation}. This is an exact algebraic identity, valid in any characteristic, with no convergence issues. Away from diagonal points, $s|_{D \times U}$ is holomorphic in the second variable, so it contributes nothing to the principal part.

(ii) By K\"unneth, since $D$ is finite and $U$ is affine, we have
\[
H^0(D \times U, \prI^* \FF \otimes \prII^* \GG) \cong H^0(D, \FF) \otimes H^0(U, \GG)
\]
for any coherent sheaves $\FF$ on $D$ and $\GG$ on $U$. Applying this to both numerator and denominator, and using the fact that the restriction map $H^0(U, \GG(D)) \to H^0(D, \GG(D)/\GG)$ is surjective (by affineness), we obtain the canonical isomorphism.

(iii) Immediate from the definition: sections of the denominator represent the zero class in the quotient.

(iv) The isomorphism $\varphi$ preserves pole orders on both factors, so it maps numerator to numerator and denominator to denominator, hence induces a map on the quotient. It commutes with restriction to $D \times U$, so it intertwines the principal part maps. The frame case is just the special case where $\LL_1 = \OO$ and $\LL_2 = \LL$.

(v) From Proposition \ref{prop:szego-existence}, the Szeg\H{o} kernel has the local form
\[
S_{\LL}^e = \frac{du_{\two}}{u_{\two} - u_\one} + \epsilon,
\]
where $\epsilon$ is holomorphic (regular in both variables). By part (iii), the regular term $\epsilon$ has zero principal part. Applying the exact truncation formula \eqref{eq:exact-truncation} to the singular term yields \eqref{eq:pp-local-shape}.
\end{proof}

\begin{remark}
The exact truncation identity \eqref{eq:exact-truncation} replaces the geometric-series-plus-truncation argument used in earlier treatments. On $D \times U$, the truncation is not an approximation but an exact algebraic identity, because the first variable is nilpotent of order $m$. This is the algebraic reason why principal parts are well-defined and canonical.
\end{remark}

\subsection{Frame-Induced Identifications}
\label{subsec:frame-identifications}

Let $D \subset U_{\Cinf}$ be a finite effective divisor, and let $U \subset X_{\Cinf}$ be an open neighbourhood of $\supp{D}$. Let $e \in H^0(U, \LL)$ be a frame for $\LL$ on $U$, meaning $e(x) \neq 0$ for all $x \in U$, so that $e$ trivializes $\LL$ on $U$.

We introduce two closely related isomorphisms induced by the frame: one acting on sections over the product $U \times U$, and one acting on principal parts along $D$.

\subsubsection{The identification on $U \times U$}

On the open set $U \times U$, the nowhere-vanishing section $e(\one) \otimes e(\two)^{-1}$ trivializes the line bundle $\prI^* \LL \otimes \prII^* \LL^{-1}$. This gives a canonical isomorphism between the Szeg\H{o} bundle of $\LL$ and the Szeg\H{o} bundle of the trivial bundle $\OO$.

\begin{definition}
Define the \textbf{frame identification on $U \times U$}
\[
\Upsilon_e^U : \left. \BB_{\LL}(\DeltaDiag) \right|_{U \times U} \stackrel{\sim}{\longrightarrow} \left. \BB_{\OO_{X_{\Cinf}}}(\DeltaDiag) \right|_{U \times U}
\]
as the unique $\OO_{U \times U}$-module isomorphism such that for every local section $s$ of $\BB_{\LL}(\DeltaDiag)$,
\[
s = \Upsilon_e^U(s) \cdot \left( e(\one) \otimes e(\two)^{-1} \right).
\]
In words, $\Upsilon_e^U$ extracts the scalar coefficient of $s$ with respect to the frame tensor $e(\one) \otimes e(\two)^{-1}$.
\end{definition}

For the normalized Szeg\H{o} kernel $S_{\LL}$, its image under $\Upsilon_e^U$ is called its \textbf{$e$-coefficient}:
\[
S_{\LL}^e := \Upsilon_e^U\left( \left. S_{\LL} \right|_{U \times U} \right) \in H^0\left(U \times U, \BB_{\OO}(\DeltaDiag)\right).
\]
This is a scalar-valued kernel on $U \times U$ with a simple pole along the diagonal.

\subsubsection{The identification on principal parts}

The same frame also induces an isomorphism at the level of principal part spaces. Recall that
\[
W_D(\LL) = H^0(D, \LL|_D) \otimes_{\Cinf} H^0\left(D, \frac{(\LL^{-1} \otimes \OmegaOne)(D)}{\LL^{-1} \otimes \OmegaOne}\right).
\]
Using the frame $e$, every element of $W_D(\LL)$ can be written uniquely as a linear combination of pure tensors of the form
\[
\left( f \cdot e|_D \right) \otimes \left( g \, du \cdot e|_D^{-1} \right),
\]
where $f, g$ are functions on a neighbourhood of $D$, and $u$ is a local coordinate.

\begin{definition}
Define the \textbf{frame identification on principal parts}
\[
\Upsilon_e^D : W_D(\LL) \stackrel{\sim}{\longrightarrow} W_D(\OO)
\]
as the unique $\Cinf$-linear map determined on pure tensors by
\[
\Upsilon_e^D\left( \left(f \cdot e|_D\right) \otimes \left( g \, du \cdot e|_D^{-1} \right) \right) = \overline{f} \otimes \overline{g \, du},
\]
where $\overline{f} \in H^0(D, \OO)$ and $\overline{g \, du} \in H^0(D, \OmegaOne(D)/\OmegaOne)$ are the classes of $f$ and $g \, du$ modulo functions vanishing on $D$ and regular differentials, respectively.
\end{definition}

The map $\Upsilon_e^D$ is well-defined and bijective: its inverse is obtained by tensoring with $e$ on the first factor and $e^{-1}$ on the second factor. While it depends on the choice of frame, its composition with the principal part map is frame-independent, as we will see.

\subsubsection{Commutativity with the principal part map}

We now prove that the two frame identifications are compatible with the principal part map $\PP_D$, in the sense that the following diagram commutes:
\[
\begin{tikzcd}
H^0(U \times U, \BB_{\LL}(\DeltaDiag)|_{U \times U})
  \arrow[r, "\Upsilon_e^U"]
  \arrow[d, "\PP_D"']
&
H^0(U \times U, \BB_{\OO}(\DeltaDiag)|_{U \times U})
  \arrow[d, "\PP_D"']
\\
W_D(\LL)
  \arrow[r, "\Upsilon_e^D"]
&
W_D(\OO).
\end{tikzcd}
\]

\begin{proposition}[Principal parts and frame identifications commute]
\label{prop:frame-pp-commute}
For any section $s \in H^0(U \times U, \BB_{\LL}(\DeltaDiag)|_{U \times U})$, we have
\[
\Upsilon_e^D\left( \PP_D(s) \right) = \PP_D\left( \Upsilon_e^U(s) \right).
\]
\end{proposition}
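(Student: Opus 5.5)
The identity is essentially formal. Both sides are $\Cinf$-linear in $s$ and are built from restriction to $D\times U$ followed by passage to a quotient. So the plan is to show that multiplication by the frame tensor commutes with each of these operations. I will work throughout in the intrinsic model $\overline{W}_D(\LL)$ of Definition~\ref{def:intrinsic-pp} and use the canonical identification $\overline{W}_D(\LL)\cong W_D(\LL)$ of Lemma~\ref{lemma:pp-basic-properties}(ii).

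First, I will write any $s\in H^0(U\times U,\BB_{\LL}(\DeltaDiag))$ as
\[
s=\Upsilon_e^U(s)\cdot\bigl(e(\one)\otimes e(\two)^{-1}\bigr).
\]
This is possible because $e(\one)\otimes e(\two)^{-1}$ is a nowhere-vanishing section of $\prI^*\LL\otimes\prII^*\LL^{-1}$ on $U\times U$. Restriction to the closed subscheme $D\times U$ is a ring homomorphism on functions and is compatible with module structures. Hence
\[
s|_{D\times U}=\Upsilon_e^U(s)|_{D\times U}\cdot\bigl(e|_D(\one)\otimes e(\two)^{-1}\bigr).
\]
Multiplication by $e|_D(\one)\otimes e(\two)^{-1}$ sends the numerator sheaf $\prI^*(\OO|_D)\otimes\prII^*(\OmegaOne(D))$ isomorphically onto $\prI^*(\LL|_D)\otimes\prII^*((\LL^{-1}\otimes\OmegaOne)(D))$. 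It sends the denominator (no poles) onto the denominator, because $e(\two)^{-1}$ is a frame of $\LL^{-1}$ on $U$ and therefore does not change pole orders along $D$. It therefore descends to an isomorphism $\overline{W}_D(\OO)\to\overline{W}_D(\LL)$, which sends $\PP_D(\Upsilon_e^U(s))$ to $\PP_D(s)$. This is exactly the functoriality statement of Lemma~\ref{lemma:pp-basic-properties}(iv), applied to $\varphi=e:\OO_U\xrightarrow{\sim}\LL|_U$.

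The remaining point, which I expect to be the only real obstacle, is to check that this descended isomorphism is the inverse of $\Upsilon_e^D$ once transported to the tensor model by the Künneth isomorphism. For this I will pick pure tensors. By Lemma~\ref{lemma:pp-basic-properties}(ii), $H^0(D\times U,\prI^*\FF\otimes\prII^*\GG)\cong H^0(D,\FF)\otimes H^0(U,\GG)$. Every class in $\overline{W}_D(\OO)$ is then represented by a sum of terms $\overline f(\one)\otimes g\,du(\two)$ with $f$ a function near $D$ and $g\,du\in H^0(U,\OmegaOne(D))$. Multiplying such a term by the frame tensor gives $(f\,e|_D)(\one)\otimes(g\,du\,e^{-1})(\two)$. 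Under the Künneth identification, and after reducing the second factor modulo $\LL^{-1}\otimes\OmegaOne$, this is the pure tensor $(f\cdot e|_D)\otimes(g\,du\cdot e|_D^{-1})\in W_D(\LL)$. By the definition of $\Upsilon_e^D$, this element maps to $\overline f\otimes\overline{g\,du}$.

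Because the Künneth maps are natural in the sheaves, the two routes agree on a spanning set. Both routes are linear, so they agree everywhere. Two subsidiary points need care. The first is that the definition of $\Upsilon_e^D$ via pure tensors is independent of representatives. This follows because multiplication by the frame tensor is an isomorphism of $\OO$-modules preserving the relevant subsheaves. The second is that the Künneth isomorphism commutes with passing to the quotient. This uses exactness of $-\otimes_{\Cinf}H^0(D,\FF)$ over a field, together with surjectivity of $H^0(U,\GG(D))\to H^0(D,\GG(D)/\GG)$ on the affine $U$. Combining these gives $\Upsilon_e^D(\PP_D(s))=\PP_D(\Upsilon_e^U(s))$.
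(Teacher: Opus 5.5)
Your proof is correct and is essentially the paper's argument. In both, one writes $s|_{D\times U}$ as a sum of pure tensors (a section on $D$ in the first variable times a differential with poles along $D$ in the second) and observes that stripping the split frame tensor $e(\one)\otimes e(\two)^{-1}$ term by term commutes with passing to classes. The only difference is packaging: the paper checks this locally at each $y\in\supp{D}$ via the expansion $\sum_{k<m}u_\one^k e(\one)\otimes\alpha_k(\two)$ modulo $u_\one^m$, whereas you argue globally through functoriality (Lemma~\ref{lemma:pp-basic-properties}(iv)) and the K\"unneth isomorphism. Your version also makes explicit the compatibility of K\"unneth with the quotient, and correctly places the second factors in $(\LL^{-1}\otimes\OmegaOne)(D)$ rather than in $\LL^{-1}\otimes\OmegaOne$.
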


\begin{proof}
It suffices to verify the equality locally at each point $y \in \supp{D}$. Fix $y \in \supp{D}$, let $m = \operatorname{ord}_y(D)$, and choose a local coordinate $u$ vanishing at $y$.

Near $y$, we may write the restriction of $s$ to $D \times U$ in the form
\[
s \equiv \sum_{k=0}^{m-1} u_\one^k \, e(\one) \otimes \alpha_k(\two) \pmod{u_\one^m},
\]
where each $\alpha_k$ is a local section of $\LL^{-1} \otimes \OmegaOne$ in the second variable. By definition of the principal part,
\[
\PP_D(s) = \sum_{k=0}^{m-1} u_\one^k \, e(\one) \otimes [\alpha_k] \in W_D(\LL),
\]
where $[\alpha_k]$ denotes the class of $\alpha_k$ in the differential quotient.

Applying $\Upsilon_e^D$ to this expression gives
\[
\Upsilon_e^D\left( \PP_D(s) \right) = \sum_{k=0}^{m-1} u_\one^k \otimes [\alpha_k \cdot e(\two)] \in W_D(\OO).
\]

On the other hand, by definition of $\Upsilon_e^U$, the scalar coefficient $\Upsilon_e^U(s)$ has the same local expansion after stripping off the frame tensor:
\[
\Upsilon_e^U(s) \equiv \sum_{k=0}^{m-1} u_\one^k \otimes \alpha_k^0(\two) \pmod{u_\one^m},
\]
where $\alpha_k^0 = \alpha_k \cdot e(\two)$ is a local section of $\OmegaOne$. Taking its principal part,
\[
\PP_D\left( \Upsilon_e^U(s) \right) = \sum_{k=0}^{m-1} u_\one^k \otimes [\alpha_k^0] = \sum_{k=0}^{m-1} u_\one^k \otimes [\alpha_k \cdot e(\two)].
\]

The two expressions are identical. Summing over all points in $\supp{D}$ yields the global equality.
\end{proof}

An immediate corollary is the identity we have already used informally: the principal part of the $e$-coefficient of the Szeg\H{o} kernel equals the frame image of the geometric principal part.

\begin{corollary}
\label{cor:frame-pp-szego}
Let $e$ be a frame for $\LL$ on a neighbourhood of $D$. Then
\[
\Upsilon_e^D\left( \PP_D(S_{\LL}) \right) = \PP_D\left( S_{\LL}^e \right).
\]
\end{corollary}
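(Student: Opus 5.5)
This corollary is the special case $s = S_{\LL}|_{U\times U}$ of Proposition~\ref{prop:frame-pp-commute}. The only work is to confirm that $S_{\LL}$, restricted to $U\times U$, lies in the domain of that proposition, and that the right-hand side matches the definition of $S_{\LL}^e$.

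First I would take the frame $e$ on an open neighbourhood $U$ of $\supp{D}$. If necessary I would shrink $U$ to an affine neighbourhood, which Lemma~\ref{lemma:pp-basic-properties}(ii) says is harmless because the identification $\overline{W}_D(\LL)\cong W_D(\LL)$ is compatible with shrinking. By Proposition~\ref{prop:szego-existence}, $S_{\LL}$ is a global section of $\BB_{\LL}(\DeltaDiag)$ on $X_{\Cinf}\times X_{\Cinf}$. Its restriction to $U\times U$ is therefore an element of $H^0(U\times U,\BB_{\LL}(\DeltaDiag)|_{U\times U})$, and both $\PP_D(S_{\LL})$ and $\Upsilon^U_e(S_{\LL}|_{U\times U})$ make sense.

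Next I would apply Proposition~\ref{prop:frame-pp-commute} to $s=S_{\LL}|_{U\times U}$. This gives
\[
\Upsilon_e^D(\PP_D(S_{\LL}))=\PP_D(\Upsilon_e^U(S_{\LL}|_{U\times U})).
\]
By definition, $\Upsilon_e^U(S_{\LL}|_{U\times U})=S_{\LL}^e$, so the right-hand side is $\PP_D(S_{\LL}^e)$, which is the claim. Here $\PP_D(S_{\LL})$ means the principal part of the restriction, since $\PP_D$ depends only on $s|_{D\times U}$.

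The one point that needs care is independence of the neighbourhood: the $U$ used to define $\PP_D(S_{\LL})$ may differ from the domain of the frame. I would handle this by passing to a common affine neighbourhood and using compatibility with shrinking from Lemma~\ref{lemma:pp-basic-properties}(ii). Beyond that the corollary is formal.
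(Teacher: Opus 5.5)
Your proposal is correct and matches the paper's proof: it applies Proposition~\ref{prop:frame-pp-commute} to $s=S_{\LL}|_{U\times U}$ and then uses the definition $S_{\LL}^e=\Upsilon_e^U(S_{\LL}|_{U\times U})$. Your extra step of passing to a common affine neighbourhood via the shrinking compatibility in Lemma~\ref{lemma:pp-basic-properties}(ii) is harmless bookkeeping that the paper leaves implicit.
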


\begin{proof}
Apply Proposition \ref{prop:frame-pp-commute} to $s = S_{\LL}|_{U \times U}$, and use the definition $S_{\LL}^e = \Upsilon_e^U(S_{\LL}|_{U \times U})$.
\end{proof}

\subsection{Change of Frame: Divided Differences Are Regular}
\label{subsec:frame-change-divided-diff}

We now verify explicitly that the principal part of the Szeg\H{o} kernel is invariant under change of frame. This is already implied by the functoriality property, but it is instructive to see it directly in terms of divided differences.

\begin{lemma}[Divided differences are regular]
\label{lemma:divided-diff-regular}
Let $e' = h e$ be another frame near $\supp{D}$, where $h$ is nowhere vanishing. Then
\[
S_{\LL}^{e'} - S_{\LL}^e = \frac{h(u_{\two}) - h(u_\one)}{h(u_\one)} S_{\LL}^e
\]
is holomorphic near $\supp{D} \times \supp{D}$. Consequently,
\[
\PP_D(S_{\LL}^{e'}) = \PP_D(S_{\LL}^e).
\]
\end{lemma}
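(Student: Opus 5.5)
The plan is a direct computation in the frames, followed by an application of the regularity property, Lemma~\ref{lemma:pp-basic-properties}(iii). First I will derive the transformation formula for the coefficients. By definition of $\Upsilon_e^U$ and $\Upsilon_{e'}^U$, on $U \times U$ we have
\[
S_{\LL} = S_{\LL}^e \cdot e(\one)\otimes e(\two)^{-1} = S_{\LL}^{e'} \cdot e'(\one)\otimes e'(\two)^{-1}.
\]
Substituting $e' = h e$ gives $e'(\one)\otimes e'(\two)^{-1} = \frac{h(\one)}{h(\two)}\, e(\one)\otimes e(\two)^{-1}$. Since the frame tensor is nowhere vanishing, comparing coefficients yields $S_{\LL}^{e'} = \frac{h(\two)}{h(\one)} S_{\LL}^e$. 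Subtracting $S_{\LL}^e$ then gives exactly the displayed formula $\frac{h(u_\two)-h(u_\one)}{h(u_\one)} S_{\LL}^e$. This step is formal, because $h$ is a unit near $\supp D$.

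Next I will show that the difference is holomorphic near $\supp D\times\supp D$. Away from the diagonal, $S_{\LL}^e$ is already regular and $1/h(\one)$ is regular, so only the diagonal points $(y,y)$ need attention. Near $(y,y)$, Proposition~\ref{prop:szego-existence} together with Lemma~\ref{lemma:pp-basic-properties}(v) gives
\[
S_{\LL}^e = \frac{du_\two}{u_\two - u_\one} + \epsilon, \qquad \epsilon \text{ regular}.
\]
The function $h(\two) - h(\one)$ is regular on a neighbourhood of $(y,y)$ and vanishes on $\DeltaDiag$. Locally the ideal of the diagonal in $U\times U$ is generated by $u_\two - u_\one$, since $u$ is a local coordinate and $\DeltaDiag$ is a smooth divisor cut out by this equation. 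Hence $h(\two)-h(\one) = (u_\two-u_\one)\, \delta$ with $\delta$ regular; this $\delta$ is the divided difference of $h$. Therefore
\[
\frac{h(\two)-h(\one)}{h(\one)}\, S_{\LL}^e = \frac{\delta\, du_\two}{h(\one)} + \frac{h(\two)-h(\one)}{h(\one)}\,\epsilon,
\]
and both terms on the right are regular. I expect the only point needing care to be the claim that $u_\two-u_\one$ generates the ideal of $\DeltaDiag$ locally. This is standard: $\DeltaDiag$ is a Cartier divisor, and the local equation $u_\two - u_\one$ has nonzero differential along it.

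Finally, I will conclude that the principal parts agree. The difference $S_{\LL}^{e'} - S_{\LL}^e$ is a section of $\BB_{\OO}$, that is, holomorphic in the second variable along $D$, on a neighbourhood of $\supp D \times \supp D$. After shrinking $U$, which is allowed by the compatibility with shrinking in Lemma~\ref{lemma:pp-basic-properties}(ii), this section has zero principal part by Lemma~\ref{lemma:pp-basic-properties}(iii). By linearity of $\PP_D$ it follows that $\PP_D(S_{\LL}^{e'}) = \PP_D(S_{\LL}^e)$.
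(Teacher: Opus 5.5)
Your proof is correct and follows the paper's route: the local form $\frac{du_\two}{u_\two-u_\one}+\epsilon$, regularity of the divided difference of $h$, and then the regularity property of Lemma~\ref{lemma:pp-basic-properties}(iii). Your derivation of $S_{\LL}^{e'}=\frac{h(\two)}{h(\one)}S_{\LL}^{e}$ directly from the definition of $\Upsilon^U_e$ is actually cleaner than the paper's. The paper instead cites the twisting formula and writes the inverse factor $h(\one)/h(\two)$, which contradicts the displayed statement, although this does not affect the conclusion.
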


\begin{proof}
Near a diagonal point, we have
\[
S_{\LL}^e = \frac{du_{\two}}{u_{\two} - u_\one} + \epsilon
\]
with $\epsilon$ regular. Changing frame by $h$ multiplies the kernel by $h(u_\one) / h(u_{\two})$ (see Proposition \ref{prop:twisting-szego}). So
\[
S_{\LL}^{e'} = \frac{h(u_\one)}{h(u_{\two})} S_{\LL}^e.
\]
The difference is
\[
S_{\LL}^{e'} - S_{\LL}^e = \left( \frac{h(u_\one)}{h(u_{\two})} - 1 \right) S_{\LL}^e = \frac{h(u_\one) - h(u_{\two})}{h(u_{\two})} S_{\LL}^e.
\]
Substituting the local form of $S_{\LL}^e$, the singular part becomes
\[
\frac{h(u_\one) - h(u_{\two})}{h(u_{\two})} \cdot \frac{du_{\two}}{u_{\two} - u_\one} = -\frac{h(u_{\two}) - h(u_\one)}{u_{\two} - u_\one} \cdot \frac{du_{\two}}{h(u_{\two})}.
\]
The divided difference $(h(u_{\two}) - h(u_\one)) / (u_{\two} - u_\one)$ is a holomorphic function of $u_\one$ and $u_{\two}$, because the numerator is divisible by the denominator (since $h$ is a polynomial in the local coordinate, or more generally a holomorphic function). Therefore the entire expression is holomorphic near the diagonal.

Since the difference is holomorphic near $\supp{D} \times \supp{D}$, its principal part along $D$ is zero by the regularity property (Lemma \ref{lemma:pp-basic-properties}(iii)). Hence $\PP_D(S_{\LL}^{e'}) = \PP_D(S_{\LL}^e)$.
\end{proof}

\subsection{The Transposed Principal Part}
\label{subsec:transposed-pp}

Symmetrically to the ordinary principal part, which restricts the first variable to $D$ and extracts polar parts in the second variable, we can define a \textbf{transposed principal part} that restricts the second variable to $D$ and extracts polar parts in the first variable.

\begin{definition}[Transposed principal part]
Define the transposed principal part space
\[
\widetilde{W}_D(\LL) = \frac{H^0\left(U \times D, \, \prI^* \LL(D) \otimes \prII^* \left( (\LL^{-1} \otimes \OmegaOne)|_D \right)\right)}{H^0\left(U \times D, \, \prI^* \LL \otimes \prII^* \left( (\LL^{-1} \otimes \OmegaOne)|_D \right)\right)}.
\]
For $s \in H^0(U \times U, \BB_{\LL}(\DeltaDiag))$, define the \textbf{transposed principal part}
\begin{equation}
\label{eq:transposed-pp-def}
\tPP_D(s) = \left[ s|_{U \times D} \right] \in \widetilde{W}_D(\LL).
\end{equation}
\end{definition}

The exact truncation formula carries a sign in the transposed case:
\[
\left. \frac{1}{u_{\two} - u_\one} \right|_{U \times D} = - \sum_{j < m} \frac{u_{\two}^j}{u_\one^{j+1}}.
\]
This follows from the same algebraic identity, but now $u_{\two}^m = 0$ and we rearrange differently.

The ordinary and transposed principal parts are related by a flip operation, which exchanges the two factors and converts functions to differentials and vice versa.

\begin{notation}
    Define the \textbf{$\omega$-flip}
\[
\mathcal{T}_{\omega} : W_D(\OO) \to \widetilde{W}_D(\OO), \qquad a(\one) \otimes [\gamma](\two) \mapsto [\gamma / \omega](\one) \otimes a \omega(\two).
\]
\end{notation}

\begin{lemma}[Flip relation]
\label{lemma:flip-pp}
Let $\omega$ generate $\OmegaOne$ near $\supp{D}$. 
Then for every $s \in H^0(U \times U, \BB_{\LL}(\DeltaDiag))$,
\[
\mathcal{T}_{\omega} \left( \PP_D \left( \Upsilon_e^U s \right) \right) = - \tPP_D \left( \Upsilon_e^U s \right).
\]
\end{lemma}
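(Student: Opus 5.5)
The strategy is to reduce both sides to one explicit model kernel near each point of $\supp{D}$, and then compare the two sides by the exact truncation identities.

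\textbf{Reduction to a model kernel.} Put $t=\Upsilon_e^U s$. This is a section of $\BB_{\OO}(\DeltaDiag)$ on $U\times U$, that is, a scalar kernel with at most a simple pole along the diagonal. Both $\PP_D$ and $\tPP_D$ are $\Cinf$-linear, and so is $\mathcal{T}_\omega$. Moreover, both $\PP_D$ and $\tPP_D$ kill any section that is regular near $\supp{D}\times\supp{D}$: for $\PP_D$ this is Lemma~\ref{lemma:pp-basic-properties}(iii), and the same argument applied to $U\times D$ gives it for $\tPP_D$. Both maps also split as direct sums over the points $y\in\supp{D}$. So the claim is local at one point $y$, and there we may replace $t$ by any kernel that differs from it by a regular section.

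Fix a uniformizer $u$ at $y$ and write $\omega=w\,du$ with $w$ a unit. Near $(y,y)$ we write
\[
t=\frac{\sigma(u_\one,u_\two)\,du_\two}{u_\two-u_\one}
\]
with $\sigma$ regular. I will strip $\sigma$ down in two steps, each time discarding a divided difference, which is regular exactly as in Lemma~\ref{lemma:divided-diff-regular}.
\begin{itemize}
\item First, $\sigma(u_\one,u_\two)-\sigma(u_\two,u_\two)$ is divisible by $u_\two-u_\one$. So modulo regular sections we may take $\sigma$ to depend on $u_\two$ only.
\item Setting $a=\sigma(u,u)/w$, this gives $t\equiv a(\two)\,\omega(\two)/(u_\two-u_\one)$.
\item Second, $a(\two)-a(\one)$ is also divisible by $u_\two-u_\one$. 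Hence
\[
t\;\equiv\;\frac{a(\one)\,\omega(\two)}{u_\two-u_\one}\;\equiv\;\frac{a(\two)\,\omega(\two)}{u_\two-u_\one}
\]
modulo regular sections.
\end{itemize}
I am free to use whichever of the two representatives is convenient on each side.

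\textbf{Computing the two sides.} For $\PP_D$ I use the first representative and the exact truncation identity \eqref{eq:exact-truncation}:
\[
\PP_D(t)=\sum_{j<m}a\,u^j(\one)\otimes\Bigl[\frac{\omega}{u^{j+1}}\Bigr](\two).
\]
Applying $\mathcal{T}_\omega$ to each pure tensor divides the differential class by $\omega$, moves it to the first slot, and moves $a u^j\omega$ to the second slot:
\[
\mathcal{T}_\omega\bigl(\PP_D(t)\bigr)=\sum_{j<m}\bigl[u^{-j-1}\bigr](\one)\otimes a\,u^j\omega(\two).
\]
For $\tPP_D$ I use the second representative and the transposed truncation identity:
\[
\tPP_D(t)=-\sum_{j<m}\bigl[u^{-j-1}\bigr](\one)\otimes a\,u^j\omega(\two).
\]
Comparing the two displays gives exactly the asserted sign, and summing over $y\in\supp{D}$ gives the global identity.

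\textbf{Expected obstacle.} The delicate point is not the computation but its legitimacy. I must check that $\mathcal{T}_\omega$ is well defined on $W_D(\OO)$, so that applying it termwise to the truncated expansion is allowed. This means that changing $a$ by an element of $(u^m)$, or $\gamma$ by a regular differential, must not change $[\gamma/\omega]\otimes a\omega$ in $\widetilde W_D(\OO)$.

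I would verify this by a pole-order count. In the first slot, a regular $\gamma$ gives $\gamma/\omega$ regular, since $\omega$ is a unit near $\supp{D}$. In the second slot, $a\omega$ is only taken modulo $u^m$, and so is $a$; both statements use that $\omega$ generates $\OmegaOne$ near $\supp{D}$. Once this well-definedness is in hand, the two choices of representative above are justified, because each side depends only on $t$ modulo regular sections.
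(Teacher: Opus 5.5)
Your argument is correct, and it takes a different route from the paper's.

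\textbf{The paper's route.} It keeps the full double expansion $\sigma=\sum_{i,l}\sigma_{il}u_\one^iu_\two^l$. It writes $\PP_D$ as a triple sum over $(i,l,k)$ with $l\le k$, and $\tPP_D$ as a triple sum over $(i,l,j)$ with $i\le j$. It then matches the two by the reindexing $j=i+k-l$, using nilpotency to discard the terms with $i+k\ge m$ where the index ranges differ.

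\textbf{Your route.} You first reduce the kernel modulo regular sections to the normal form $a(\one)\,\omega(\two)/(u_\two-u_\one)$. This uses twice the fact that a function vanishing on the diagonal is divisible by $u_\two-u_\one$. After that, both sides are single sums over $j<m$ and agree term by term, with no combinatorics.

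\textbf{What your route buys.} First, it shows transparently that both principal parts depend only on $\sigma|_{\DeltaDiag}$, i.e.\ on the diagonal residue $\res_{\DeltaDiag}(t)$. Second, it handles an arbitrary local generator $\omega=w\,du$ directly. The paper's termwise identification of $\mathcal{T}_\omega\bigl(u^{i+k}\otimes[u^{l-k-1}du]\bigr)=[u^{l-k-1}du/\omega]\otimes u^{i+k}\omega$ with $[u^{i-j-1}]\otimes u^{l+j}du$ is literally valid only when $\omega$ is a constant multiple of $du$. For general $\omega$ it needs an extra observation: the image of $\PP_D$ is annihilated by $g(\one)-g(\two)$ for every regular $g$, and on such tensors $\mathcal{T}_{h\omega}$ and $\mathcal{T}_\omega$ coincide.

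\textbf{What the paper's route buys.} It gives explicit coefficient formulas for both principal parts of an arbitrary kernel.

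\textbf{Minor points in your write-up.} Replacing $t$ by germs near $(y,y)$ tacitly uses that the $y$-components of $\PP_D$ and $\tPP_D$ are computed from the germ of $t$ there. This is the compatibility with shrinking $U$ in Lemma~\ref{lemma:pp-basic-properties}(ii), and the paper's power-series expansion relies on it in the same way. Also, in your well-definedness check of $\mathcal{T}_\omega$, only the first slot uses that $\omega$ is nowhere vanishing; the second slot only needs $\omega$ to be regular.
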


\begin{proof}
Both sides are local, so we may check at a single point $y \in \supp{D}$. Write $s = \sigma \, du_{\two} / (u_{\two} - u_\one)$ near $(y,y)$, with $\sigma = \sum_{i,l} \sigma_{il} u_\one^i u_{\two}^l$.

The ordinary principal part expands as
\[
\PP_D = \sum_{\substack{i,l,k < m \\ l \leq k}} \sigma_{il} \, u^{i+k}(\one) \otimes [u^{l-k-1} du](\two).
\]
This follows from substituting the truncation formula and collecting terms.

The transposed principal part, using the signed truncation, expands as
\[
\tPP_D = - \sum_{\substack{i,l,j < m \\ i \leq j}} \sigma_{il} \, [u^{i-j-1}](\one) \otimes u^{l+j} du(\two).
\]

Now make the index change $j = i + k - l$ in the transposed sum. The condition $i \leq j$ is equivalent to $l \leq k$. Terms with $i + k \geq m$ vanish on both sides because of the nilpotency. One can verify that this index change is a bijection between the index sets, and that the coefficients match up to the overall sign. This establishes the identity.
\end{proof}

When the divisor $D$ is principal, say $D = \operatorname{div}(f)$, there is an even more direct relation between the two principal parts, via multiplication by $f$.

\begin{proposition}[Transposition formula]
\label{prop:transposition-formula}
If $D = \operatorname{div}(f)$ is principal on $U$, define
\[
\mathcal{T}_f([a](\one) \otimes b(\two)) = [a f](\one) \otimes [b / f](\two).
\]
Then
\begin{equation}
\label{eq:transposition-formula}
\tPP_D(s) = - \mathcal{T}_f^{-1} \left( \PP_D(s) \right)
\end{equation}
for every section $s$. 
\end{proposition}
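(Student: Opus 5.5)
The idea is to express both $\PP_D(s)$ and $\tPP_D(s)$ through a single section that is \emph{regular} on $U \times U$, and then observe that $\mathcal{T}_f$ carries one expression to the other on pure tensors.

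\textbf{Reductions.} Both sides of \eqref{eq:transposition-formula} are $\Cinf$-linear in $s$ and compatible with the frame identifications. Proposition \ref{prop:frame-pp-commute} gives this for $\PP_D$, and the same argument applies verbatim to $\tPP_D$. So after applying $\Upsilon_e^U$ we may assume $\LL = \OO$. Then $s \in H^0(U \times U, \BB_{\OO}(\DeltaDiag))$ is a differential in the second variable with at most a simple pole along $\DeltaDiag$.

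We also read $\mathcal{T}_f$ as a map $\widetilde{W}_D(\OO) \to W_D(\OO)$: multiplying a polar class in the first variable by $f$ gives a function modulo $f$, and dividing a differential restricted to $D$ by $f$ gives a polar class in the second variable. In these terms the claim is $\PP_D(s) = -\mathcal{T}_f(\tPP_D(s))$.

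\textbf{Step 1: a regular companion of $s$.} Set
\[
\sigma = \bigl(f(\two) - f(\one)\bigr)\, s .
\]
Since $f(\two) - f(\one)$ vanishes on $\DeltaDiag$, it cancels the simple diagonal pole. Hence $\sigma$ is a regular section of $\prI^*\OO \otimes \prII^*\OmegaOne$ on $U \times U$; locally this is the same divided-difference regularity used in Lemma \ref{lemma:divided-diff-regular}.

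Because $U$ is affine, Künneth lets us write $\sigma = \sum_i a_i(\one) \otimes b_i(\two)$ with $a_i \in \OO(U)$ and $b_i \in \OmegaOne(U)$.

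\textbf{Step 2: both principal parts come from $\sigma$.} On $D \times U$ we have $f(\one) = 0$, since $f$ generates the ideal of $D$ and so $f(\one)$ restricts to zero there. Therefore
\[
s|_{D\times U} = \bigl(\sigma / f(\two)\bigr)\big|_{D \times U},
\qquad\text{so}\qquad
\PP_D(s) = \sum_i \overline{a_i} \otimes [b_i/f].
\]
Symmetrically, on $U \times D$ we have $f(\two) = 0$, so $s|_{U\times D} = -\sigma/f(\one)$, and
\[
\tPP_D(s) = -\sum_i [a_i/f] \otimes b_i|_D .
\]

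\textbf{Step 3: apply $\mathcal{T}_f$.} On a pure tensor, $\mathcal{T}_f$ sends $[a_i/f] \otimes b_i|_D$ to $\overline{a_i} \otimes [b_i/f]$. Applying this termwise to the formula for $\tPP_D(s)$ yields $-\mathcal{T}_f(\tPP_D(s)) = \PP_D(s)$, which is the claim.

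\textbf{Main obstacle.} The substantive point is that $\mathcal{T}_f$ is well defined, and an isomorphism, on the quotient spaces rather than merely on representatives. This means checking two things.
\begin{itemize}
\item Changing $a$ by a regular function changes $af$ by an element of $(f)$, which is zero in $H^0(D,\OO)$.
\item Changing $b$ by an element of $f\cdot\OmegaOne$ changes $b/f$ by a regular differential, which is zero in the polar quotient.
\end{itemize}
Together with the Künneth identification of Lemma \ref{lemma:pp-basic-properties}(ii), this justifies passing from representatives to classes. It also shows that $\mathcal{T}_f$ is the tensor product of the isomorphisms $\OO(D)/\OO \xrightarrow{\cdot f} \OO/\OO(-D)$ and $\OmegaOne|_D \xrightarrow{\cdot 1/f} \OmegaOne(D)/\OmegaOne$, so it is bijective and $\mathcal{T}_f^{-1}$ makes sense.
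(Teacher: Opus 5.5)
Your argument is correct, and it takes a genuinely different route from the paper's.

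\textbf{The paper's route.} The paper only sketches a local proof. Near each $y\in\supp{D}$ it writes $s=\sigma\,du_{\two}/(u_{\two}-u_{\one})$ and expands $\PP_D(s)$ and $\tPP_D(s)$ with the two truncation identities, as in the flip relation (Lemma~\ref{lemma:flip-pp}). It then matches coefficients by the index change $j=m-1-k$ and omits the verification. That index change is exactly the effect of $\mathcal{T}_f$ only when $f$ is locally a pure power $u^m$, as for $f=t^N$. For $f=u^m w$ with $w$ a local unit, the factor $w(\one)/w(\two)$ has to be absorbed separately, for instance by the divided-difference argument of Lemma~\ref{lemma:divided-diff-regular}. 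The local pieces at the different points of $\supp{D}$ also have to be reassembled.

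\textbf{What your route buys.} Your regular companion $\sigma=(f(\two)-f(\one))s$, together with K\"unneth on the affine $U\times U$, avoids all of this. No coordinates are needed, and all points of $\supp{D}$ and the unit are handled at once. You get the closed formulas $\PP_D(s)=[\sigma/f(\two)]$ and $\tPP_D(s)=-[\sigma/f(\one)]$. From these the transposition formula follows immediately, once $\mathcal{T}_f$ is shown to be a well-defined isomorphism on the quotients, which you do.

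There is also a side benefit. Combined with Proposition~\ref{prop:frame-coeff-pp}, your formula shows that $\WeilOp\,\omega(\two)$, after scalar extension, is the restriction of $(f(\two)-f(\one))S_{\LL}^e$ to $D\times D$. For $s=dt_{\two}/(t_{\two}-t_{\one})$ this is visibly the divided difference.

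\textbf{Two small points to tighten.}
\begin{enumerate}
\item The equality $s|_{D\times U}=\sigma|_{D\times U}/f(\two)$ should be checked against the paper's truncation definition of restriction. It holds because restriction is $\OO$-linear and multiplication by $f(\two)$ is injective on $H^0(D,\OO_D)\otimes_{\Cinf}H^0(U,\OmegaOne(D))$. So $s|_{D\times U}$ is determined by $f(\two)\,s|_{D\times U}=\sigma|_{D\times U}$, and symmetrically with $f(\one)$ on $U\times D$.
\item The reduction to $\LL=\OO$ is unnecessary, since K\"unneth applies directly to $\BB_{\LL}$ on $U\times U$. Carrying out the reduction would require shrinking $U$ so that $\LL$ has a frame on it.
\end{enumerate}
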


\begin{proof}
This follows from the same local expansion, with the index change $j = m - 1 - k$. We omit the routine verification, and refer to Section \ref{sec:ex-projective-line} for the resulting closed formula in the polynomial case.
\end{proof}

\section{The Finite Residue--Cauchy Theorem}
\label{sec:finite-residue-cauchy}

We are now ready to state and prove our main theorem: the principal part of the normalized Szeg\H{o} kernel along any finite divisor $D$ realizes the scalar extension of the finite residue Casimir. We first prove the frame-coefficient version, then deduce the intrinsic version via functoriality.

\subsection{The Main Theorem}
\label{subsec:main-theorem}

\begin{proposition}[Frame-coefficient principal part equals Casimir]
\label{prop:frame-coeff-pp}
For every acyclic line bundle $\LL$ on $X_{\Cinf}$ and every frame $e$ of $\LL$ on a neighbourhood of $\supp{D}$,
\begin{equation}
\label{eq:frame-coeff-pp}
\PP_D(S_{\LL}^e) = \inc(\CasFin) \quad \in W_D(\OO).
\end{equation}
\end{proposition}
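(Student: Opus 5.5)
The plan is to compute both sides of \eqref{eq:frame-coeff-pp} in the same local model and compare them. Since $W_D(\OO)=H^0(D,\OO)\otimes_{\Cinf} H^0(D,\OmegaOne(D)/\OmegaOne)$ decomposes as a direct sum over the points $y\in\supp{D}$, it suffices to compare the two sides one point at a time. At each $y$ we fix a single uniformizer $u_y$ and use it for both sides.

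First, the left-hand side. By Proposition \ref{prop:szego-existence}, near every diagonal point $(y,y)$ the $e$-coefficient has the form
\[
S_{\LL}^e=\frac{du_{\two}}{u_{\two}-u_\one}+\epsilon,
\]
with $\epsilon$ regular along the diagonal. Off the diagonal, that is on $D\times U$ near points $(y,y')$ with $y\neq y'$, the kernel $S_{\LL}^e$ is regular in the second variable at the points of $D$. Hence by Lemma \ref{lemma:pp-basic-properties}(iii) those regions contribute nothing. The only contributions come from pairs of the form $(y,y)$.

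Applying Lemma \ref{lemma:pp-basic-properties}(v), equivalently the exact truncation identity \eqref{eq:exact-truncation}, to the singular term at each $y$ gives
\[
\PP_D(S_{\LL}^e)=\sum_{y\in\supp{D}}\sum_{k=0}^{m_y-1}u_\one^k\otimes\frac{du_{\two}}{u_{\two}^{k+1}}.
\]
Some care is needed because $\epsilon$ is only known to be regular near the diagonal. We must check that it is regular on a neighbourhood of $\{y\}\times\{y\}$ in $D\times U$, and that restricting it to $D\times U$ lands in the denominator sheaf of \eqref{eq:intrinsic-pp-space}. This holds because $\epsilon$ is a section of $\BB_{\OO}$ there, which has no pole along $D$ in the second variable.

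Second, the right-hand side. Lemma \ref{lemma:split-casimir} computes $\inc(\CasFin)$ with the same uniformizers $u_y$ and yields exactly the same double sum. That lemma also shows the expression is independent of the uniformizers, so using one $u_y$ for both sides is legitimate. Comparing the two formulas proves \eqref{eq:frame-coeff-pp}. Independence of the frame $e$ is then automatic, and is also confirmed by Lemma \ref{lemma:divided-diff-regular}.

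The main obstacle is the localization step: the quotient $\overline{W}_D(\OO)$ is defined globally on $D\times U$, but the comparison uses only the local expansions at the diagonal points. To justify this, I would use the canonical isomorphism of Lemma \ref{lemma:pp-basic-properties}(ii), $\overline{W}_D\cong W_D$. I would then observe that the second-factor quotient $H^0(D,\OmegaOne(D)/\OmegaOne)$ is a skyscraper sheaf supported on $\supp{D}$. Consequently the class of a section is determined by its germs at the points $(y,y')$ with $y,y'\in\supp{D}$. The germs with $y\neq y'$ vanish because the diagonal pole does not meet those points. The germs with $y=y'$ are exactly the ones computed in the first step.
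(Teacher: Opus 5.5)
Your proof is correct. Its core is exactly Step~3 of the paper's proof: Lemma~\ref{lemma:pp-basic-properties}(v) gives the local double sum, and Lemma~\ref{lemma:split-casimir}, with the same uniformizers, identifies it with $\inc(\CasFin)$.

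The difference is in what comes before that computation. The paper first runs two reduction steps to show that $\PP_D(S_{\LL}^e)$ does not depend on $(\LL,e)$:
\begin{itemize}
\item matched twisting between two acyclic bundles, via Proposition~\ref{prop:twisting-szego} with the frame $e_2'=m\otimes e_1$;
\item change of frame, via Lemma~\ref{lemma:divided-diff-regular}.
\end{itemize}
Only then does it compute for ``a single pair''.

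As your argument shows, this reduction is logically superfluous. Lemma~\ref{lemma:pp-basic-properties}(v) rests only on the local expansion of Proposition~\ref{prop:szego-existence}, so it already holds for every acyclic $\LL$ and every frame $e$. The direct computation therefore settles the statement outright. Independence of $\LL$ and $e$ becomes a consequence of the final formula rather than an input.

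What the paper's route buys is a conceptual picture of why the answer is universal. The dependence on $\LL$ and $e$ enters only through factors like $m(\one)/m(\two)$ or $h(\one)/h(\two)$, and their deviation from $1$ produces a divided difference, hence a regular term. What your route buys is brevity, together with an explicit justification of the localization step. Via Lemma~\ref{lemma:pp-basic-properties}(ii), the quotient $\overline{W}_D(\OO)$ is the space of global sections of a sheaf supported on the finite set $\supp{D}\times\supp{D}$. So the class of $S_{\LL}^e|_{D\times U}$ is the sum of its germs, and the off-diagonal germs at $(y,y')$ with $y\neq y'$ vanish. The paper leaves this implicit in Lemma~\ref{lemma:pp-basic-properties}(i) and (v).

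One small imprecision: $W_D(\OO)$ splits over pairs $(y,y')$ of points of $\supp{D}$, not over single points $y$ as your opening sentence says. Your final paragraph uses the correct decomposition.
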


\begin{proof}
The proof proceeds in three steps.

\medskip
\textbf{Step 1: Matched twisting.}
Given any two acyclic line bundles $\LL_1$ and $\LL_2$, the difference $\MM = \LL_2 \otimes \LL_1^{-1}$ is a line bundle of degree 0. Since $\supp{D}$ is a finite set of points, we can find an open neighbourhood $V \supseteq \supp{D}$ on which $\MM$ is trivial: take pairwise disjoint affine neighbourhoods of each point in the support, and trivialize $\MM$ on each.

Let $m$ be a nowhere-vanishing section of $\MM$ on $V$. Define a frame for $\LL_2$ by
\[
e_2' = m \otimes e_1.
\]
By the twisting formula (Proposition \ref{prop:twisting-szego}), the difference
\[
S_{\LL_2}^{e_2'} - S_{\LL_1}^{e_1}
\]
is a holomorphic section of $\prII^* \OmegaOne$ near $\supp{D} \times \supp{D}$. By the regularity property of principal parts (Lemma \ref{lemma:pp-basic-properties}(iii)), its principal part vanishes. Therefore
\[
\PP_D(S_{\LL_2}^{e_2'}) = \PP_D(S_{\LL_1}^{e_1}).
\]

\medskip
\textbf{Step 2: Change of frame.}
Now consider two frames $e$ and $e'$ of the \textbf{same} line bundle $\LL$ near $\supp{D}$. They differ by a nowhere-vanishing function $h$, i.e., $e' = h e$. By Lemma \ref{lemma:divided-diff-regular}, the principal part is invariant under this change:
\[
\PP_D(S_{\LL}^{e'}) = \PP_D(S_{\LL}^e).
\]

Combining Steps 1 and 2: the principal part $\PP_D(S_{\LL}^e)$ is independent of both the choice of acyclic line bundle $\LL$ and the choice of frame $e$, as long as $e$ is defined near $\supp{D}$.

\medskip
\textbf{Step 3: One explicit computation.}
By Steps 1 and 2, it suffices to compute $\PP_D(S_{\LL}^e)$ for a single pair $(\LL, e)$.

From Lemma \ref{lemma:pp-basic-properties}(v), we have the local formula \eqref{eq:pp-local-shape}:
\[
\PP_D(S_{\LL}^e) = \sum_{y \in \supp{D}} \sum_{k=0}^{m_y - 1} u_\one^k \otimes \frac{du_{\two}}{u_{\two}^{k+1}}.
\]
By Lemma \ref{lemma:split-casimir}, this sum is exactly $\inc(\CasFin)$, the scalar extension of the finite residue Casimir. This proves \eqref{eq:frame-coeff-pp}.
\end{proof}

We can now state and prove the main theorem in its intrinsic, frame-independent form.

\begin{theorem}[Main Theorem: Finite Residue--Cauchy Theorem]
\label{thm:main-theorem}
For every acyclic line bundle $\LL$ on $X_{\Cinf}$, every frame $e$ of $\LL$ on a neighbourhood of $\supp{D}$, and every choice of local coordinates,
\begin{equation}
\label{eq:main-theorem}
\UpsilonFrame \left( \PP_D(S_{\LL}) \right) = \inc(\CasFin).
\end{equation}
In particular, the left-hand side is independent of all choices of frame and coordinates.
\end{theorem}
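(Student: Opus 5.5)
The theorem should follow formally from Proposition~\ref{prop:frame-coeff-pp} once the frame identification on principal parts is moved inside the principal part map. The chain of identities is
\[
\UpsilonFrame\bigl(\PP_D(S_{\LL})\bigr)=\Upsilon_e^D\bigl(\PP_D(S_{\LL})\bigr)=\PP_D\bigl(\Upsilon_e^U(S_{\LL}|_{U\times U})\bigr)=\PP_D(S_{\LL}^e)=\inc(\CasFin).
\]
The first equality is notation: $\UpsilonFrame$ is $\Upsilon_e^D$, after identifying $\overline{W}_D(\LL)$ with the model space $W_D(\LL)$ via Lemma~\ref{lemma:pp-basic-properties}(ii). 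The second equality is Corollary~\ref{cor:frame-pp-szego}, which is Proposition~\ref{prop:frame-pp-commute} applied to $s=S_{\LL}|_{U\times U}$. The third is the definition of the $e$-coefficient. The fourth is Proposition~\ref{prop:frame-coeff-pp}.

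I will carry this out in three steps.

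\textbf{Step 1: choose the neighbourhood.} Fix an affine neighbourhood $U\supseteq\supp{D}$ on which $e$ is a frame, shrinking if necessary. This is harmless because the identification in Lemma~\ref{lemma:pp-basic-properties}(ii) is compatible with shrinking $U$. The restriction of the global section $S_{\LL}$ to $U\times U$ is then a legitimate input for $\PP_D$, and $\PP_D(S_{\LL})$ does not depend on this choice of $U$.

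\textbf{Step 2: apply the chain.} Invoke Corollary~\ref{cor:frame-pp-szego} and then Proposition~\ref{prop:frame-coeff-pp} to obtain the identities above.

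\textbf{Step 3: the independence clause.} The right-hand side $\inc(\CasFin)$ is defined purely over $\F_q$ and then scalar-extended, so it involves no frame, coordinate, or choice of $\LL$. Hence the left-hand side is independent of all of them. Coordinates enter only through the local shape formula \eqref{eq:pp-local-shape}. Independence of coordinates therefore also follows from the uniformizer-independence part of Lemma~\ref{lemma:split-casimir}.

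The only delicate point I expect is consistency of identifications. $\PP_D(S_{\LL})$ lives intrinsically in the quotient $\overline{W}_D(\LL)$, while $\Upsilon_e^D$ is defined on $W_D(\LL)$. I must check that the Künneth isomorphism of Lemma~\ref{lemma:pp-basic-properties}(ii) intertwines restriction to $D\times U$ with the pure-tensor description used in the proof of Proposition~\ref{prop:frame-pp-commute}. This should amount to the functoriality statement (iv) with $\varphi=e:\OO_U\to\LL|_U$. In that case (iv) gives exactly $\UpsilonFrame(\PP_D(s))=\PP_D(\Upsilon_e^U(s))$. So I would cite (iv) directly as a second route to the key identity, making the argument independent of the local computation.
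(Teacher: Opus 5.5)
Your proposal is correct and is essentially the paper's proof. The paper also uses functoriality of the principal part (Lemma~\ref{lemma:pp-basic-properties}(iv), equivalently Corollary~\ref{cor:frame-pp-szego}) to rewrite $\UpsilonFrame(\PP_D(S_{\LL}))$ as $\PP_D(S_{\LL}^e)$, and then invokes Proposition~\ref{prop:frame-coeff-pp}; your remarks on shrinking $U$ and on the independence clause are consistent with this and add only bookkeeping.
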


\begin{proof}
By the functoriality of the principal part map (Lemma \ref{lemma:pp-basic-properties}(iv)), we have
\[
\UpsilonFrame(\PP_D(S_{\LL})) = \PP_D(\Upsilon_e^U S_{\LL}) = \PP_D(S_{\LL}^e).
\]
By Proposition \ref{prop:frame-coeff-pp}, this equals $\inc(\CasFin)$, which is \eqref{eq:main-theorem}.
\end{proof}

This theorem is the conceptual core of the paper. It says that the geometric operation of taking the principal part of the Szeg\H{o} kernel---a purely analytic-geometric construction---produces exactly the algebraic Casimir tensor that represents the identity map under finite residue duality. The independence of the result from the choice of line bundle and frame is a manifestation of the canonical nature of residue duality.

\subsection{The Reproducing Property}
\label{subsec:reproducing-property}

The Szeg\H{o} kernel enjoys a \textbf{reproducing property}: when contracted with an
arbitrary section through the residue pairing along the divisor \(D\), it returns the
restriction of that section to \(D\). This is the function-field analogue of the
classical Cauchy integral formula, in which a holomorphic function is recovered by
integrating it against the Cauchy kernel along a contour; the precise dictionary is
spelled out in Remark~\ref{remark:cauchy-analogue} below.

The proof is direct and purely local: it relies only on the normalization
\(\res_\Delta(S_\LL) = 1\) of Proposition~\ref{prop:szego-existence}, the truncation
identity, and elementary residue calculus; in particular, it is independent of the
main theorem. The two statements are nonetheless closely related: the reproducing
property says that \(\PP_D(S_\LL)\) is the identity tensor for the geometric residue
pairing, while Lemma~\ref{lem:basis_independence} says that the finite Casimir tensor plays
the same role for the algebra-side pairing, and the main theorem identifies the two
tensors. In this sense, the reproducing property is the geometric half of the main
theorem.

\begin{theorem}[Reproducing property]
\label{thm:reproducing-property}
Let \(\LL\) be acyclic, and let \(\tilde{\alpha} \in H^0(U, \LL)\) be a section defined on a neighbourhood \(U\) containing \(\supp{D}\). Then the contraction against \(\tilde{\alpha}|_D\) gives
\[
\Bigl( \id \otimes \bigl\langle \tilde{\alpha}|_D, \cdot \bigr\rangle_D \Bigr) \bigl( \PP_D(S_{\LL}) \bigr)
=
\sum_{y \in \supp{D}} \res_y^{\two} \bigl( S_{\LL} \cdot \tilde{\alpha}(\two) \bigr)
= \tilde{\alpha}|_D .
\]
Moreover, if \(b \in H^0(U,\OO)\) is a regular function, then
\[
\Bigl( \id \otimes \bigl\langle b|_D, \cdot \bigr\rangle_D \Bigr) \bigl( \PP_D(S_{\LL}^e) \bigr)
=
\sum_{y \in \supp{D}} \res_y^{\two} \bigl( S_{\LL}^e \cdot b(\two) \bigr)
= b|_D .
\]
\end{theorem}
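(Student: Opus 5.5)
The reproducing property can be proved directly and locally, without invoking the main theorem, using the local shape of the Szeg\H{o} kernel from Proposition~\ref{prop:szego-existence} and the exact truncation identity of Lemma~\ref{lemma:pp-basic-properties}(i).

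\textbf{Step 1: reduce the first equality to a definition.} The contraction $\id\otimes\langle\tilde\alpha|_D,\cdot\rangle_D$ applied to a class in $\overline{W}_D(\LL)$ amounts to multiplying the representative $S_{\LL}|_{D\times U}$ by $\tilde\alpha(\two)$ and summing residues $\res_y^{\two}$ over $y\in\supp D$. Indeed, on a pure tensor $m\otimes[\xi]$ the pairing is $m\cdot\sum_y\res_y(\tilde\alpha\,\tilde\xi)$. This is well defined because changing the representative by a section of the denominator (holomorphic in $\two$) multiplied by the regular $\tilde\alpha(\two)$ gives something regular, hence zero residues. So the first equality follows from Lemma~\ref{lemma:pp-basic-properties}(ii) and the definition of the pairing.

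\textbf{Step 2: compute the residue point by point.} Work near a point $y\in\supp D$ of multiplicity $m$, with uniformizer $u$ and frame $e$. Write $\tilde\alpha=a\,e$ with $a$ holomorphic. By \eqref{eq:szego-local-expansion},
\[
S_{\LL}\cdot\tilde\alpha(\two)=\frac{a(u_{\two})\,du_{\two}}{u_{\two}-u_\one}\,e(\one)+\epsilon\cdot\tilde\alpha(\two),
\]
where the second term is regular in $\two$ and so contributes no residue. Points of $\supp D$ other than $y$ give no singular contribution on the component $D_y\times U$, since $S_{\LL}$ is regular off the diagonal. After restricting to $D_y$ in the first variable, the truncation identity \eqref{eq:exact-truncation} gives
\[
\res_{u_{\two}=0}\sum_{j<m}\frac{u_\one^j\,a(u_{\two})}{u_{\two}^{j+1}}\,du_{\two}=\sum_{j<m}a_j\,u_\one^j,
\]
where $a=\sum_j a_j u^j$. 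This is exactly $a$ modulo $u^m$, that is, $\tilde\alpha|_{D_y}$. Summing over $y$ gives $\tilde\alpha|_D$.

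\textbf{Step 3: the frame-coefficient version.} The same computation applies with $S_{\LL}^e=du_{\two}/(u_{\two}-u_\one)+\epsilon$ and the regular function $b$ in place of $a$. Alternatively, one can apply Proposition~\ref{prop:frame-pp-commute} to transport the first statement.

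\textbf{Main obstacle.} The delicate point is justifying that the residue is taken on $D\times U$, where the first variable is nilpotent. One must check that $\res^{\two}_y$ makes sense $\OO_D$-linearly, as a residue with coefficients in $\OO_{D}$, and that only the component of $D$ at $y$ interacts with the pole at $y$. I would handle this using the K\"unneth decomposition $H^0(D,\LL|_D)\otimes H^0(U,\cdot)$ from Lemma~\ref{lemma:pp-basic-properties}(ii), which makes the residue act on the second tensor factor only.
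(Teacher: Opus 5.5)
Your proposal is correct and follows essentially the same route as the paper: you localize at each $y\in\supp{D}$, use the local expansion of $S_{\LL}$ near $(y,y)$, discard the regular part $\epsilon$, and extract coefficients via the exact truncation identity on $D\times U$. The difference is only organizational: you identify the contraction with the residue sum once and for all by unwinding the K\"unneth identification of Lemma~\ref{lemma:pp-basic-properties}(ii), whereas the paper computes the contraction (using the dual bases $u^k e$ and $\frac{du}{u^{k+1}}e^{-1}$) and the residue sum separately and checks that each equals $\tilde\alpha|_D$.
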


\begin{proof}
We proceed in several steps.

\textbf{Step 1: The contraction map.}
By definition,
\[
W_D(\LL) = H^0(D,\LL|_D) \otimes_{\Cinf} H^0\Bigl( D, \frac{(\LL^{-1}\otimes\OmegaOne)(D)}{\LL^{-1}\otimes\OmegaOne} \Bigr),
\]
and the contraction
\[
\mathcal{C}_{\tilde{\alpha}} := \id \otimes \langle \tilde{\alpha}|_D, \cdot \rangle_D : W_D(\LL) \longrightarrow H^0(D, \LL|_D)
\]
is the \(\Cinf\)-linear map determined on pure tensors by
\[
\mathcal{C}_{\tilde{\alpha}}(a \otimes \xi) := \langle \tilde{\alpha}|_D, \xi \rangle_D \cdot a,
\]
for \(a \in H^0(D, \LL|_D)\) and \(\xi \in H^0\bigl( D, \frac{(\LL^{-1}\otimes\OmegaOne)(D)}{\LL^{-1}\otimes\OmegaOne} \bigr)\), and extended by linearity. Under the identification \(W_D(\LL) \cong \End(H^0(D,\LL|_D))\), which sends \(a \otimes \xi\) to the endomorphism \(v \mapsto \langle v, \xi \rangle_D\, a\), the map \(\mathcal{C}_{\tilde{\alpha}}\) is simply evaluation at \(\tilde{\alpha}|_D\). The proposition therefore asserts that \(\PP_D(S_{\LL})\) acts as the identity.

\textbf{Step 2: Localization.}
Since \(D\) is the disjoint union of the fat points \(m_y[y]\), the Chinese remainder theorem decomposes both factors of \(W_D(\LL)\), as well as the pairing \(\langle\cdot,\cdot\rangle_D\), into direct sums over \(y \in \supp{D}\); correspondingly, \(\PP_D(S_{\LL})\) is the sum of its components in the local factors. It therefore suffices to fix a point \(y \in \supp{D}\), put \(m := m_y\), choose a uniformizer \(u\) at \(y\) and a frame \(e\) of \(\LL\) near \(y\), and verify both equalities in the \(y\)-component.

\textbf{Step 3: Local normal forms.}
Since \(u^m = 0\) on the local factor of \(D\) at \(y\), the restriction of \(\tilde{\alpha}\) to \(H^0(m[y], \LL|_{m[y]}) \cong \Cinf[u]/(u^m)\cdot e\) admits a unique expansion
\begin{equation}\label{eq:alpha-local-expansion}
\tilde{\alpha}|_D = \sum_{j=0}^{m-1} a_j \, u^j e, \qquad a_j \in \Cinf .
\end{equation}
On the other hand, by the local shape of the principal part (Lemma~\ref{lemma:pp-basic-properties}(v)), the \(y\)-component of \(\PP_D(S_{\LL})\) is
\begin{equation}\label{eq:pp-local-shape2}
\PP_D(S_{\LL})\big|_y = \sum_{k=0}^{m-1} u_\one^k e(\one) \otimes \frac{du_\two}{u_\two^{k+1}} e(\two)^{-1} \;+\; \rho,
\end{equation}
where \(\rho\) is a finite sum of pure tensors whose second factors are classes of differentials regular at \(y\) in the second variable.

\textbf{Step 4: The regular remainder pairs to zero.}
Let \(\xi\) be such a regular second factor, represented near \(y\) by a section \(\tilde{\xi}\) of \(\LL^{-1} \otimes \OmegaOne\) without poles. Then the product \(\tilde{\alpha} \cdot \tilde{\xi}\) is a regular differential near \(y\), and a regular differential has vanishing residue; hence
\[
\langle \tilde{\alpha}|_D, \xi \rangle_D = \res_y( \tilde{\alpha} \cdot \tilde{\xi} ) = 0 .
\]
Consequently \(\mathcal{C}_{\tilde{\alpha}}(\rho) = 0\), and only the singular sum in \eqref{eq:pp-local-shape2} contributes to the contraction. (This is the same mechanism as the regularity property of principal parts, Lemma~\ref{lemma:pp-basic-properties}(iii).)

\textbf{Step 5: Coefficient extraction.}
Fix \(0 \le k \le m-1\). Writing \(\tilde{\alpha}=a(u)e\) with \(a(u)=\sum_{j=0}^{m-1}a_j u^j\), and taking \(\xi_k=\frac{du}{u^{k+1}}e^{-1}\), the product is
\[
\tilde{\alpha} \cdot \xi_k
= \Bigl( \sum_{j=0}^{m-1} a_j u^j \Bigr) \frac{du}{u^{k+1}}
= \sum_{j=0}^{m-1} a_j \, u^{\,j-k-1} \, du .
\]
By the definition of the local residue as the coefficient of \(u^{-1}\), we have \(\res_y( u^n du ) = \delta_{n,-1}\) for every \(n \in \mathbb{Z}\). Among the exponents \(j - k - 1\) with \(0 \le j \le m-1\), exactly one equals \(-1\), namely the one with \(j = k\). Therefore
\begin{equation}\label{eq:residue-coefficient}
\res_y\Bigl( \tilde{\alpha} \cdot \frac{du}{u^{k+1}} \Bigr) = a_k,
\qquad 0 \le k \le m-1 .
\end{equation}

\textbf{Step 6: Assembly of the contraction.}
Combining \eqref{eq:pp-local-shape2}, Step 4, and \eqref{eq:residue-coefficient},
\[
\mathcal{C}_{\tilde{\alpha}}\bigl( \PP_D(S_{\LL}) \bigr)\Big|_y
= \sum_{k=0}^{m-1} \res_y\Bigl( \tilde{\alpha} \cdot \frac{du}{u^{k+1}} \Bigr) \, u_\one^k e(\one)
= \sum_{k=0}^{m-1} a_k \, u_\one^k e(\one)
= \tilde{\alpha}|_D \Big|_y .
\]

\textbf{Step 7: The middle expression, vector-valued case.}
It remains to see that the middle term computes the same contraction directly from the kernel, without passing through the quotient description of the principal part. Near the diagonal point \((y,y)\), the normalized Szeg\H{o} kernel admits the local expansion
\[
S_{\LL} = \frac{du_\two}{u_\two - u_\one}\, e(\one) \otimes e(\two)^{-1} + \epsilon ,
\]
with \(\epsilon\) holomorphic in both variables. Writing \(\tilde{\alpha}(\two)=a(u_\two)e(\two)\) with \(a(u_\two)=\sum_{j=0}^{m-1} a_j u_\two^j\), the frames cancel and
\[
S_{\LL} \cdot \tilde{\alpha}(\two)
= \frac{a(u_\two) \, du_\two}{u_\two - u_\one}\, e(\one) \;+\; \epsilon \cdot \tilde{\alpha}(\two),
\]
where the second summand is regular in the second variable near \(u_\two = 0\) and thus has zero residue there. For the first summand, restrict the first variable to the local factor of \(D\) at \(y\) (that is, work modulo \(u_\one^m\)) and invoke the exact truncation identity (Lemma~\ref{lemma:pp-basic-properties}(I)):
\[
\frac{1}{u_\two - u_\one} \Big|_{u_\one^m = 0} = \sum_{k=0}^{m-1} \frac{u_\one^k}{u_\two^{k+1}} .
\]
Taking the residue at \(u_\two = 0\) coefficientwise in the nilpotent variable \(u_\one\) therefore yields
\[
\res_y^{\two} \bigl( S_{\LL} \cdot \tilde{\alpha}(\two) \bigr)
= \sum_{k=0}^{m-1} u_\one^k \, \res_y\Bigl( \tilde{\alpha} \cdot \frac{du}{u^{k+1}} \Bigr) e(\one)
= \sum_{k=0}^{m-1} a_k \, u_\one^k e(\one),
\]
which is precisely the \(y\)-component computed in Step 6. Poles in the second variable at \(y\) can occur only along the diagonal, so working near \((y,y)\) misses no contribution.

\textbf{Step 8: The scalar coefficient case.}
Let \(b\in H^0(U,\OO)\) and write \(b(u_\two)=\sum_{j=0}^{m-1}a_j u_\two^j\) near \(y\). The frame-coefficient kernel has the local form
\[
S_{\LL}^e = \frac{du_\two}{u_\two-u_\one} + \epsilon_0,
\]
with \(\epsilon_0\) regular in both variables. Multiplying by \(b(\two)\) gives
\[
S_{\LL}^e \cdot b(\two)
= \frac{b(u_\two)\,du_\two}{u_\two-u_\one} + \epsilon_0\,b(\two).
\]
As before, the second summand has residue zero. Restricting to \(u_\one^m=0\) and using the same truncation identity, we obtain
\[
\res_y^{\two}\bigl( S_{\LL}^e \cdot b(\two) \bigr)
= \sum_{k=0}^{m-1} u_\one^k \res_y\Bigl( b(u_\two)\frac{du_\two}{u_\two^{k+1}} \Bigr)
= \sum_{k=0}^{m-1} a_k u_\one^k
= b|_D \Big|_y.
\]
This proves the second asserted equality.

\textbf{Conclusion.}
Steps 6--8 show that, in each local factor, both the contraction of \(\PP_D(S_{\LL})\) and the residue of \(S_{\LL} \cdot \tilde{\alpha}(\two)\) reproduce the expansion \eqref{eq:alpha-local-expansion} of \(\tilde{\alpha}|_D\); the same holds for the scalar version with \(b\). Summing over all \(y \in \supp{D}\) gives the two stated equalities. Finally, the equivalence with the endomorphism form follows from Step 1: a tensor in \(W_D(\LL)\) whose contraction with \(\tilde{\alpha}|_D\) equals \(\tilde{\alpha}|_D\) for every section \(\tilde{\alpha}\) defined near \(D\) corresponds to an endomorphism fixing every element of \(H^0(D,\LL|_D)\), because the restrictions of such sections exhaust \(H^0(D,\LL|_D)\) by the surjectivity of the restriction map (Lemma~\ref{lemma:pp-basic-properties}(ii)); such an endomorphism is the identity.
\end{proof}

\begin{remark}[Relation with the classical Cauchy integral formula]
\label{remark:cauchy-analogue}
In complex analysis, Cauchy's integral formula states that if \(f\) is holomorphic on a domain containing the closed disk bounded by a positively oriented contour \(\gamma\), and \(z\) lies in the interior of \(\gamma\), then
\[
f(z) = \frac{1}{2\pi i} \oint_\gamma \frac{f(w)}{w - z} \, dw
= \Res_{w=z} \Bigl( \frac{dw}{w - z} \, f(w) \Bigr).
\]
The second expression is the residue form of the first: writing \(f(w) = f(z) + (w-z)h(w)\) with \(h\) holomorphic, the differential \(\frac{f(w)}{w-z}\,dw\) has a single simple pole at \(w = z\), of residue \(f(z)\), and the residue theorem converts the contour integral into this residue.

Theorem~\ref{thm:reproducing-property} is the function-field analogue of this identity under the following dictionary: the contour \(\gamma\) is replaced by the divisor \(D\); the contour integral \(\frac{1}{2\pi i}\oint_\gamma\) by the residue sum \(\sum_{y \in \supp{D}} \res_y^{\two}\) (composed with field traces at non-rational points, before base change); the Cauchy kernel \(\frac{dw}{w-z}\) by the normalized Szeg\H{o} kernel \(S_{\LL}\); and the point value \(f(z)\) by the restriction \(\tilde{\alpha}|_D\). The normalizing constant \(\frac{1}{2\pi i}\) has no function-field counterpart, since the residue is defined algebraically as coefficient extraction; this is one reason the formula remains valid in arbitrary characteristic.

The correspondence is literal in the simplest case. If \(D = [z_0]\) is a single reduced point, the proposition reduces to
\[
\tilde{\alpha}|_{z_0} = \res^{\two}_{y = z_0} \bigl( S_{\LL} \cdot \tilde{\alpha}(\two) \bigr),
\]
and on the projective line, where \(S_{\LL}\) is the Cauchy kernel \(\frac{dt_\two}{t_\two - t_\one}\) up to a twisting factor regular along \(D\), this is exactly the residue form of Cauchy's formula, with contour integration replaced by residue extraction.  
\end{remark}

\begin{corollary}[Endomorphism form of principal parts]
\label{cor:endomorphism-form}
Under the canonical identifications
\[
W_D(\LL) \cong \End\bigl( H^0(D, \LL|_D) \bigr)
\]
and\[
\widetilde{W}_D(\LL) \cong \End\bigl( H^0(D, (\LL^{-1} \otimes \OmegaOne)|_D) \bigr)
\]
 established in Proposition~\ref{prop:endomorphism-interpretation}, we have
\[
\PP_D(S_{\LL})=\PP_D(S_{\LL}^e) = \id,
\qquad
\tPP_D(S_{\LL})=\tPP_D(S_{\LL}^e) = -\id.
\]
The sign reflects the antisymmetry of \(\frac{du_\two}{u_\two - u_\one}\) under exchange of the two variables.
\end{corollary}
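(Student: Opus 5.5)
The plan is to treat the four identities in two groups: the ordinary principal parts, which follow from the reproducing property, and the transposed ones, which follow from the flip relation or from a parallel transposed reproducing computation.

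\textbf{Ordinary principal parts.} By Proposition~\ref{prop:endomorphism-interpretation}, $\Phi_D(\PP_D(S_{\LL}))$ is the endomorphism $v\mapsto (\id\otimes\langle v,\cdot\rangle_D)(\PP_D(S_{\LL}))$. Every $v\in H^0(D,\LL|_D)$ is the restriction of some $\tilde\alpha\in H^0(U,\LL)$, by the surjectivity in Lemma~\ref{lemma:pp-basic-properties}(ii). Theorem~\ref{thm:reproducing-property} then gives $\Phi_D(\PP_D(S_{\LL}))(v)=v$, so $\Phi_D(\PP_D(S_{\LL}))=\id$.

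The scalar version goes the same way. Here $W_D(\OO)\cong\End(H^0(D,\OO))$, and the second part of Theorem~\ref{thm:reproducing-property} gives $\PP_D(S^e_{\LL})=\id$. Alternatively, one can use Theorem~\ref{thm:main-theorem} together with Lemma~\ref{lem:basis_independence}, since $\inc(\CasFin)$ corresponds to the identity.

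The identification $\PP_D(S_{\LL})=\PP_D(S_{\LL}^e)$ is to be read through $\UpsilonFrame$. One checks, by Proposition~\ref{prop:frame-pp-commute}, that $\UpsilonFrame$ intertwines the two endomorphism isomorphisms. Concretely, conjugating by multiplication by $e$ on $H^0(D,\LL|_D)$ carries $\id$ to $\id$, because the pairing $\langle fe,ge^{-1}du\rangle=\langle f,g\,du\rangle$ is frame-invariant.

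\textbf{Transposed principal parts.} Here $\widetilde W_D(\LL)$ is $H^0(D,\LL(D)/\LL)\otimes H^0(D,(\LL^{-1}\otimes\OmegaOne)|_D)$, and we identify it with $\End(H^0(D,(\LL^{-1}\otimes\OmegaOne)|_D))$ by pairing the first factor against the argument. The claim is that contraction of $\tPP_D(S_{\LL})$ against a section $\tilde\beta$ of $\LL^{-1}\otimes\OmegaOne$ near $D$, via $\sum_y\res^{\one}_y$, returns $-\tilde\beta|_D$.

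I would prove this by repeating Steps 3--7 of the proof of Theorem~\ref{thm:reproducing-property} with the roles of the variables exchanged:
\begin{enumerate}
\item Localize at $y\in\supp D$ and write $S_{\LL}=\frac{du_\two}{u_\two-u_\one}e(\one)\otimes e(\two)^{-1}+\epsilon$, discarding $\epsilon$ since it is regular in the first variable.
\item Use the signed truncation $\frac{1}{u_\two-u_\one}\big|_{u_\two^m=0}=-\sum_{j<m}u_\two^j/u_\one^{j+1}$.
\item Write $\tilde\beta=b(u)\,du\,e^{-1}$ and compute $\res^{\one}_y$ of $\tilde\beta(\one)\cdot S_{\LL}$. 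This gives $-\sum_j b_j\,u_\two^j\,du_\two\,e(\two)^{-1}=-\tilde\beta|_D$.
\end{enumerate}
The scalar version follows by the same computation with $e$ stripped off. As a cross-check, Lemma~\ref{lemma:flip-pp} gives $\tPP_D(S^e_{\LL})=-\mathcal T_\omega(\PP_D(S^e_{\LL}))$. One verifies that $\mathcal T_\omega$ transports $\id$ to $\id$, being the transpose under the identification $\OmegaOne|_D\cong\OO_D$ via $\omega$, so the result is $-\id$.

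\textbf{Main obstacle.} The hard step is making the pairing conventions for $\widetilde W_D$ precise: which factor pairs with which, and that residues are taken in the first variable. Only with these fixed does the sign come out as $-\id$ rather than depending on a convention. I would fix the convention so that the contraction uses the residue pairing between $\LL(D)/\LL$ and $(\LL^{-1}\otimes\OmegaOne)|_D$, which is the Serre pairing of Lemma~\ref{lemma:serre-duality-D} with $\mathcal M=\LL^{-1}\otimes\OmegaOne$. I would then check the single-point, $m=1$ case explicitly to confirm the sign before the general computation.
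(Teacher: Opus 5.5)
Your proposal is correct and follows essentially the same route as the paper. For the ordinary principal part, the paper reads off the identity tensor directly from the local normal form via the dual bases $\{u^k e\}$ and $\{du/u^{k+1}\,e^{-1}\}$, and it explicitly notes your reproducing-property-plus-surjectivity argument as an equivalent alternative. For the transposed part, it performs your computation: the signed truncation identity followed by coefficient extraction in the first variable, yielding $-\id$. Your cross-check via Lemma~\ref{lemma:flip-pp} and your explicit treatment of the frame identification go beyond what the paper writes out but are consistent with it.
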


\begin{proof}
For the first assertion, fix \(y \in \supp{D}\) with multiplicity \(m\) and recall the local normal form \eqref{eq:pp-local-shape2}. Applying \eqref{eq:residue-coefficient} with \(\tilde{\alpha} = u^i e\) for each \(0 \le i < m\) shows that the families \(\{ u^k e \}_{0 \le k < m}\) and \(\{ \frac{du}{u^{k+1}} e^{-1} \}_{0 \le k < m}\) are dual bases for the local pairing; hence, modulo the terms \(\rho\) that pair to zero (Step 4 of the proof of Theorem~\ref{thm:reproducing-property}), the right-hand side of \eqref{eq:pp-local-shape2} is precisely the tensor of the identity endomorphism of the local factor \(\Cinf[u]/(u^m)\, e\). Summing over \(y \in \supp{D}\) gives \(\PP_D(S_{\LL}) = \id\). Equivalently, this follows from the proposition together with the surjectivity of the restriction \(H^0(U, \LL) \to H^0(D, \LL|_D)\).

For the second assertion, we repeat the computation with the roles of the two variables exchanged. Restricting the local expansion of \(S_{\LL}\) to \(U \times D\) --- that is, imposing \(u_\two^m = 0\) --- we use the signed truncation identity
\[
\frac{1}{u_\two - u_\one} \Big|_{u_\two^m = 0} = - \sum_{j=0}^{m-1} \frac{u_\two^{\,j}}{u_\one^{\,j+1}},
\]
the minus sign arising from \(u_\one - u_\two = -(u_\two - u_\one)\). Since the holomorphic remainder \(\epsilon\) has a regular first factor, its class in the transposed quotient vanishes, and we obtain the local normal form
\[
\tPP_D(S_{\LL})\big|_y
= - \sum_{j=0}^{m-1} \Bigl[ \frac{e}{u^{j+1}} \Bigr]_{\one} \otimes \bigl( u^j \, du \cdot e^{-1} \bigr)_{\two},
\]
where \(\left[ \frac{e}{u^{j+1}} \right]\) denotes the class of the polar section \(\frac{e}{u^{j+1}}\) in \(H^0(D, \LL(D)/\LL)\) (first variable) and \(u^j du \cdot e^{-1}\) is a regular differential class in \(H^0(D, (\LL^{-1}\otimes\OmegaOne)|_D)\) (second variable). These two families are paired by the residue pairing, which is well defined on the polar quotient because altering the representative by a regular section changes the product by a regular differential. The same coefficient-extraction computation as in Step 5 gives
\[
\res_y\Bigl( \frac{e}{u^{j+1}} \cdot u^i du \, e^{-1} \Bigr)
= \res_y(u^{i-j-1}du)
= \delta_{ij},
\]
so the polar classes and the regular classes form dual bases for the transposed pairing. Consequently, contracting \(\tPP_D(S_{\LL})\) with any regular differential class \(\eta = \sum_i b_i u^i du \cdot e^{-1}\) returns \(-\eta\); in other words, \(\tPP_D(S_{\LL})\) acts as minus the identity on \(H^0(D, (\LL^{-1}\otimes\OmegaOne)|_D)\). Summing over \(\supp{D}\) yields \(\tPP_D(S_{\LL}) = -\id\).
\end{proof}

\begin{remark}
Conversely, the corollary combined with Corollary~\ref{cor:frame-pp-szego}
and Lemma~\ref{lem:basis_independence} reproves the main theorem: by the
corollary and the frame compatibility of the residue pairing,
\(\PP_D(S^e_\LL)\) acts as the identity on \(H^0(D,\OO)\); by
Lemma~\ref{lem:basis_independence}, transported along \(\inc\) via
Lemma~\ref{lemma:split-casimir}, so does \(\inc(\CasFin)\); and the
endomorphism interpretation \(W_D(\OO)\cong\End(H^0(D,\OO))\) of
Proposition~\ref{prop:endomorphism-interpretation} is injective. The direct
local proof given above avoids any dependence on the main theorem.
\end{remark}

\subsection{The Weil Operator}
\label{sec:weil-operator}

Next, we define the rank-two Weil operator as the scalar part of the finite residue Casimir, after fixing a local generator of the differential module. We then prove the local decomposition theorem, which expresses the Weil operator as a Chinese-remainder assembly of local divided differences, and establish its symmetry.

Fix a differential form $\omega$ that generates $\OmegaOne$ locally near $\supp{D}$. For $A = \F_q[t]$, one may take $\omega = dt$, which is a global generator. In general, such a global generator need not exist, but a local generator always exists near a finite set of points.

Multiplication by $\omega / f$ gives an isomorphism of $A / \f$-modules
\[
\iota_{\omega_{\f}} : V_{\f} \stackrel{\sim}{\longrightarrow} V_{\f}^{\dagger}, \qquad \iota_{\omega_{\f}}(\overline{a}) = \overline{a \cdot \frac{\omega}{f}}.
\]
This isomorphism allows us to convert the Casimir tensor (which lives in $V_{\f} \otimes V_{\f}^{\dagger}$) into an element of $V_{\f} \otimes V_{\f}$.

\begin{definition}[Weil operator]
The \textbf{rank-two Weil operator} associated to $\omega$ and $\f$ is
\begin{equation}
\label{eq:weil-operator-def}
\WeilOp = \left( \id \otimes \iota_{\omega_{\f}}^{-1} \right) (\CasFin) \quad \in V_{\f} \otimes_{\F_q} V_{\f}.
\end{equation}
Equivalently, $\CasFin = \WeilOp \otimes \frac{\omega_{\two}}{f(\two)}$ in $V_{\f} \otimes V_{\f}^{\dagger}$.
\end{definition}

As a consequence of Proposition~\ref{prop:frame-coeff-pp}, we have the following relations.
\begin{proposition}
Let $ \WeilOp $ be the Weil operator. Then for every acyclic $\LL$ and frame $e$ along $|D|$, we have
    \[
    \PP_{D} S_\LL^e =  \frac{\WeilOp}{f(\two)} \omega(\two); \qquad  \tPP_{D} S_\LL^e =  - \frac{\WeilOp}{f(\one)} \omega(\two).
    \]
\end{proposition}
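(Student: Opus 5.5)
The first identity will follow by unwinding Proposition~\ref{prop:frame-coeff-pp} along the definition \eqref{eq:weil-operator-def}. The second identity will come from that one via the transposition formula of Proposition~\ref{prop:transposition-formula}, or alternatively via the flip relation of Lemma~\ref{lemma:flip-pp}.

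\textbf{Step 1 (ordinary principal part).} By Proposition~\ref{prop:frame-coeff-pp}, $\PP_D(S_\LL^e)=\inc(\CasFin)$. By the definition of the Weil operator, $\CasFin=(\id\otimes\iota_{\omega_\f})(\WeilOp)$. Write $\WeilOp=\sum_i a_i\otimes b_i$. Then
\[
\CasFin=\sum_i a_i\otimes \overline{b_i\,\omega/f}.
\]
Since $\inc$ is just scalar extension and commutes with multiplication by the fixed element $\omega/f$, we get
\[
\inc(\CasFin)=\sum_i a_i(\one)\otimes[b_i\omega/f](\two)=\frac{\WeilOp}{f(\two)}\,\omega(\two)
\]
in $W_D(\OO)$. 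This is the first formula. The main thing to check is that $\iota_{\omega_\f}$ is an isomorphism. This holds because $\omega$ generates $\OmegaOne$ near $\supp D$ and $f$ cuts out $D$ locally, so $\omega/f$ is a local generator of $\OmegaOne(D)$.

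\textbf{Step 2 (transposed principal part).} Here $D$ is cut out by $f$ near $\supp D$, so I apply Proposition~\ref{prop:transposition-formula} to $s=S_\LL^e$:
\[
\tPP_D(S_\LL^e)=-\mathcal T_f^{-1}\bigl(\PP_D(S_\LL^e)\bigr).
\]
By definition, $\mathcal T_f^{-1}$ sends $a(\one)\otimes b(\two)$ to $[a/f](\one)\otimes[bf](\two)$. Applying it to $\sum_i a_i(\one)\otimes[b_i\omega/f](\two)$ gives
\[
\sum_i [a_i/f](\one)\otimes (b_i\omega)(\two)=\frac{\WeilOp}{f(\one)}\,\omega(\two),
\]
with the second factor now read in $H^0(D,\OmegaOne|_D)$. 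With the sign this gives the second formula.

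\textbf{Main obstacle.} The hard part is Step 2, because Proposition~\ref{prop:transposition-formula} was only sketched. I also have to make sure that the classes match: $[a_i/f]$ must land in the polar quotient $H^0(D,\OO(D)/\OO)$, and the second factor must be a genuine restriction to $D$. So, as a backup, I would verify Step 2 directly through Lemma~\ref{lemma:flip-pp}. Applying $\mathcal T_\omega$ to the first formula gives
\[
\mathcal T_\omega\Bigl(\sum_i a_i\otimes[b_i\omega/f]\Bigr)=\sum_i[b_i/f](\one)\otimes(a_i\omega)(\two),
\]
which equals $-\tPP_D(S_\LL^e)$. This yields $\tPP_D(S_\LL^e)=-\mathcal T(\WeilOp)\,\omega(\two)/f(\one)$, where $\mathcal T$ is the flip on tensor products.

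That leaves the symmetry $\mathcal T(\WeilOp)=\WeilOp$, which I would check first. The route is Lemma~\ref{lemma:split-casimir}: choose local uniformizers $u=f/(\text{unit})$, so that locally $\WeilOp$ is the divided difference $(f(\one)-f(\two))/(u_\one-u_\two)$ times a unit factor. This expression is symmetric in the two variables.
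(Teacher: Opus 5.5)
Your proposal is correct and follows the route the paper intends. The first identity is Proposition~\ref{prop:frame-coeff-pp} unwound through $\CasFin=(\id\otimes\iota_{\omega_{\f}})(\WeilOp)$. The second comes from applying $-\mathcal{T}_f^{-1}$ via Proposition~\ref{prop:transposition-formula}, and needs no symmetry of $\WeilOp$.

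If you want to avoid relying on that sketched proposition, note that $\mathcal{T}_f^{-1}$ sends a dual-basis tensor $\sum_i v_i\otimes\xi_i$ to the dual-basis tensor $\sum_i [v_i/f]\otimes \xi_i f$, because $(v_i/f)(\xi_j f)=v_i\xi_j$. Then use $\tPP_D(S_\LL^e)=-\id$ from Corollary~\ref{cor:endomorphism-form}.

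Your backup symmetry check is not right as written. The element $f/(\text{unit})$ is a uniformizer only at points with $m_y=1$, and the leftover one-variable unit factor is not visibly symmetric. That route is unnecessary, and symmetry is in any case supplied by the self-adjointness argument of Theorem~\ref{thm:symmetry-weil}.
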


We now prove that the Weil operator is symmetric: it is invariant under swapping the two tensor factors. We give two proofs: one algebraic, using the self-adjointness of $\iota_{\omega_{\f}}$, and one geometric, using the local decomposition and the congruence $f \equiv 0 \pmod{\f}$.

\begin{theorem}[Symmetry of the Weil operator]
\label{thm:symmetry-weil}
Let $\mathcal{T}$ denote transposition of tensor factors. Then
\begin{equation}
\label{eq:weil-symmetry}
\mathcal{T}(\WeilOp) = \WeilOp.
\end{equation}
\end{theorem}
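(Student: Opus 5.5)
Since the Weil operator is obtained from the Casimir via $\iota_{\omega_{\f}}$, the natural approach is the algebraic one: translate symmetry of $\WeilOp$ into self-adjointness of a bilinear form. The relevant form is the pairing $\beta(a,b) := \pair{a}{\iota_{\omega_{\f}}(b)}_{\f} = \Res_{\Df}(ab\,\omega/f)$ on $V_{\f}$. It is well defined: changing $a$ or $b$ by an element of $\f$ changes $ab\,\omega/f$ by a differential regular along $\supp{\Df}$, and such a differential has zero residue. Because $V_{\f}$ is commutative, $\beta(a,b)=\beta(b,a)$. Since $\iota_{\omega_{\f}}$ is an isomorphism and $\pair{\cdot}{\cdot}_{\f}$ is perfect by Theorem~\ref{thm:perfect-pairing}, $\beta$ is a nondegenerate symmetric bilinear form on $V_{\f}$.

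Next we express $\WeilOp$ in bases. Let $\{\alpha_i\}$ be a basis of $V_{\f}$ with $\pair{\cdot}{\cdot}_{\f}$-dual basis $\{\alpha_i^\dagger\}$, and put $\beta_i := \iota_{\omega_{\f}}^{-1}(\alpha_i^\dagger)$. Then $\beta(\alpha_j,\beta_i)=\delta_{ij}$, so $\{\beta_i\}$ is the $\beta$-dual basis of $\{\alpha_i\}$. By definition \eqref{eq:weil-operator-def}, $\WeilOp=\sum_i \alpha_i\otimes\beta_i$. Applying the flip gives $\mathcal{T}(\WeilOp)=\sum_i\beta_i\otimes\alpha_i$.

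It remains to show that the element $\sum_i \alpha_i\otimes\beta_i$ is unchanged when the two bases are swapped. We argue exactly as in Lemma~\ref{lem:basis_independence}. The map $V_{\f}\otimes V_{\f}\to\End(V_{\f})$ given by $a\otimes b\mapsto (v\mapsto a\,\beta(v,b))$ is an isomorphism, since $\beta$ is nondegenerate. Under this map, $\sum_i\alpha_i\otimes\beta_i$ goes to the identity, by the computation in (i) of that lemma. Because $\beta$ is symmetric, $\beta(\beta_i,\alpha_j)=\delta_{ij}$ also holds, so $\{\alpha_i\}$ is the $\beta$-dual basis of $\{\beta_i\}$. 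Hence $\sum_i\beta_i\otimes\alpha_i$ also maps to the identity. Injectivity of the map then forces the two tensors to agree, which is \eqref{eq:weil-symmetry}.

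The only delicate point is the symmetry and well-definedness of $\beta$. Both rest on commutativity of $A/\f$ and on the fact that $\omega/f$ has poles of order at most $m_x$ at each $x\in\supp{\Df}$, which is what makes $\iota_{\omega_{\f}}$ land in $V_{\f}^\dagger$. If a second check is wanted, one can verify symmetry on $\inc(\WeilOp)$ using Lemma~\ref{lemma:split-casimir}. Locally one writes $\omega/f=\phi(u)\,du/u^{m}$ with $\phi$ a unit, and $\iota^{-1}$ turns $du/u^{k+1}$ into $u^{m-1-k}\phi^{-1}$. The identity $\sum_k u_\one^k\,u_\two^{m-1-k}\phi(u_\two)^{-1}$ modulo $(u_\one^m,u_\two^m)$ is then the divided difference of $f$, and divided differences are symmetric.
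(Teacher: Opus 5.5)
Your proof is correct and follows the paper's route. The paper also shows that $\iota_{\omega_{\f}}$ is self-adjoint for $\pair{\cdot}{\cdot}_{\f}$ using commutativity of $A/\f$, and then invokes transposition-invariance of the Casimir of a symmetric perfect pairing; you make that last step explicit via the dual-basis swap and injectivity of $V_{\f}\otimes V_{\f}\to\End(V_{\f})$. One small slip in your optional local check: the block $\sum_k u_\one^k u_\two^{m-1-k}\phi(u_\two)^{-1}$ is the truncated divided difference of $u^m/\phi = f\,du/\omega$, which is the divided difference of $f$ only when $\omega = du$ near the point, though its symmetry modulo $(u_\one^m,u_\two^m)$ holds regardless.
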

\begin{proof} 
The map $\iota_{\omega_{\f}}$ is self-adjoint with respect to the finite residue pairing:
\[
\pair{a}{\iota_{\omega_{\f}}(b)}_{\f} = \Res_{\Df} \left( a \cdot \frac{b \omega}{f} \right) = \Res_{\Df} \left( \frac{ab \omega}{f} \right) = \pair{b}{\iota_{\omega_{\f}}(a)}_{\f}.
\]
This uses commutativity of $A / \f$.

Now, the Casimir of a symmetric perfect pairing is invariant under transposition. More precisely, if we identify $V_{\f} \cong V_{\f}^{\dagger}$ via $\iota_{\omega_{\f}}$, then the pairing on $V_{\f}$ defined by $(a,b) \mapsto \pair{a}{\iota(b)}$ is symmetric. The Casimir tensor, when transferred to $V_{\f} \otimes V_{\f}$, must therefore be symmetric. Hence \eqref{eq:weil-symmetry} holds.
\end{proof}



\section{Examples I: The Projective Line}
\label{sec:ex-projective-line}

We now illustrate the general theory with concrete examples, beginning with the simplest case: the projective line $X = \Pp^1$, corresponding to the polynomial ring $A = \F_q[t]$.

\subsection{The Cauchy Kernel on $\Pp^1$}
\label{subsec:cauchy-kernel-p1}

Take $X = \Pp^1_{\F_q}$ with $\infty = [1:0]$, the usual point at infinity. Then $U = \mathbb{A}^1$ and $A = \F_q[t]$. Recall in Example~\ref{ex:szego}, the genus is $g = 0$, so acyclic line bundles have degree $g - 1 = -1$. A typical example is $\LL = \OO(-[P])$ for any $\F_q$-rational point $P$.

The normalized Szeg\H{o} kernel for $\LL = \OO(-[P])$ is given by the Cauchy kernel, twisted by a factor depending on $P$. Concretely, let $e = 1$ be the standard frame on the affine line (which is a frame for $\OO(-[P])$ away from $P$). Then
\[
S_{\LL} = \frac{c_P(\one, \two) \, dt_{\two}}{t_{\two} - t_\one} \, e(\one) \otimes e(\two)^{-1},
\]
where
\[
c_P(\one, \two) =
\begin{cases}
\dfrac{t_\one - \zeta}{t_{\two} - \zeta} & \text{if } P = [\zeta] \text{ is a finite point}, \\
1 & \text{if } P = \infty.
\end{cases}
\]

One verifies directly that this section has diagonal residue 1, and that it is regular away from the diagonal (with only the simple pole along $\DeltaDiag$). The factor $c_P$ cancels the pole that would otherwise appear at $t_{\two} = \zeta$.

For $f = t^N$, the exact truncation formula (Lemma \ref{lemma:pp-basic-properties}(I)) gives the principal part term by term:
\[
\PP_D(S_{\LL}^e) = \sum_{k=0}^{N-1} t_\one^k \otimes \frac{dt_{\two}}{t_{\two}^{k+1}}.
\]

As a further illustration of frame invariance, consider a frame change $e' = (1 + t) e$. Then the Szeg\H{o} kernel in the new frame is
\[
S_{\LL}^{e'} = \frac{1 + t_\one}{1 + t_{\two}} \cdot \frac{dt_{\two}}{t_{\two} - t_\one} + \text{regular}.
\]
The deviation from 1 is
\[
\left( \frac{1 + t_\one}{1 + t_{\two}} - 1 \right) \frac{dt_{\two}}{t_{\two} - t_\one} = \frac{(1 + t_\one) - (1 + t_{\two})}{1 + t_{\two}} \cdot \frac{dt_{\two}}{t_{\two} - t_\one} = - \frac{dt_{\two}}{1 + t_{\two}},
\]
which is regular in $t_{\two}$ along $D= N[0]$. Hence the principal part is unchanged, as predicted by Lemma \ref{lemma:divided-diff-regular}.

\subsection{A Non-Split Quadratic Modulus: The Trace Bridge}
\label{subsec:non-split-quadratic}

We now consider a modulus that is not split over $\F_q$. Let $q$ be odd, let $a \in \F_q$ be a non-square, and let $ f = t^2 - a$. Then the ideal $\f = (f)$ corresponds to a single closed point $x_\f$ of degree 2 on $U$. 

We take $\omega = dt$ as our global differential generator. Then $V_{\f}$ has basis $\{1, t\}$ and $V_{\f}^{\dagger}$ has basis $\{ dt/f, \, t \, dt/f \}$.

We compute the pairing matrix:
\[
\begin{aligned}
\pair{1}{dt/f}_{\f} &= \Res_{x_\f} (dt/f) = 0, \\
\pair{1}{t \, dt/f}_{\f} &= \Res_{x_\f} (t \, dt/f) = 1, \\
\pair{t}{dt/f}_{\f} &= \Res_{x_\f} (t \, dt/f) = 1, \\
\pair{t}{t \, dt/f}_{\f} &= \Res_{x_\f} (t^2 dt/f) = \Res_{x_\f} (a \, dt/f) = 0.
\end{aligned}
\]
Thus the pairing matrix is $\begin{pmatrix} 0 & 1 \\ 1 & 0 \end{pmatrix}$, which is invertible (determinant $-1$), confirming perfectness.

The dual basis to $\{1, t\}$ is $\{ t \, dt/f, \, dt/f \}$. Therefore the Casimir is
\[
\CasFin = 1 \otimes \left[ \frac{t \, dt}{f} \right] + t \otimes \left[ \frac{dt}{f} \right].
\]

Now base change to $\Cinf$. Over $\Cinf$, the polynomial $f$ splits: $f = (t - \zeta)(t + \zeta)$, where $\zeta^2 = a$. The divisor $D = x_\f \otimes \Cinf = [\zeta] + [-\zeta]$ consists of two $\Cinf$-rational points.

We decompose the differential classes in the split basis:
\[
\begin{aligned}
\left[ \frac{dt}{f} \right] &= \frac{1}{2\zeta} \left[ \frac{dt}{t - \zeta} \right] - \frac{1}{2\zeta} \left[ \frac{dt}{t + \zeta} \right], \\
\left[ \frac{t \, dt}{f} \right] &= \frac{1}{2} \left[ \frac{dt}{t - \zeta} \right] + \frac{1}{2} \left[ \frac{dt}{t + \zeta} \right].
\end{aligned}
\]

Substituting into the Casimir:
\[
\begin{aligned}
\inc(\CasFin) &= 1 \otimes \left( \frac{1}{2} \left[ \frac{dt}{t - \zeta} \right] + \frac{1}{2} \left[ \frac{dt}{t + \zeta} \right] \right) + t \otimes \left( \frac{1}{2\zeta} \left[ \frac{dt}{t - \zeta} \right] - \frac{1}{2\zeta} \left[ \frac{dt}{t + \zeta} \right] \right) \\
&= \left( 1 \cdot \frac{1}{2} + t \cdot \frac{1}{2\zeta} \right) \otimes \left[ \frac{dt}{t - \zeta} \right] + \left( 1 \cdot \frac{1}{2} - t \cdot \frac{1}{2\zeta} \right) \otimes \left[ \frac{dt}{t + \zeta} \right].
\end{aligned}
\]
At $t = \zeta$, the coefficient of the first term is $\frac{1}{2} + \frac{\zeta}{2\zeta} = 1$. At $t = -\zeta$, the coefficient of the second term is $\frac{1}{2} - \frac{-\zeta}{2\zeta} = 1$. Therefore
\[
\inc(\CasFin) = 1 \otimes \left[ \frac{dt}{t - \zeta} \right] + 1 \otimes \left[ \frac{dt}{t + \zeta} \right],
\]
which matches the split form from Lemma \ref{lemma:split-casimir} with $m = 1$ at each point. This verifies the base-change compatibility explicitly for a non-rational point.

The Weil operator in this case is
\[
\WeilOp = 1 \otimes t + t \otimes 1 = \frac{f(t_\one) - f(t_\two)}{t_\one - t_\two},
\]
which is indeed symmetric.

\subsection{Reminder Identity of Hu--Ou}
    In the polynomial case with $\omega = dt$, this recovers the classical divided-difference Weil operator:
\[
\WeilOp(t_\one, t_{\two}) = \frac{f(t_\one) - f(t_{\two})}{t_\one - t_{\two}}.
\]
The symmetric property of $\WeilOp(t_\one, t_{\two})$ confirms Theorem \ref{thm:symmetry-weil}.

Finally, we recover the identity of Hu--Ou.
For transcendental $\theta$,
\begin{equation}
\label{eq:huo-yang}
\left[ \frac{1}{\theta - t_{\two}} \right]_{\f} dt_{\two} = \frac{O_f(\theta, t_{\two})}{f(\theta)}dt_{\two} = - \tPP_D(S_{\LL}^e)|_{t_\one = \theta}.
\end{equation}

\section{Examples II: The Hu-Huang-Yau Ring}
\label{sec:ex-hhy}
\subsection{HHY Ring}
Our second example is the Hu-Huang-Yau (HHY) ring, which provides an explicit case where the module of differentials $\omegaA$ is \textbf{not} principal as an $A$-module. This demonstrates why our general theory must work with differential modules rather than just functions.

Let $\rho(t) \in \F_q[t]$ be a squarefree polynomial of degree $N \geq 2$. Take $X = \Pp^1_{\F_q}$, but now take the point at infinity to be the closed point $\infty_{\rho}$ defined by $\rho(t) = 0$. This is a point of degree $N$.

The affine complement $U = X \setminus \{\infty_{\rho}\}$ has coordinate ring
\[
A = \Gamma(U, \OO_X) \cong \F_q\left[ \frac{1}{\rho}, \frac{t}{\rho}, \dots, \frac{t^{N-1}}{\rho} \right].
\]
In other words, $A$ consists of all rational functions on $\Pp^1$ that are regular away from the zeros of $\rho$. It is generated by the elements
\[
T_i = \frac{t^i}{\rho(t)}, \quad 0 \leq i \leq N-1.
\]

The divisor class group of \(U\) is
\[
\operatorname{Pic}(U)
=
\operatorname{Cl}(U)
\cong
\frac{\operatorname{Cl}(\mathbb{P}^1_{\F_q})}{\langle [\infty_\rho]\rangle}
\cong
\frac{\mathbb{Z}}{N\mathbb{Z}}.
\]
Here \(\operatorname{Cl}(\mathbb{P}^1_{\F_q})\cong\mathbb{Z}\) is generated by
the class of the usual point at infinity \(\infty_t=[1:0]\).

On \(\mathbb{P}^1\), the canonical sheaf has class
\[
\Omega^1_{\mathbb{P}^1/\F_q}
\cong
\mathcal{O}_{\mathbb{P}^1}(-2[\infty_t]),
\]
so its restriction to \(U\) has class
\[
[\omegaA]
=
-2
\pmod{N}
\quad\text{in}\quad
\operatorname{Pic}(U)\cong\mathbb{Z}/N\mathbb{Z}.
\]
For a Dedekind domain \(A\), rank-one projective modules are classified by
\(\operatorname{Pic}(A)\cong\operatorname{Pic}(U)\), and the trivial class
corresponds precisely to free rank-one modules. Thus \(\omegaA\) is free if
and only if
\[
-2\equiv 0 \pmod{N},
\]
i.e. if and only if \(N\mid 2\). Therefore:

\begin{itemize}
\item for \(N=1,2\), the module \(\omegaA\) is principal;
\item for \(N\geq 3\), the class \([\omegaA]\in\mathbb{Z}/N\mathbb{Z}\) is
nontrivial, and hence \(\omegaA\) is \textbf{not} a free \(A\)-module.
\end{itemize}

In particular, for \(N\geq 3\) there is no global differential form that
generates \(\omegaA\) as an \(A\)-module.

Take the ideal $\f = (\rho^{-1}) \subseteq A$. Note that $\rho^{-1} \in A$ is not a unit: its inverse $\rho$ has an $N$-fold pole at $\infty_{\rho}$. The associated divisor is
\[
D = N \cdot [\infty_t],
\]
where $\infty_t = [1:0]$ is the usual point at infinity in the $t$-coordinate, which now lies in $U$. Let $u = 1/t$ be the local uniformizer at $\infty_t$. Then $V_{\f} \cong \F_q[u] / (u^N)$. We aim to describe the Casimir and the Weil operator $\WeilOp$ explicitly.

\subsection{The Case $N = 2$}
\label{subsec:hhy-n2}

For $N = 2$, with $\rho = t^2 - a$, the global differential $dt / \rho$ is a local generator at $\infty_t$. In terms of the uniformizer $u = 1/t$, we have
\[
\frac{dt}{\rho} = \frac{-u^{-2} du}{u^{-2} - a} = \frac{-du}{1 - a u^2},
\]
which is regular and nonvanishing at $u = 0$. So $\omegaA$ is principal in this case.

Taking the local generator $\omega = du$, the differential quotient module is generated by $du / u^2$. The Casimir is
\[
\CasFin = 1 \otimes \left[ \frac{du}{u} \right] + u \otimes \left[ \frac{du}{u^2} \right] = (u_\one + u_\two)\frac{1}{u_\two^2} d u_\two.
\]
The Weil operator is
\[
\WeilOp = 1 \otimes u + u \otimes 1 = u_\one + u_\two,
\]
which is the truncated divided difference at order two. The explicit formula for a general modulus $\f$ was given in \cite{HH24}.

\subsection{The Case \(N \geq 3\): Non-Principal Differentials}
\label{subsec:hhy-n3}

For \(N \geq 3\), the situation changes qualitatively: the differential module
\(\omegaA\) is no longer free as an \(A\)-module. This is the
phenomenon that forces the general theory to work with differential modules
rather than with a fixed global differential form.

A local generator such as
\(\omega=du\) near \(D=N[\infty_t]\) still exists, but it does not extend to
a global generator on all of \(U\). 
For the specific ideal \(\f=(\rho^{-1})\), the support of \(D\) consists of the
single point \(\infty_t\). Consequently, once a local differential generator is
chosen near \(\infty_t\), the Weil operator reduces to a single truncated
divided-difference block.

We now compute the Casimir tensor and the resulting Weil operator explicitly
for the ideal \(\f=(\rho^{-1})\). The support of the divisor \(D\) is the
single point \(\infty_t\), with multiplicity \(N\). Let \(u=1/t\) be a local
uniformizer at \(\infty_t\). Then locally
\[
V_{\f}\cong \F_q[u]/(u^N),
\qquad
V_{\f}^{\dagger}\cong \F_q[u]du/(u^N du).
\]
The residue pairing between \(V_{\f}\) and \(V_{\f}^{\dagger}\) is perfect,
and the bases
\[
\{u^k\}_{0\le k<N}
\qquad\text{and}\qquad
\left\{\frac{du}{u^{k+1}}\right\}_{0\le k<N}
\]
are dual. Hence the finite residue Casimir tensor is
\begin{equation}
\label{eq:hhy-casimir-N}
\CasFin
=
\sum_{k=0}^{N-1} u_\one^k \otimes \frac{du_\two}{u_\two^{k+1}}.
\end{equation}

To define the Weil operator, choose the local differential generator
\(\omega=du\) near \(\infty_t\). Since \(f=\rho^{-1}\), we have
\[
\frac{\omega}{f}=\rho\,du.
\]
Assume for simplicity that \(\rho(t)=t^N-a\) with \(a\in\F_q^\times\), so
that in the \(u\)-coordinate
\[
\rho(1/u)=u^{-N}-a.
\]
Then the multiplication map
\[
\iota_{\omega_{\f}}:V_{\f}\to V_{\f}^{\dagger},
\qquad
\iota_{\omega_{\f}}(g)=g\,\rho\,du \pmod{\omegaA},
\]
acts on the basis \(\{u^k\}\) by
\[
\iota_{\omega_{\f}}(u^k)
=
u^k(u^{-N}-a)\,du
\equiv
u^{k-N}\,du
\pmod{\omegaA}.
\]
Here we used that \(a u^k\,du\) is a regular differential for \(k\ge0\).
Thus \(\iota_{\omega_{\f}}\) sends
\[
u^k \longmapsto u^{k-N}\,du,
\]
which is a bijection between the bases
\[
\{u^k\}_{0\le k<N}
\quad\text{and}\quad
\{u^{-j}\,du\}_{1\le j\le N}.
\]
Its inverse is therefore
\[
\iota_{\omega_{\f}}^{-1}\left(\frac{du}{u^{k+1}}\right)
=
u^{N-1-k}.
\]
Applying \(\id\otimes \iota_{\omega_{\f}}^{-1}\) to the Casimir tensor
\eqref{eq:hhy-casimir-N}, we obtain the rank-two Weil operator
\begin{equation}
\label{eq:hhy-weil-N}
\WeilOp
=
\sum_{k=0}^{N-1} u_\one^k \otimes u_\two^{N-1-k}.
\end{equation}
This is exactly the truncated divided-difference kernel of order \(N\). Note
that in this case there is only one point in the support of \(D\), so no
Chinese-remainder assembly is needed.

\subsection{Explicit Szeg\H{o} Kernels}
\label{subsec:hhy-explicit-szego}

We now verify the main theorem explicitly for this example by deriving the Szeg\H{o} kernel in the local coordinate $u = 1/t$ and computing its principal part along $D = N[\infty_t]$. We will see that although the full Szeg\H{o} kernel depends on the choice of root $\zeta_i$, its principal part along $D$ does not.

For each root $\zeta_i$ of $\rho$, define the M\"obius coordinate
\[
\lambda_i = \frac{1}{t - \zeta_i}.
\]
This coordinate has the property that it sends the point $t = \zeta_i$ to $\lambda_i = \infty$, and sends the point $\infty_t$ (where $t = \infty$, i.e., $u = 0$) to $\lambda_i = 0$.

Under this coordinate change, the line bundle $\LL_i = \OO(-[\zeta_i])$ becomes $\OO(-[\infty])$ in the $\lambda_i$-chart, which is trivial on the affine open set $\lambda_i \neq \infty$. Let $e_i = 1$ denote the natural trivializing frame on this chart. By the general construction of the Szeg\H{o} kernel on the projective line (Section \ref{subsec:cauchy-kernel-p1}), the normalized Szeg\H{o} kernel in the $\lambda_i$-coordinate is the standard Cauchy kernel:
\[
S_{\LL_i} = \frac{d\lambda_{i, \two}}{\lambda_{i, \two} - \lambda_{i, \one}} \, e_i(\one) \otimes e_i(\two)^{-1}.
\]

We now rewrite this kernel in terms of the original $t$-coordinate, and then in terms of the local uniformizer $u = 1/t$ at $\infty_t$.

First, express $\lambda_i$ as a function of $u$. Since $t = 1/u$, we have
\[
t - \zeta_i = \frac{1}{u} - \zeta_i = \frac{1 - \zeta_i u}{u},
\]
so
\[
\lambda_i = \frac{u}{1 - \zeta_i u}.
\]

Next, compute the differential $d\lambda_i$ by the quotient rule:
\[
d\lambda_i = d\left( \frac{u}{1 - \zeta_i u} \right) = \frac{du \cdot (1 - \zeta_i u) - u \cdot (-\zeta_i du)}{(1 - \zeta_i u)^2} = \frac{du}{(1 - \zeta_i u)^2}.
\]



Therefore, the Szeg\H{o} kernel in the $u$-coordinate is
\begin{equation}\label{eq:Szego_u}
S_{\LL_i} = \frac{1 - \zeta_i u_\one}{1 - \zeta_i u_{\two}} \cdot \frac{du_{\two}}{u_{\two} - u_\one} \, e_i(\one) \otimes e_i(\two)^{-1}.
\end{equation}

\subsection{principal Part of Szeg\H{o} Kernel}
To compute the principal part of $ S_{\LL_i} $ explicitly, we split the expression \eqref{eq:Szego_u} into two terms by rewriting the prefactor:
\[
\frac{1 - \zeta_i u_\one}{1 - \zeta_i u_{\two}} = 1 + \frac{(1 - \zeta_i u_\one) - (1 - \zeta_i u_{\two})}{1 - \zeta_i u_{\two}} = 1 + \frac{\zeta_i (u_{\two} - u_\one)}{1 - \zeta_i u_{\two}}.
\]

Substituting back, the kernel becomes
\[
\begin{aligned}
S_{\LL_i}
&= \left( 1 + \frac{\zeta_i (u_{\two} - u_\one)}{1 - \zeta_i u_{\two}} \right) \cdot \frac{du_{\two}}{u_{\two} - u_\one} \, e_i(\one) \otimes e_i(\two)^{-1} \\
&= \frac{du_{\two}}{u_{\two} - u_\one} \, e_i(\one) \otimes e_i(\two)^{-1} + \frac{\zeta_i \, du_{\two}}{1 - \zeta_i u_{\two}} \, e_i(\one) \otimes e_i(\two)^{-1}.
\end{aligned}
\]
The first term is the standard singular Cauchy kernel. The second term is \textbf{holomorphic in the second variable at $u_{\two} = 0$}, because the denominator $1 - \zeta_i u_{\two}$ evaluates to $1$ at $u_{\two} = 0$ and is nonvanishing on a neighbourhood of $0$.

By the regularity property of the principal part (Lemma \ref{lemma:pp-basic-properties}(iii)), any section that is holomorphic in the second variable along $D$ has zero principal part. Therefore the second term contributes nothing to $\PP_D(S_{\LL_i})$, and we are left with
\[
\PP_D(S_{\LL_i}) = \PP_D\left( \frac{du_{\two}}{u_{\two} - u_\one} \, e_i(\one) \otimes e_i(\two)^{-1} \right).
\]

Applying the exact truncation formula \eqref{eq:exact-truncation} to the Cauchy kernel, we obtain
\[
\PP_D(S_{\LL_i}) = \left( \sum_{k=0}^{N-1} u_\one^k \otimes \frac{du_{\two}}{u_{\two}^{k+1}} \right) \cdot \left( e_i(\one) \otimes e_i(\two)^{-1} \right).
\]

Applying the frame identification $\Upsilon_{e_i}^D$ to strip off the frame tensor yields
\[
\Upsilon_{e_i}^D\left( \PP_D(S_{\LL_i}) \right) = \sum_{k=0}^{N-1} u_\one^k \otimes \frac{du_{\two}}{u_{\two}^{k+1}} = \inc(\CasFin).
\]

Crucially, this expression is completely independent of the index $i$. Although the full Szeg\H{o} kernel $S_{\LL_i}$ depends on the choice of root $\zeta_i$, the $\zeta_i$-dependent terms are all holomorphic along $D$ and are killed in the principal part quotient. This provides an explicit verification of Proposition \ref{prop:frame-coeff-pp}: every acyclic line bundle yields the same principal part, equal to the scalar extension of the finite residue Casimir.

\subsection{Comparison with the principal part.}
In Section~\ref{subsec:hhy-explicit-szego} we have computed the principal part of the Szeg\H{o} kernel explicitly and find
\[
\PP_D(S_{\LL_i})
=
\sum_{k=0}^{N-1} u_\one^k \otimes \frac{du_\two}{u_\two^{k+1}}
=
\inc(\CasFin),
\]
which matches \eqref{eq:hhy-casimir-N}. Consequently,
\[
\PP_D(S_{\LL_i})
=
\frac{\WeilOp}{f(\two)}\,\omega(\two)
\]
with \(f=\rho^{-1}\) and \(\omega=du\), in accordance with the general
relation between principal parts and Weil operators. This direct comparison
provides an explicit verification of the main theorem in the case where the
differential module is not free.

\section{Example III: Elliptic Curves}
\label{sec:ex-elliptic}

Our final example is an elliptic curve, which demonstrates the theory in genus 1. In this case the canonical bundle is trivial, so $\omegaA$ is principal, but the curve is no longer rational, so the Szeg\H{o} kernel is not simply the Cauchy kernel.

Let $X$ be an elliptic curve over $\F_q$ given by the affine Weierstrass equation
\[
y^2 + a_1 x y + a_3 y = x^3 + a_2 x^2 + a_4 x + a_6.
\]
Take $\infty$ to be the origin (point at infinity), so $U = X \setminus \{\infty\}$ is the affine curve. The coefficient ring $A = \F_q[x, y] / (P(x,y))$ has a canonical invariant differential
\[
\omega_X = \frac{dx}{2y + a_1 x + a_3}.
\]
The module $\omegaA = A \cdot \omega_X$ is free of rank one, i.e., principal, because the canonical bundle of an elliptic curve is trivial.

Now let $V = (\alpha, \beta) \neq \infty$ be a closed point on $X_{\Cinf}$. Define the line bundle
\[
\LL_V = \OO(V - \infty).
\]
This has degree 0. Since $g = 1$, acyclic bundles have degree $g - 1 = 0$. The bundle $\LL_V$ is acyclic if and only if $V \neq \infty$.

The normalized Szeg\H{o} kernel for $\LL_V$ is given explicitly by
\[
S_{\LL_V}(\one,\two) = - \left[ C_X(\one, \two) - C_X(\one, V) \right] \omega_X(\two) \cdot \left( e(\one) \otimes e(\two)^{-1} \right),
\]
where
\[
C_X(\one, \two) = \frac{y_\one + y_{\two} + a_1 x_{\two} + a_3}{x_\one - x_{\two}}
\]
is the primitive Cauchy kernel with respect to the coordinate function $x$, and
\[
C_X(\one, V) = C_X(\one, \two) \mid_{\two = V} = \frac{y_\one + \beta + a_1 \alpha + a_3}{x_\one - \alpha}.
\]

Let us verify the key properties:

\begin{enumerate}
\item \textbf{Diagonal residue.} The term $C_X(\one, \two)$ has a simple pole along the diagonal $x_\one = x_{\two}$ with residue 1 (in the sense of the $x$-coordinate). The term $C_X(\one, V)$ is regular along the diagonal. The factor $\omega_X(\two)$ converts the residue in $x$ to the residue of the differential, giving $\res_{\DeltaDiag}(S_{\LL_V}) = 1$.

\item \textbf{Regularity elsewhere.} 
The function $ x_\one - x_\two $ has two simple poles, located at the diagonal $ \one = \two $, and the anti-diagonal (where $\two$ equals the conjugate point of $ \one $).
On the anti-diagonal, the numerator $y_\one + y_{\two} + a_1 x_{\two} + a_3$ vanishes by the standard identity for Weierstrass curves, so $C_X(\one, \two)$ is actually regular there. Thus the only pole of $S_{\LL_V}$ is along the diagonal.

\item \textbf{Values at infinity.} The kernel has the correct behavior at infinity, because the subtraction of $C_X(\one, V)$ cancels the pole that would otherwise appear at $\two = \infty$.
\end{enumerate}

Now consider a finite divisor $D$ whose support avoids both $V$ and $\infty$. Then $C_X(\one, V)$ is regular in the second variable along $D$, so it contributes nothing to the principal part. Therefore
\[
\PP_D(S_{\LL_V}) = - \PP_D\left( C_X(\one, \two) \, \omega_X(\two) \right).
\]
The reproducing property holds:
\[
\res_{\two=\one}^{\two} \left( - C_X \, \tilde{\alpha} \, \omega_X \right) = \tilde{\alpha}(\one)
\]
for any meromorphic function $\tilde{\alpha}$. By the same argument as in Theorem \ref{thm:reproducing-property}, this implies that $\PP_D(S_{\LL_V})$ acts as the identity on sections restricted to $D$. Therefore
\[
\Upsilon_e(\PP_D(S_{\LL_V})) = \inc(\CasFin),
\]
in accordance with the main theorem \eqref{eq:main-theorem}.

This example demonstrates that our theory extends naturally and seamlessly to higher‑genus curves. Although the Szeg\H{o} kernel is no longer a simple rational function, its principal part still realizes the universal Casimir tensor. Notably, the function \(C_X(\one,\two)\) corresponds to the \(G_u\)-function, and \(C_X(\one,\two)-C_X(\one,V)\) corresponds to the \(J_u\)-function that appears in the construction of Anderson generating functions on elliptic curves \cite{GP18}. 
We will explore this connection in greater depth in forthcoming work.

\bibliographystyle{amsplain}
 \bibliography{paper}

\end{document}